\def\3{\ss{}}
\newtheorem{theorem}{Theorem}[section]
\newtheorem*{theorem*}{Theorem}
\newtheorem*{corollary*}{Corollary}
\newtheorem{lemma}{Lemma}[section]
\newtheorem{proposition}{Proposition}[section]
\newtheorem{corollary}{Corollary}[section]
\newtheorem{conjecture}{Conjecture}[section]
\theoremstyle{definition}
\newtheorem{definition}{Definition}[section]
\theoremstyle{remark}
\newtheorem{remark}{Remark}[section]
\newtheorem{notation}{Notation}[section]
\DeclareMathOperator{\e}{\mathrm{e}}         
\newcommand{\tO}{\mathtt 0}                  
\newcommand{\tL}{\mathtt 1}                  
\newcommand{\rcp}[1]{\frac{1}{#1}}
\newcommand{\bl}{\begin{lemma}}
\newcommand{\el}{\end{lemma}}
\newcommand{\bp}{\begin{proposition}}
\newcommand{\ep}{\end{proposition}}
\newcommand{\bdf}{\begin{definition}}
\newcommand{\edf}{\end{definition}}
\newcommand{\bcor}{\begin{corollary}}
\newcommand{\ecor}{\end{corollary}}
\newcommand{\bpf}{\begin{pf}}
\newcommand{\epf}{\end{pf}}
\newcommand{\ZZ}{{\bf Z}}
\newcommand{\LandauO}{\mathcal O}
\newcommand{\eqdef}{\coloneqq}
\newcommand{\smallspace}{\hspace{0.75pt}} 
\newcommand{\Mid}{\mid\hspace{-0.8pt}\mid}   
\def\digitsum{\mathsf s}
\newcommand{\nO}{n_{\mathsf 0}}
\newcommand{\nL}{n_{\mathsf 1}}
\newcommand{\nLO}{n_{\mathsf {10}} }
\newcommand{\nLL}{n_{\mathsf {11}} }
\newcommand{\Mzero}{M_{\mathsf 0}}
\newcommand{\Mtwo}{M_{\mathsf 2}}
\newcommand{\Mthree}{M_{\mathsf 3}}
\newcommand{\Mfour}{M_{\mathsf 4}}
\newcommand{\Mfive}{M_{\mathsf 5}}
\newcommand{\Msix}{M_{\mathsf 6}}
\newcommand{\Ezero}{E_{\mathsf 0}}
\newcommand{\Eone}{E_{\mathsf 1}}
\newcommand{\Etwo}{E_{\mathsf 3}}
\newcommand{\Ethree}{E_{\mathsf 4}}
\newcommand{\Efour}{E_{\mathsf 2}}
\newcommand{\Kzero}{K_{\mathsf 0}}
\newcommand{\Kone}{K_{\mathsf 1}}
\newcommand{\Sone}{S_{\mathsf 1}}
\newcommand{\rzero}{r_{\mathsf 0}}
\newcommand{\factor}{\mathfrak m}
\newcommand{\biglbb}{\bigl[\hspace{-0.3em}\bigl[}
\newcommand{\bigrbb}{\bigr]\hspace{-0.3em}\bigr]}
\numberwithin{equation}{section}
\title[The joint distribution of binary and ternary digit sums]
{The joint distribution of binary and ternary digits sums}
\thanks{$^1$This research was supported by the Austrian Science Foundation 
FWF, grant P36137-N, by the FWF--ANR joint projects ArithRand (grant numbers I4945-N and ANR-20-CE91-0006) and SymDynAr (grant numbers 
I6750 and ANR-23-CE40-0024-01).
}
\author[Michael Drmota]{Michael Drmota${}^{*}$} 
\thanks{${}^{*}$Institute of Discrete Mathematics and Geometry, Technische 
Universit\"at Wien, Wiedner Hauptstra\3e 8-10/113, A-1040 Wien, Austria} 
\author[Lukas Spiegelhofer]{Lukas Spiegelhofer${}^{**}$} 
\thanks{${}^{**}$Department Mathematics and Information Technology, Montanuniversit\"at Leoben, Austria}
\subjclass{Primary: 11A63, Secondary: 11N60}\date{\today}
\begin{document}

\begin{abstract}
We consider the sum-of-digits functions $s_2$ and $s_3$ in bases $2$ and $3$.
These functions just return the minimal numbers of powers of two (resp. three) needed in order to represent a nonnegative integer as their sum.
A result of the second author states that there are infinitely many \emph{collisions} of $s_2$ and $s_3$, that is, positive integers $n$ such that
\[s_2(n)=s_3(n).\]
This resolved a long-standing folklore conjecture.

In the present paper, we prove a strong generalization of this statement,
stating that $(s_2(n),s_3(n))$ attains almost all values in $\mathbb N^2$, in the sense of asymptotic density.
In particular, this yields \emph{generalized collisions}:
for any pair $(a,b)$ of positive integers, the equation
\[a\hspace{0.5pt}s_2(n)=b\hspace{0.5pt}s_3(n)\]
admits infinitely many solutions in $n$.
\end{abstract}

\maketitle

\def\({\left(}
\def\){\right)}

\section{Introduction and main result}\label{sec_introduction}

The number $\digitsum_q(n)$, for natural numbers $q\ge2$ and $n$, is the sum of the base-$q$ digits of $n$.
Since the base-$q$ expansion of $n$ can be found by the greedy algorithm, it is the \emph{lexicographically largest} representation of $n$ as sum of powers of $q$.
Using this, it is not difficult to show that $\digitsum_q(n)$ is the minimal number of powers of $q$ needed to represent $n$ as their sum:
\[
\digitsum_q(n)=\min\bigl\{k\ge0:\mbox{there exist }d_0,\ldots,d_{k-1}\in\mathbb N\mbox{ such that }n=q^{d_0}+\cdots+q^{d_{k-1}}\bigr\}.
\]
In the easiest case, the values of $\digitsum_2(n)$, as $n$ varies in $[2^\lambda,2^{\lambda+1})$, are distributed according to a binomial distribution with parameters $(1/2,\lambda)$.
It is not surprising that in general, the values of $\digitsum_q(n)$ are asymptotically normally distributed~\cite{DrmotaGajdosik1998,Katai1992}.
For example, the sum-of-digits function can be modeled by a sum of i.i.d. random variables on $\{0,\ldots,q-1\}$~\cite{Drmota2001}, from which the statement follows.

In the present paper, we consider the sum-of-digits function with respect to different bases $p$ and $q$ simultaneously.
The corresponding normal distributions concentrate around values that are many standard deviations apart~\cite{DeshouillersHabsiegerLaishramLandreau2017, Spiegelhofer2023collisions}.
Finding integers $n$ such that $\digitsum_2(n)-\digitsum_3(n)$ is small
may therefore be expected to be a non-trivial problem.

Towards the end of the last century, the first author received a hand-written letter from A.~Hildebrand, in which the following question was asked.
\begin{equation}\label{eqn_hildebrand_question}
\mbox{
Are there infinitely many positive integers $n$ such that $\digitsum_2(n)=\digitsum_3(n)$?
}
\end{equation}
A natural number $n$ such that $\digitsum_p(n)=\digitsum_q(n)$
will be called \emph{collision} of $\digitsum_p$ and $\digitsum_q$, or $(p,q)$-collision, in this paper.

The first author~\cite{Drmota2001} proved a statement on the joint distribution of $\digitsum_2(n)$ and $\digitsum_3(n)$,
using among others Baker's theorem on linear forms of logarithms.
A corollary of the main result (Corollary~2~\cite{Drmota2001} states the following.
\begin{corollary*}
Let $p,q>1$ be coprime integers. As $N\rightarrow\infty$, we have
\[\frac1N\#
\biggl\{
n<N:\frac{\digitsum_{p}(n)-(p-1)\log_{p}(N)/2}{\sqrt{(p^2-1)\log_{p}(N)/12}} <y_1, \, 
\frac{\digitsum_{q}(n)-(q-1)\log_{q}(N)/2}{\sqrt{(q^2-1)\log_{q}(N)/12}} <y_2
\biggr\}
\rightarrow
\Phi(y_1)\Phi(y_2).
\]
\end{corollary*}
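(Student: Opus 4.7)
The plan is to prove the joint central limit theorem via Lévy's continuity theorem applied to the renormalized pair
\[
X_N := \frac{\digitsum_p(n) - (p-1)\log_p(N)/2}{\sqrt{(p^2-1)\log_p(N)/12}}, \qquad Y_N := \frac{\digitsum_q(n) - (q-1)\log_q(N)/2}{\sqrt{(q^2-1)\log_q(N)/12}},
\]
with $n$ drawn uniformly from $\{0,\dots,N-1\}$. The target is convergence of the joint characteristic function to $\exp(-t_1^2/2)\exp(-t_2^2/2)$, which by a change of variables reduces to estimating the exponential sum
\[
T_N(\alpha,\beta) := \sum_{n<N} \exp\bigl(2\pi i (\alpha\, \digitsum_p(n) + \beta\, \digitsum_q(n))\bigr)
\]
at $\alpha,\beta$ of size $\mathcal O(1/\sqrt{\log N})$, and showing that this sum factors asymptotically as $N^{-1}\, T_N(\alpha,0)\, T_N(0,\beta)$.

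For each marginal (say $\beta=0$), factorization over base-$p$ digits yields
\[
\sum_{n<p^\lambda}\exp\bigl(2\pi i \alpha\, \digitsum_p(n)\bigr) = \biggl(\sum_{d=0}^{p-1}\exp(2\pi i\alpha d)\biggr)^{\!\lambda},
\]
and a Taylor expansion of the inner character sum produces the Gaussian factor $\exp(-t_1^2/2)$ after normalization. For the joint sum $T_N(\alpha,\beta)$ no such factorization is directly available, since base-$p$ and base-$q$ digits both refer to the same $n$. I would split $n = M\, p^{\lambda_0} + r$ with $0 \le r < p^{\lambda_0}$ at a cutoff $\lambda_0$ so that $\digitsum_p(n) = \digitsum_p(M) + \digitsum_p(r)$ cleanly, and then analyze the base-$q$ digit sum of $M p^{\lambda_0}+r$ by letting $r$ range over the short interval. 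The inner sum over $r$ concentrates once one controls the fractional parts $\{M p^{\lambda_0}/q^j\}$ as $M$ varies, which is where coprimality is used: Baker's theorem on linear forms in logarithms provides a bound of the form $|m\log p - k\log q| \gg \max(m,k)^{-C}$, and this translates into effective equidistribution of these fractional parts, uniformly in the relevant scales, forcing the cross-correlation in $T_N(\alpha,\beta)$ to factor asymptotically.

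The main obstacle will be uniformity in the Fourier variables at scale $1/\sqrt{\log N}$: one must show $T_N(\alpha,\beta) = (1+o(1))\,N^{-1}\,T_N(\alpha,0)\, T_N(0,\beta)$ with an error strong enough to survive the amplification implicit in Lévy's theorem. Since the Baker-type savings are only polynomial in $\log N$, this demands careful bookkeeping of constants and a judicious choice of $\lambda_0$, possibly combined with a smoothing argument on the short range in $r$ to sharpen the equidistribution input. Once this quantitative factorization is secured, the two-dimensional Lévy theorem immediately yields the product Gaussian limit $\Phi(y_1)\Phi(y_2)$ stated in the corollary.
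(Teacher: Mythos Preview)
Your high-level architecture is sound: L\'evy's continuity theorem reduces the problem to the joint characteristic function, and Baker's theorem on linear forms in logarithms is indeed the decisive arithmetic input for coprime bases. Both ingredients appear in the paper's treatment (see Lemma~\ref{CorX} and Section~\ref{sec:proofPro2}). However, the specific mechanism you propose for achieving the factorization $T_N(\alpha,\beta)\approx N^{-1}T_N(\alpha,0)T_N(0,\beta)$ has a gap, and the paper (following Drmota~2001) proceeds by a genuinely different route.

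The gap is in your splitting $n=Mp^{\lambda_0}+r$. This cleanly separates $\digitsum_p(n)=\digitsum_p(M)+\digitsum_p(r)$, but it does \emph{not} decouple the base-$q$ side: $\digitsum_q(Mp^{\lambda_0}+r)$ is not close to $\digitsum_q(Mp^{\lambda_0})+\digitsum_q(r)$ in any usable sense, because adding $r$ can trigger long carry chains in base~$q$. Your assertion that ``the inner sum over $r$ concentrates once one controls the fractional parts $\{Mp^{\lambda_0}/q^j\}$'' is where the argument would have to do real work, and as stated it is not clear how concentration of these fractional parts yields the claimed factorization of the full characteristic function. Controlling a single $q^j$ via Baker is fine, but $\digitsum_q$ involves all $j$ simultaneously, and the uniformity you need across scales is not automatic from one-scale equidistribution.

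The paper's approach sidesteps this entirely. Rather than attempting a direct factorization of $T_N(\alpha,\beta)$, it first establishes \emph{digit-level} asymptotic independence: for any finite collection of digit positions $k_1<\cdots<k_{d_1}$ in base~$p$ and $\ell_1<\cdots<\ell_{d_2}$ in base~$q$ (restricted to a central range $[N_i^\eta,N_i-N_i^\eta]$ to make Baker effective), the joint distribution of these digits over $n<N$ is asymptotically uniform on $\{0,\ldots,p-1\}^{d_1}\times\{0,\ldots,q-1\}^{d_2}$, with an error $O((\log N)^{-\lambda})$ for any $\lambda$ (Lemma~\ref{LeKey}). This is proved by Fourier analysis together with Baker's bound applied to expressions of the form $\sum h_j p^{-k_j}+\sum r_j q^{-\ell_j}$ (Lemma~\ref{CorX}, Lemma~\ref{Leexponentialsums}). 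From digit independence one passes to the CLT via the \emph{method of moments} (Lemma~\ref{Lecomparemoments}), comparing the truncated digit sums to genuine sums of i.i.d.\ variables; convergence of all joint moments then gives weak convergence, and hence the characteristic-function statement. The truncation of digit positions to the central range is precisely the device that absorbs the polylogarithmic losses from Baker that you correctly flag as the main obstacle.
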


The statement of a \emph{local version} of this result~\cite[Theorem~4]{Drmota2001} is at the core of our main theorem (Theorem~\ref{thm_main} below).
\begin{theorem*}[Drmota 2001]
Let $p,q>1$ be coprime integers, and $d=\gcd(p-1,q-1)$.
As $N\rightarrow\infty$, we have uniformly for all integers $k_1,k_2\ge0$ such that $k_1\equiv k_2\bmod d$,
\[
\begin{aligned}
\hspace{2em}&\hspace{-2em}
\frac1N\#\bigl\{n<N:\digitsum_{p}(n)=k_1,\digitsum_{q}(n)=k_2\bigr\}\\
&=
d 
\frac1{\sqrt{2\pi(p^2-1)\log_{p}(N)/12}}
\exp\biggl(-\frac{(k_\ell-(p-1)\log_{p}(N)/2)^2}
{2(p^2-1)\log_{p}(N)/12}\biggr) \\
&\times 
\frac1{\sqrt{2\pi(q^2-1)\log_{q}(N)/12}}
\exp\biggl(-\frac{(k_\ell-(q-1)\log_{q}(N)/2)^2}
{2(q^2-1)\log_{q}(N)/12}\biggr)
+o\bigl((\log N)^{-1}\bigr).
\end{aligned}
\]
\end{theorem*}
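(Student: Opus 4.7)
The natural device is Fourier inversion applied to the joint characteristic function
\[
S_N(t_1,t_2)\eqdef\sum_{n<N}\exp\bigl(2\pi i\bigl(t_1\digitsum_p(n)+t_2\digitsum_q(n)\bigr)\bigr),
\]
which produces the exact identity
\[
\#\bigl\{n<N:\digitsum_p(n)=k_1,\,\digitsum_q(n)=k_2\bigr\}
  =\int_0^1\!\int_0^1\exp\bigl(-2\pi i(t_1 k_1+t_2 k_2)\bigr)\,S_N(t_1,t_2)\,dt_1\,dt_2.
\]
The first observation is that $|S_N|$ attains its maximum value $N$ at (at least) $d$ points in $[0,1]^2$: the congruences $\digitsum_p(n)\equiv n\equiv\digitsum_q(n)\pmod d$ show that the exponent is unchanged under the shift $(t_1,t_2)\mapsto(t_1+j/d,\,t_2-j/d)$, and in particular $S_N(j/d,-j/d)=N$ for $j=0,\ldots,d-1$.

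The plan is to decompose $[0,1]^2$ into $d$ small boxes $B_j$ of side slightly larger than $(\log N)^{-1/2}$ centred at the peaks $(j/d,-j/d)$, together with a complementary minor-arc region $V$. On each $B_j$, the invariance above reduces matters to a neighbourhood of the origin, where I would perform a joint saddle-point expansion of $\log\bigl(N^{-1}S_N\bigr)$. Because the digits of a uniformly distributed $n<N$ behave like bounded random variables, this is a routine Taylor expansion: the quadratic term is controlled by the means $\mu_p=(p-1)\log_p N/2$, $\mu_q=(q-1)\log_q N/2$ and the variances $\sigma_p^2=(p^2-1)\log_p N/12$, $\sigma_q^2=(q^2-1)\log_q N/12$, while the mixed second moment turns out to be $o(\log N)$, so no cross term survives. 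This yields
\[
S_N(u_1,u_2)=N\exp\Bigl(2\pi i(\mu_p u_1+\mu_q u_2)-2\pi^2(\sigma_p^2 u_1^2+\sigma_q^2 u_2^2)\Bigr)\bigl(1+o(1)\bigr)
\]
uniformly for $\|(u_1,u_2)\|\le(\log N)^{-1/2+\varepsilon}$. Integrating against $\exp(-2\pi i(k_1 t_1+k_2 t_2))$ over $B_j$, and extending to $\mathbb R^2$ at negligible cost (Gaussian tails), produces one copy of the Gaussian product in the statement, multiplied by the shift phase $\exp(-2\pi i j(k_1-k_2)/d)$. Summing over $j=0,\ldots,d-1$ gives the factor $d$ precisely under the congruence $k_1\equiv k_2\pmod d$ (and $0$ otherwise, consistently with the fact that no such $n$ exists), which simultaneously explains the prefactor and the congruence restriction.

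The genuine obstacle is the minor-arc estimate
\[
\sup_{(t_1,t_2)\in V}\lvert S_N(t_1,t_2)\rvert\ll N(\log N)^{-A}
\]
for every fixed $A>0$. Unlike the single-base situation, $S_N$ does not factor over digit positions, since the base-$p$ and base-$q$ expansions of a given integer are coupled in a highly non-trivial way, and no elementary sine-product estimate is available. This is the classical setting in which Baker's theorem on linear forms in logarithms is indispensable, following Gel'fond's scheme: through digit-by-digit recursions combined with van~der~Corput-type iterations, the minor-arc bound can be reduced to an effective lower bound $\lvert a\log p+b\log q\rvert\gg H^{-C}$ for non-trivial integer pairs of height $H$, which Baker provides. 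Once this uniform bound is in hand, the contribution of $V$ to the Fourier integral is $O((\log N)^{-A})$, negligible compared to the $(\log N)^{-1}$ scale of the main term, and the proof is complete.
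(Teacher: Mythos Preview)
Your overall architecture---Fourier inversion, identification of the $d$ peaks via the congruence $\digitsum_p(n)\equiv n\equiv\digitsum_q(n)\pmod d$, and a major/minor arc decomposition---is correct and matches the framework of Drmota~2001 as mirrored in this paper. But you have located the real difficulty in the wrong place, and this leaves a genuine gap.

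The step you call ``routine'' is the hard one. You assert that ``the mixed second moment turns out to be $o(\log N)$, so no cross term survives,'' and then pass directly to the product-Gaussian asymptotic for $S_N$ near the origin. That vanishing covariance is not a Taylor-expansion triviality: it encodes the asymptotic independence of the base-$p$ and base-$q$ digit structures of $n$, and it is precisely here that Baker's theorem is required. In the argument actually carried out (see Section~5 of the present paper, which follows Drmota~2001), one first proves that any fixed finite collection of base-$p$ digits and base-$q$ digits of a uniform $n<N$ is jointly equidistributed (Lemma~\ref{LeKey}). This reduces, via Erd\H os--Tur\'an, to bounding sums $\sum_{n<N}\e(\alpha n)$ with $\alpha$ a combination of terms $h_jp^{-k_j}$ and $r_iq^{-\ell_i}$; the needed lower bound on $\lVert\alpha\rVert$ is exactly the Baker-type estimate of Lemma~\ref{CorX}. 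From digit-level independence one then obtains the Gaussian behaviour of $S_N$ near each peak by the \emph{method of moments} (Lemma~\ref{Lecomparemoments}), not by a direct saddle-point expansion; note that a second-order Taylor expansion would in any case not yield the uniform $(1+o(1))$ you claim over a box of side $(\log N)^{-1/2+\varepsilon}$ without control of all higher joint cumulants.

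Your minor-arc paragraph is also off the mark for this particular theorem. There are no ``digit-by-digit recursions combined with van der Corput-type iterations'' in Drmota's proof: the van der Corput and Gowers-norm machinery in Section~4 of the present paper is needed only for the \emph{rarefied} sums $\digitsum_2(3^Kn)$ with $K\asymp\log N$. For the unrarefied statement you are proving, the minor-arc bound comes from the very same lower bounds on $\lVert\alpha\rVert$ described above (compare Lemma~\ref{Leexponentialsums} and the derivation of Propositions~\ref{prp_1} and~\ref{Pro2} at $K=0$). In short, Baker's theorem does essentially all the work, but it does it on the major arcs---establishing independence---rather than through the mechanism you sketch.
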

Concerning similar values of $\digitsum_2$ and $\digitsum_3$, 
Deshouillers, Habsieger, Laishram, and Landreau~\cite{DeshouillersHabsiegerLaishramLandreau2017} write
\begin{quote}
``[\ldots] it seems to be unknown whether there are infinitely many integers $n$ for which $\digitsum_2(n)=\digitsum_3(n)$ or even for which
$\lvert\digitsum_2(n)-\digitsum_3(n)\rvert$ is significantly small.''
\end{quote}
They prove the following theorem.
\begin{theorem*}
For sufficiently large $N$, we have
\[\#\bigl\{n\leq N:\lvert \digitsum_3(n)-\digitsum_2(n)\rvert\leq 0.1457205\log n\bigr\}>N^{0.970359}.\]
\end{theorem*}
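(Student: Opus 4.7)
\smallskip

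\noindent\textbf{Plan.} I will take an entropy-maximization approach: construct a large family of integers $n<N$ with a prescribed frequency of the digits $0,1,2$ in their ternary expansion, such that the corresponding value of $\digitsum_3(n)$ matches the typical value of $\digitsum_2(n)$ to leading order, and then use a Fourier/equidistribution argument to show that $\digitsum_2(n)$ really is close to typical on most of this family. The constant $0.970359$ will emerge as a normalized Shannon entropy.

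Fix $L=\lfloor\log_3N\rfloor$ and let $(p_0,p_1,p_2)$ range over probability vectors with $p_0+p_1+p_2=1$. Impose the matching constraint
\[
p_1+2p_2=\tfrac12\log_23\approx 0.7924813,
\]
so that the expected ternary digit sum $L(p_1+2p_2)$ agrees with the typical binary digit sum $\tfrac12\log_2N=\tfrac L2\log_23$ of an integer near $N=3^L$. Maximising $H(p)=-\sum_j p_j\log p_j$ under this linear constraint via a Lagrange multiplier gives the condition $p_1^2=p_0\smallspace p_2$, leading to the unique maximiser $(p_0^*,p_1^*,p_2^*)\approx(0.4425,0.3225,0.2350)$, for which $H(p^*)/\log 3\approx 0.970359$. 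Let $\mathcal F$ consist of those $n\in[0,3^L)$ whose ternary expansion contains $\lfloor Lp_0^*\rfloor$ zeros, $\lfloor Lp_1^*\rfloor$ ones, and $\lceil Lp_2^*\rceil$ twos. By Stirling's formula
\[
|\mathcal F|=\binom{L}{\lfloor Lp_0^*\rfloor,\lfloor Lp_1^*\rfloor,\lceil Lp_2^*\rceil}=N^{H(p^*)/\log 3+o(1)}\gg N^{0.970359},
\]
and every $n\in\mathcal F$ satisfies $\digitsum_3(n)=\lfloor Lp_1^*\rfloor+2\lceil Lp_2^*\rceil=\tfrac L2\log_23+O(1)$.

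The proof now reduces to showing that for almost every $n\in\mathcal F$ one also has $\digitsum_2(n)=\tfrac L2\log_23+o(\log n)$; this then yields $|\digitsum_3(n)-\digitsum_2(n)|=o(\log n)$, with plenty of room inside the permitted width $0.1457205\log n$. This binary concentration statement is the main obstacle. I would attack it by a second-moment/Chebyshev argument: first- and second-moment bounds on the $i$-th binary digit $d_i^{(2)}(n)$ as $n$ ranges over $\mathcal F$ reduce, via orthogonality modulo $2^r$, to estimating the exponential sums $\sum_{n\in\mathcal F}e^{2\pi i\xi n/2^r}$ for $1\leq\xi<2^r$. A saddle-point extraction of the type-$p^*$ component replaces ``$n\in\mathcal F$'' by the associated product measure with rates $(p_j^*)$ at the cost of a $\mathrm{poly}(L)$ factor, and the resulting expectation factorises as
\[
\prod_{j=0}^{L-1}\Bigl(p_0^*+p_1^*\smallspace e^{2\pi i\xi 3^j/2^r}+p_2^*\smallspace e^{4\pi i\xi 3^j/2^r}\Bigr).
\]
Each factor has modulus strictly less than $1$ because $\gcd(2,3)=1$ forces $\xi\cdot 3^j\not\equiv0\pmod{2^r}$ whenever $\xi\not\equiv0$; the quantitative geometric decay in $L$ then follows from equidistribution of the orbit $(3^j\bmod 2^r)_j$, which rests on the maximal multiplicative order of $3$ modulo powers of $2$. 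This is exactly the same arithmetic ingredient -- going back to Baker's theorem on linear forms in logarithms -- that underpins Drmota's local limit theorem quoted above, and I would recycle those estimates.
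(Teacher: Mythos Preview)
This theorem is not proved in the present paper; it is quoted from Deshouillers--Habsieger--Laishram--Landreau as a prior result. So there is no in-paper proof to compare against, and I assess your sketch on its own merits and against what the two constants themselves reveal about the DHLL argument.

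Your construction of $\mathcal F$ and the entropy maximisation are exactly right and recover the exponent $0.970359$. Where you diverge from DHLL is the second half. Their argument does not attempt to show that $\digitsum_2$ concentrates on $\mathcal F$; it simply subtracts the \emph{global} large-deviation set
$E_\beta=\{n<N:|\digitsum_2(n)-\tfrac12\log_2N|>\beta\log_2N\}$,
which has size at most $N^{H_2(1/2+\beta)}$ by the binomial tail bound, and chooses $\beta$ so that $H_2(1/2+\beta)$ equals the ternary-entropy exponent $0.970359$. This forces $\beta\approx0.101$, whence on $\mathcal F\setminus E_\beta$ one gets $|\digitsum_3(n)-\digitsum_2(n)|\le\beta\log_2N=(\beta/\log 2)\log n\approx0.1457205\log n$. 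Both numerical constants in the statement are thus two readings of a single entropy value; the specific $0.1457205$ is the fingerprint of this subtraction argument, and your sketch does not explain where that number comes from.

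Your Fourier route is more ambitious --- if it worked it would give $o(\log n)$, strictly stronger than stated --- but the key estimate has a genuine gap. The product
$\prod_{j<L}\bigl|p_0^*+p_1^*\,\e(\xi3^j/2^r)+p_2^*\,\e(2\xi3^j/2^r)\bigr|$
does \emph{not} decay uniformly in $\xi$: for $\xi=1$ and $r\approx L\log_23$ one has $3^j/2^r\approx3^{j-L}$, so every factor except the last $O(1)$ of them equals $1-O(9^{j-L})$, and the whole product is bounded below by a positive constant independent of $L$. (This reflects the real bias of the top few binary digits on $\mathcal F$.) A correct version would have to isolate such bad frequencies and argue that they affect only $O(1)$ binary digit positions; this is repairable, but it is not what you wrote, and ``geometric decay in $L$'' is false as stated. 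Finally, the appeal to Baker's theorem is a red herring here: the multiplicative order of $3$ modulo $2^r$ is $2^{r-2}$ by an elementary lifting-the-exponent computation, and linear forms in logarithms play no role in this particular estimate.
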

The result is nontrivial since $\digitsum_3(n)-\digitsum_2(n)$ usually has a value around
$c\log n$, where
\[c=\frac 1{\log 3}-\frac 1{\log 4}=0.1888\ldots.\]
Thus, they obtain in fact infinitely many $n$ such that $\lvert \digitsum_2(n)-\digitsum_3(n)\rvert$ is ``significantly small''.

A partial refinement was given by 
La Bret\`eche, Stoll, and Tenenbaum~\cite{BretecheStollTenenbaum2019}.
They proved in particular that for all multiplicatively independent integers $p,q\ge2$, the set
\begin{equation}\label{eqn_BretecheStollTenenbaum2019}
\bigl\{\digitsum_p(n)/\digitsum_q(n):n\geq 1\}
\end{equation}
is dense in $\mathbb R^+$.

The article~\cite{DeshouillersHabsiegerLaishramLandreau2017} was the starting point of the paper~\cite{Spiegelhofer2023collisions} by the second author.
Applying a rarefaction by some power of three, we aligned the expected values of $\digitsum_2$ and $\digitsum_3$.
The existence of infinitely many $(2,3)$-collisions was then established by means of a suitable \emph{pre-selection of shifts} (see the three proof steps on page~482 of~\cite{Spiegelhofer2023collisions}).

\begin{theorem*}[Spiegelhofer 2023]
There are infinitely many positive integers $n$ such that $\digitsum_2(n)=\digitsum_3(n)$.
\end{theorem*}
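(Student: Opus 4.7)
The local limit theorem of Drmota~2001 quoted above does not directly produce collisions. Its two Gaussian factors are centered at $\log_2(N)/2$ and $\log_3(N)$, which are separated by a positive constant multiple of $\log N$, whereas the standard deviations are only of order $\sqrt{\log N}$. Setting $k_1=k_2=k$ therefore yields a density that is exponentially small in $\log N$, and hence much smaller than the error term $o(1/\log N)$. The strategy is to first rarefy the $n$-range so as to align the means of $\digitsum_2$ and $\digitsum_3$, and only then to prove a local-type limit theorem in this rarefied setting.

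\emph{Rarefaction.} Consider $n=3^a m$ with $0\le m<2^\lambda$ and $a=\lfloor c\lambda\rfloor$ for $\lambda\to\infty$. Since $\digitsum_3(3^a m)=\digitsum_3(m)$, the distribution of $\digitsum_3(n)$ over this set has mean $\sim \lambda/\log_2 3$. On the other hand $n$ is of size $2^{\lambda(1+c\log_2 3)}$, so for a typical $m$ one expects $\digitsum_2(n)\sim \lambda(1+c\log_2 3)/2$. Choosing $c=(2-\log_2 3)/(\log_2 3)^2$ aligns both means to a common value $\mu(\lambda)\sim\lambda/\log_2 3$.

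\emph{Pre-selection of shifts and local limit theorem.} Write $m=2^{\lambda_1}u+v$ with $v<2^{\lambda_1}$ and $\lambda_1<\lambda$ an auxiliary threshold. Pre-select the high-order part $u$ so that $\digitsum_2(3^a\cdot 2^{\lambda_1}u)$ and $\digitsum_3(2^{\lambda_1}u)$ both equal a prescribed value $k$ close to $\mu(\lambda)$, and so that the carries produced at the ``junction'' by adding $3^a v$ are controllable. With such a $u$ fixed, prove a bivariate local limit theorem for the pair
\[
\bigl(\digitsum_2(3^a\cdot 2^{\lambda_1}u + 3^a v),\, \digitsum_3(2^{\lambda_1}u+v)\bigr)
\]
by Fourier analysis as $v$ ranges in $[0,2^{\lambda_1})$. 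This should yield $\gg 2^{\lambda_1}/\sqrt{\lambda_1}$ values of $v$ with $\digitsum_2(n)=\digitsum_3(n)=k$, and letting $\lambda\to\infty$ produces infinitely many collisions.

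\emph{Main obstacle.} The hardest part is the quantitative decay of the joint exponential sum
\[
\frac{1}{2^{\lambda_1}}\sum_{v<2^{\lambda_1}}\exp\!\bigl(2\pi i\alpha\,\digitsum_2(3^a v)+2\pi i\beta\,\digitsum_3(v)\bigr)
\]
for $(\alpha,\beta)$ away from $\mathbb Z^2$, uniformly in the relevant parameters. The value $\digitsum_2(3^a v)$ depends on the base-$2$ digits of $v$ in a carry-dependent way (multiplication by $3^a$ cascades many carries), and one must exclude conspiracies between $\alpha$ and $\beta$ at rationals with small denominator. This is where exponential-sum estimates of Gelfond type, combined with Baker's theorem on linear forms in logarithms (as in Drmota~2001), are expected to enter; the pre-selection of $u$ is precisely what reduces the problem to a range of $v$ for which such estimates can be made effective.
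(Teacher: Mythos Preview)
Your overall strategy --- rarefy by a power of $3$ to align the Gaussian centers of $\digitsum_2$ and $\digitsum_3$, then prove a local limit theorem on the rarefied range --- is the right one, and it is what both the original 2023 paper and the present paper do. However, your ``pre-selection'' step as written is circular. You ask for a $u$ with $\digitsum_2(3^a\cdot 2^{\lambda_1}u)=\digitsum_3(2^{\lambda_1}u)=k$. Since multiplication by $2^{\lambda_1}$ is a shift in base~$2$ and multiplication by $3^a$ is a shift in base~$3$, this is the same as demanding $\digitsum_2(n_0)=\digitsum_3(n_0)=k$ for $n_0=3^a2^{\lambda_1}u$; that is, $n_0$ is already a collision. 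Producing such a $u$ for each large $\lambda$ \emph{is} the theorem, so the subsequent local limit theorem over $v$ never gets off the ground. (The phrase ``pre-selection of shifts'' in the 2023 paper does not refer to fixing a high-order part with the target property; it refers to a step inside the exponential-sum analysis.)

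The present paper avoids any such bootstrap by proving the local limit theorem \emph{directly} for the rarefied count
\[
\#\bigl\{n<N:\digitsum_2(3^Kn)=k_1,\ \digitsum_3(n)=k_2\bigr\},
\]
uniformly in $0\le K\le c_2\log N$ (Theorem~\ref{thm_main}); the collision statement is then the specialization $k_1=k_2$ with $K$ chosen to make the two Gaussian means coincide. The hard part is the exponential-sum bound (Proposition~\ref{prp_1}), and here your expected toolkit is too light. Baker's theorem and Gel'fond-type decay handle only the ``easy'' half (Proposition~\ref{Pro2}); for Proposition~\ref{prp_1} one needs iterated van~der~Corput, Schlickewei's $p$-adic subspace theorem (to show that the binary expansion of $M\cdot 3^K$ has no long runs of $\tO$s or $\tL$s, Lemma~\ref{lem_not_too_many_digits}, which then permits the digit-elimination step of Corollary~\ref{cor_odd}), and a Gowers-norm estimate for $\digitsum_2$. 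Your diagnosis of the main obstacle --- the joint exponential sum in $(\alpha,\beta)$ --- is accurate, but the subspace theorem, not Baker, is the decisive Diophantine input on the $\digitsum_2(3^K\cdot)$ side.
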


The main theorem of the present paper yields this result, as well as the local theorem by Drmota stated above, and the special case $(p,q)=(2,3)$ of the La Bret\`eche--Stoll--Tenenbaum result as corollaries.

\begin{theorem}\label{thm_main}
Suppose that $0\le K \le c_2 \log N$, where $c_2 >0$. 
Then, we have uniformly for $K$ in this range and for all integers $k_1,k_2\ge 0$, 
as $N\to\infty$, 
\begin{eqnarray*}
&&\rcp{N}\#\left\{n<N\left|s_{2}(3^K n)= k_1, s_{3}(n)= k_2 \right.\right\}\\
&&\quad =  \frac 1{\sqrt{2\pi \frac{1}{4}\log_{2} (N\, 3^K) } }
\exp\left(- \frac{\left(k_1 - \frac{1}{2}\log_{2} (N\, 3^K) \right)^2}
{\frac{1}{2}\log_{2} (N\, 3^K) } \right) \\
&&\qquad \times  \frac 1{\sqrt{2\pi \frac{2}{3}\log_{3} N } }
\exp\left(- \frac{\left(k_2 - \log_{3} N \right)^2}
{\frac{4}{3}\log_{3} N } \right) \\
&& \quad + o\left( (\log N)^{-1} \right).
\end{eqnarray*}
An analogous statement holds with reversed roles of $2$ and $3$: in this case, rarefaction by $2^K$ with $s_3(2^K n)$ for even $n$ and $s_3(2^K n+1)$ for odd $n$ is used.
\end{theorem}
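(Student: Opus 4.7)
The plan is a two-dimensional Fourier / circle-method analysis of the joint counting function, adapting the saddle-point / local CLT argument used for the Drmota~2001 local theorem stated above so as to handle the rarefaction $n\mapsto 3^K n$ on the binary side. Writing $e(x)\eqdef\exp(2\pi i x)$, Fourier inversion yields
\[
\#\bigl\{n<N : s_2(3^K n)=k_1,\ s_3(n)=k_2\bigr\}
= \int_0^1\!\!\int_0^1 S_N(\alpha,\beta)\, e(-\alpha k_1 - \beta k_2)\,
d\alpha\, d\beta,
\]
with the central exponential sum
\[
S_N(\alpha,\beta) \eqdef \sum_{n<N} e\bigl(\alpha\, s_2(3^K n) + \beta\, s_3(n)\bigr).
\]
The overall strategy is to evaluate $S_N$ by a Gaussian saddle-point expansion on a shrinking ``major arc'' containing $(0,0)$ and to bound it by $o(N/\log N)$ on the complementary minor arcs.

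On the major arc I would carry out a joint moment / cumulant expansion. The mean vector and covariance matrix of $\bigl(s_2(3^K n), s_3(n)\bigr)$ for $n<N$ need to be evaluated to leading order; in particular the mean $\tfrac{1}{2}\log_2(N\,3^K)$ arises because $3^K$ is odd, so that the binary digits of $3^K n$ remain asymptotically equidistributed. The decisive structural fact is that the \emph{covariance} between $s_2(3^K n)$ and $s_3(n)$ is negligible compared with the individual variances; this decorrelation turns the saddle-point integral into the advertised product of two one-dimensional Gaussians, and a standard local-CLT inversion then produces the main term.

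On the minor arcs the aim is a uniform bound $|S_N(\alpha,\beta)| \ll N/(\log N)^A$ for every fixed $A$. I expect a two-regime splitting. When $\beta$ lies at positive distance from $\mathbb Z$, the classical Gelfond bound $|\sum_{n<N} e(\beta s_3(n))|\ll N^{1-\eta}$ for the $3$-additive factor essentially controls the joint sum after reorganising $n$ by ternary digit blocks. When $\beta$ is close to $0$ but $\alpha$ is not, the Fourier expansion of the indicator of $3^K\mathbb Z$ gives
\[
\sum_{n<N} e\bigl(\alpha\, s_2(3^K n)\bigr)
= \frac{1}{3^K} \sum_{j=0}^{3^K - 1} \sum_{m < 3^K N}
e\bigl(\alpha\, s_2(m) + jm/3^K\bigr),
\]
and the problem becomes one of twisted Gelfond-type bounds for $s_2$ along arithmetic progressions.

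The main obstacle, and the technical heart of the proof, is the need for a bound of the form
\[
\Bigl|\sum_{m<M} e\bigl(\alpha s_2(m) + m\theta\bigr)\Bigr|
\ll M\cdot M^{-\eta(\alpha)}
\]
uniform in $\theta\in\mathbb R$ and in $M = 3^K N$ with $K$ as large as $c_2\log N$, strong enough to beat the $3^K$ prefactor appearing in the Fourier expansion above. This is a local-limit refinement of the \emph{pre-selection of shifts} idea of \textsc{Spiegelhofer~2023}: one must exploit the generic pseudorandomness of $s_2$ along arithmetic progressions of common difference $3^K$, uniformly as $K$ grows logarithmically in $N$. I expect this uniform exponential sum bound to be the step that requires substantially new input beyond the techniques of the Drmota~2001 paper. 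Once the above bound is in place, the analogous statement with $2$ and $3$ interchanged follows by the same argument, the residue $n\bmod 2$ being used to separate even and odd cases inside the sum over $m$.
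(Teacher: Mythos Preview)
Your Fourier-inversion framework and the major-arc plan via a moment expansion are essentially what the paper does (this is the content of Proposition~\ref{Pro2}, proved in Section~5 by comparing mixed moments of the digit vectors $(\varepsilon_{2,k}(3^Kn),\varepsilon_{3,\ell}(n))$ with i.i.d.\ models, the Diophantine input being Baker's theorem and the $p$-adic subspace theorem). Two points on the minor-arc side are genuine gaps, however.

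First, you overlook a second major arc. Since $s_3(n)\equiv n\pmod 2$, the Gelfond bound for the ternary sum degenerates at $\beta=\tfrac12$: the sum $\sum_{n<N}e(\tfrac12 s_3(n))$ has no cancellation beyond the parity split, so ``$\beta$ at positive distance from $\mathbb Z$'' is not the correct minor-arc condition. The paper deals with this by decomposing the count according to $n\bmod 2$ from the start and treating neighbourhoods of $(0,0)$ and $(0,\pm\tfrac12)$ as two symmetric major arcs linked by the relation $S_1^{(i)}(t_1,t_2+\tfrac12)=(-1)^i S_1^{(i)}(t_1,t_2)$.

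Second, and more seriously, the attack on ``$\beta$ near $0$, $\alpha$ away from $0$'' via the Fourier expansion of $\mathbf 1_{3^K\mid m}$ cannot close quantitatively. Averaging the uniform-in-$\theta$ Gelfond bound over $j$ gives at best $(3^KN)^{1-\eta(\alpha)}$, and you need this $\ll N/\log N$; but on the major-arc boundary $\lVert\alpha\rVert\asymp L/\sqrt{\log N}$, so $\eta(\alpha)\asymp L^2/\log N\to 0$, while $3^K$ can be as large as $N^{c_2\log 3}$. The saving is swamped by a fixed power of $N$, and no refinement of the twisted Gelfond inequality can repair this, because the loss of a factor $3^K$ is structural to the detector expansion. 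The paper proceeds quite differently (Proposition~\ref{prp_1}): it applies van~der~Corput directly to $\sum_{n<N}e(\alpha s_2(3^Kn)+\beta s_3(n))$ with shifts that are themselves multiples of $3^K$, reducing to autocorrelations $\gamma_{r3^K}(\alpha)$. The key input is then Diophantine rather than analytic: the $p$-adic subspace theorem (Lemma~\ref{lem_not_too_many_digits}) shows that the binary expansion of $r3^K$ has only short constant blocks, hence many block changes, which makes $\gamma_{r3^K}(\alpha)$ small via the product formula of Lemma~\ref{lem_gamma_tail}. For large $K$ this is iterated, with ``digit-clearing'' multipliers supplied by Corollary~\ref{cor_odd}, and finished off by a Gowers-norm estimate. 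Your mention of ``pre-selection of shifts'' is in the right spirit, but the mechanism is this block-structure argument on $3^K$, not a bound on $s_2$ along progressions modulo $3^K$.
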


Combining the two parts of this theorem,
we obtain the following corollary.
\begin{theorem}\label{thm_local}
For every $\delta > 0$ there exists $K_0> 0$ such that for every 
pair of integers $(k_1,k_2)$ with
\[
k_1 \ge K_0,\ k_2 \ge K_0, \ k_1 \ge \delta k_2,\ k_2 \ge \delta k_1,
\]
there exists a non-negative integer $n$ satisfying
\[
\digitsum_2(n) = k_1 \quad\mbox{and}\quad \digitsum_3(n) = k_2.
\]
\end{theorem}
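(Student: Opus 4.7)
The plan is to deduce Theorem \ref{thm_local} directly from Theorem \ref{thm_main} by choosing $N$ and $K$ so that the pair $(k_1,k_2)$ lies at the joint peak of the two Gaussian factors on the right hand side. Then the main term will be of order $(\log N)^{-1}$, will dominate the error $o((\log N)^{-1})$, and will force the counted set to be non-empty. Any $n$ in that set yields the desired $m$ after ``un-rarefying'': in the first version, $m := 3^K n$ satisfies $s_3(m) = s_3(n)$ since multiplication by $3^K$ only appends zeros in base $3$, whence $(s_2(m),s_3(m)) = (k_1,k_2)$.

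Writing $\mu_1 = \tfrac{1}{2}\log_2(N\cdot 3^K)$ and $\mu_2 = \log_3 N$ for the two Gaussian means, I would first solve the real system $\mu_1 = k_1$, $\mu_2 = k_2$, which gives $N = 3^{k_2}$ and $K = 2k_1\log_3 2 - k_2$. This $K$ is non-negative precisely when $k_1 \ge \tfrac{1}{2}k_2\log_2 3$ (call this \emph{Case 1}); in the complementary \emph{Case 2} I would use the analogous reversed statement of Theorem \ref{thm_main}, with the symmetric choice $N = 2^{2k_1}$, $K = k_2\log_2 3 - 2k_1 \ge 0$. Rounding $K$ up to the nearest integer shifts the corresponding Gaussian mean by $O(1)$, which is negligible against the standard deviation $\sqrt{\mathrm{const}\cdot\log N}$; accordingly, the exponents in both Gaussian factors will be $o(1)$ and each factor will contribute a quantity of exact order $(\log N)^{-1/2}$.

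Next I would verify the condition $K \le c_2 \log N$ of Theorem \ref{thm_main}. In Case 1 the hypothesis $k_2 \ge \delta k_1$ gives $K \le 2k_1\log_3 2 \le (2\log_3 2/\delta)\log_3 N = O(\log N)$, with the implied constant depending only on $\delta$; Case 2 is symmetric via $k_1 \ge \delta k_2$. Taking $c_2 = c_2(\delta)$ large enough, Theorem \ref{thm_main} becomes applicable and the counted set has cardinality $\gg N/\log N > 0$ for all sufficiently large $K_0 = K_0(\delta)$, which furnishes the sought-after $n$ and hence $m$.

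The main thing to verify is that Cases 1 and 2 together exhaust the admissible sector: the two thresholds above show that Case 1 covers $k_1/k_2 \ge \tfrac{1}{2}\log_2 3 \approx 0.792$ while Case 2 covers the reverse inequality, so jointly they account for every admissible pair with $\delta \le k_1/k_2 \le 1/\delta$. The remaining items---the $O(1)$ rounding cost in the Gaussian exponent, the numerical value of $c_2(\delta)$, and the domination of the main term over the error---are routine calculations.
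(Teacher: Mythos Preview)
Your proposal is correct and follows essentially the same route as the paper: fix $c_2=c_2(\delta)$, place $(k_1,k_2)$ at the Gaussian peak by solving $\mu_2=\log_3 N=k_2$ and $\mu_1=\tfrac12\log_2(3^KN)=k_1$ (giving $N=3^{k_2}$, $K=2k_1\log_3 2-k_2$), split into the two cases according to the sign of $K$ with threshold $k_1/k_2=\tfrac12\log_2 3$, and use the sector hypothesis to bound $K\le c_2\log N$. The paper does exactly this (with the same threshold and the same choice of $c_2$ up to the harmless $-k_2$ you dropped), and likewise leaves the un-rarefying step $m=3^Kn$ (resp.\ $m=2^Kn$ or $2^Kn+1$, where in the latter case one targets $k_1-1$ for $s_2$) implicit.
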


This result means that the pairs $(\digitsum_2(n),\digitsum_3(n))$ range over {\it almost all} possible pairs $(k_1,k_2)$.
Actually this result is best possible, in the light of the following result by Senge and Straus~\cite{SengeStraus1973}: for every pair $(k_1,k_2)$ of positive integers there are at most
finitely many non-negative integers $n$ such that $\digitsum_2(n) = k_1$ and $\digitsum_3(n) = k_2$.

\bigskip
Specializing further, Theorem~\ref{thm_local} immediately yields the following result.
\begin{corollary}\label{cor_special}
Assume that $a,b\ge1$ are integers. There exist infinitely many natural numbers $n$ such that
\begin{equation}\label{eqn_special}
a\smallspace\digitsum_2(n)=b\smallspace\digitsum_3(n).
\end{equation}
\end{corollary}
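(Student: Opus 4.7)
The plan is to derive Corollary~\ref{cor_special} directly from Theorem~\ref{thm_local} by a parametric choice of target digit sums. Given fixed positive integers $a,b$, I want to force the identity $a\,\digitsum_2(n)=b\,\digitsum_3(n)$. I will parametrize by setting $k_1 \eqdef bt$ and $k_2 \eqdef at$ for a positive integer parameter $t$, so that $a k_1 = abt = b k_2$ holds identically in $t$. This reduces the problem to producing, for arbitrarily large $t$, a nonnegative integer $n$ satisfying $\digitsum_2(n)=bt$ and $\digitsum_3(n)=at$ simultaneously.

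Next I fix $\delta \eqdef \min(a,b)/\max(a,b)$, a strictly positive constant depending only on $a$ and $b$. With this choice, the ratios $k_1/k_2 = b/a$ and $k_2/k_1 = a/b$ are both at least $\delta$, so the ratio conditions of Theorem~\ref{thm_local} are satisfied for every $t$. Let $K_0 = K_0(\delta)$ be the threshold produced by that theorem, and choose $t$ large enough that $\min(a,b)\,t \ge K_0$. Then the pair $(k_1,k_2)=(bt,at)$ fulfills all four hypotheses of Theorem~\ref{thm_local}, which therefore supplies a nonnegative integer $n_t$ with $\digitsum_2(n_t)=bt$ and $\digitsum_3(n_t)=at$, hence $a\,\digitsum_2(n_t)=b\,\digitsum_3(n_t)$.

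Finally, I observe that the solutions $n_t$ obtained for different admissible $t$ are automatically distinct and tend to infinity. Indeed, $\digitsum_2(n_t)=bt\to\infty$ as $t\to\infty$, and from the trivial bound $\digitsum_2(m)\le \log_2(m+1)$ one has $n_t \ge 2^{bt}-1$; in particular, infinitely many values of $t$ yield infinitely many pairwise distinct integers $n_t$ satisfying \eqref{eqn_special}. There is essentially no obstacle here: all the analytic work is carried by Theorem~\ref{thm_local}, and the only point to verify is that the ratio constraints of that theorem can be arranged to hold, which is immediate for the choice $\delta=\min(a,b)/\max(a,b)$. This completes the proof of the corollary.
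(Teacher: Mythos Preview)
Your proof is correct and follows exactly the route the paper intends: the paper merely states that Theorem~\ref{thm_local} ``immediately yields'' Corollary~\ref{cor_special}, and you have spelled out the natural details of that deduction by parametrizing $(k_1,k_2)=(bt,at)$ and choosing $\delta=\min(a,b)/\max(a,b)$.
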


\subsection{Further directions}
\subsubsection{Catalan numbers, and $n!$ in base $12$.}

By Legendre's identity (valid for primes $p$) we have
\begin{equation}\label{eqn_legendre}
(p-1)\sum_{1\leq k\leq n}\nu_p(k)=n-\digitsum_p(n),
\end{equation}
and we see that the $p$-valuation of factorials, and hence combinatorial counting sequences formed by products of factorials, is intimately tied to the sum-of-digits function in base $p$.
For example, there is a direct connection between $(2,3)$-collisions and the base-$12$ expansion of $n!$~\cite{Deshouillers2012,Deshouillers2016,DeshouillersRuzsa2011}.
By~\eqref{eqn_legendre}, the integer $n\ge0$ is a collision if and only if
\[\nu_2(n!)=n-\digitsum_2(n)=2\smallspace\frac{n-\digitsum_3(n)}2=2\smallspace\nu_3(n!).\]
This is the case if and only if $n!$ is \emph{exactly divisible by} some power of $2^2\cdot 3^1=12$,
in symbols, $12^k\Mid n!$ for some $k$, where
\[12^k\Mid m\quad\Longleftrightarrow\quad 12^k\mid m\quad\textsf{and}\quad \gcd(12,m/12^k)=1.\]
In this case, and in this case only, the \emph{last significant base-$12$ digit} $\ell_{12}(n!)$ of $n!$ is an element of $\{1,5,7,11\}$.
Summarizing, we have the equivalences
\begin{equation}
\begin{array}{ll}
\digitsum_2(n)=\digitsum_3(n)
&\mbox{\textsf{if and only if}}\\[3mm]
\nu_2(n!)=2\smallspace\nu_3(n!)
&\mbox{\textsf{if and only if}}\\[3mm]
12^k\Mid n!\mbox{ for some }k
&\mbox{\textsf{if and only if}}\\[3mm]
\ell_{12}(n!)\in\{1,5,7,11\}.
\end{array}
\end{equation}
Together with J.-M.~Deshouillers and P.~Jelinek~\cite{DeshouillersJelinekSpiegelhofer2024}, second author proved that $\ell_{12}(n!)$ attains each digit in $\{1,\ldots,11\}$ infinitely many times, thus refining the theorem on the infinitude of $(2,3)$-collisions.

A related question concerns the $2$-and $3$-valuations of \emph{Catalan numbers}~\cite{DrmotaKrattenthaler2019,GaraevLucaShparlinski2006},
\[C_n=\frac1{n+1}\binom{2n}n.\]
\begin{conjecture}
Assume that $a,b\ge1$ are integers.
There exist infinitely many positive integers $n$ such that
\begin{equation}\label{eqn_catalan_collisions}
a\smallspace\nu_2\bigl(C_n\bigr)=b\smallspace\nu_3\bigl(C_n\bigr).
\end{equation}
\end{conjecture}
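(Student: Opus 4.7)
The plan is to reduce the conjecture to an analogue of Theorem~\ref{thm_local} for a suitable pair of digit-based functions of $n$. First, Legendre's identity~\eqref{eqn_legendre} gives
\[
\nu_p\bigl(\tbinom{2n}{n}\bigr)=\tfrac{1}{p-1}\bigl(2\digitsum_p(n)-\digitsum_p(2n)\bigr),
\]
which for $p=2$ collapses to $\digitsum_2(n)$ and for $p=3$ yields $C(n)\eqdef\digitsum_3(n)-\tfrac12\digitsum_3(2n)$, the number of carries produced by the base-$3$ addition $n+n$ (by Kummer's theorem). Dividing by $n+1$ yields $\nu_2(C_n)=\digitsum_2(n)-\nu_2(n+1)$ and $\nu_3(C_n)=C(n)-\nu_3(n+1)$. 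I would then restrict $n$ to the residue class $0\pmod 6$, for which $\nu_2(n+1)=\nu_3(n+1)=0$, so that~\eqref{eqn_catalan_collisions} collapses to
\[
a\smallspace\digitsum_2(n)=b\smallspace C(n),\qquad n\equiv 0\pmod 6.
\]

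The core step is an analogue of Theorem~\ref{thm_main} for the pair $(\digitsum_2(3^K n),C(n))$, where the rarefaction by $3^K$ plays exactly the same role as in the present paper: it inflates the Gaussian mean of the base-$2$ function so that every prescribed rational ratio $k_1/k_2$ becomes accessible. Since $C(3^K n)=C(n)$, multiplying by $3^K$ only shifts the base-$3$ digits and leaves the carry pattern of $n+n$ unchanged, so the restriction $n\mapsto 3^K n$ is harmless. The novelty is that $C$ is not $q$-additive but rather a \emph{hidden-Markov} base-$3$ digit function: whether position $i$ produces a carry depends on the digit $d_i(n)$ \emph{and} on the carry $c_{i-1}$ propagated from position $i-1$, and the pair $(d_i,c_{i-1})$ forms an irreducible, aperiodic Markov chain on $\{0,1,2\}\times\{0,1\}$ whose stationary law assigns mass $\tfrac12$ to $\{c=1\}$; consequently $C(n)$ is asymptotically Gaussian with mean $\tfrac12\log_3 n$ and an explicitly computable variance. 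Granting this joint local limit theorem, a verbatim imitation of the derivation of Theorem~\ref{thm_local} from Theorem~\ref{thm_main} shows that for every $\delta>0$ there exists $K_0$ such that any pair $(k_1,k_2)$ with $k_1,k_2\ge K_0$ and $\delta k_1\le k_2\le \delta^{-1}k_1$ is realized as $(\digitsum_2(n),C(n))$ for some $n\equiv 0\pmod 6$. For the pair $(a,b)$ of the conjecture, applying this with $\delta=\min(a,b)/\max(a,b)$ to the targets $(k_1,k_2)=(bm,am)$ for $m\ge K_0$ yields infinitely many $n$ satisfying~\eqref{eqn_catalan_collisions}.

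The principal obstacle is the joint local limit theorem for $(\digitsum_2,C)$. Although local limit theorems for Markovian base-$q$ digit functions are classical in the single-base setting, upgrading them to a joint statement paired with $\digitsum_2$, in the present quantitative form, demands transferring the Fourier-analytic and Baker-type arguments behind Theorem~\ref{thm_main} to the transfer-matrix setting: the scalar exponential sum $\sum_{d}\e^{2\pi i\theta f(d)}$ that controlled the base-$3$ side must be replaced by the spectral radius of a $2\times 2$ transfer matrix indexed by the two carry states, and a uniform spectral gap must be secured throughout the frequency range that survives the Baker-type bounds on linear forms in $\log 2$ and $\log 3$.
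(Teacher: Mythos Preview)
The statement you are attempting to prove is listed in the paper as a \emph{Conjecture}, not a theorem; the authors explicitly write ``We leave this as another open problem'' and give no proof. There is therefore no proof in the paper to compare your proposal against.

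As for the proposal itself, it is a plausible strategy but, as you yourself point out, it is not a proof: the joint local limit theorem for $(\digitsum_2,C)$ with the required uniformity in the rarefaction parameter $K$ is stated as ``the principal obstacle'' and left unproved. Until that step is actually carried out --- with the transfer-matrix replacement for the $\digitsum_3$-side Fourier analysis, the spectral gap uniform over the relevant frequency range, and the analogue of Proposition~\ref{prp_1} for the Markovian carry function $C$ --- the argument remains a programme, which is consistent with the paper's treatment of the statement as open.

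There is also a concrete gap in the outline. You only discuss rarefaction by $3^K$, which inflates the mean of $\digitsum_2$ while fixing $C$; this lets you reach ratios $k_1/k_2$ that are at least $\log 3/\log 2$, but not smaller ones. In the paper's proof of Theorem~\ref{thm_local}, the range $k_1/k_2<\log 3/(2\log 2)$ is handled by the \emph{second} part of Theorem~\ref{thm_main}, namely rarefaction by $2^K$ on the base-$3$ side. For the Catalan problem you would need an analogue that inflates the mean of $C$ relative to $\digitsum_2$; but $C(2^Kn)$ has no simple relation to $C(n)$, so the mechanism you would use for the complementary range of $(a,b)$ is not indicated. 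Without it, your scheme only addresses pairs $(a,b)$ with $b/a$ above a fixed threshold.
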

If $\gcd(a,b)=1$, this states that $C_n$ is exactly divisible by some power of $2^b3^a$ infinitely often.

More generally, in the spirit of our main theorem (Theorem~\ref{thm_main}), we could ask whether $(\nu_2(C_n),\nu_3(C_n))$ attains all values in the set
\[\bigl\{(k_1,k_2)\in\mathbb N^2: \lvert k_1+ik_2\rvert>K, \varepsilon<\arg(k_1+ik_2)<\pi/2-\varepsilon\bigr\},\]
where $\varepsilon>0$, and $K=K(\varepsilon)$ is large enough.
We leave this as another open problem.

\subsubsection\textbf{Collisions in different bases.}
P.~Jelinek (private communication) 
announced a proof of the existence of infinitely many collisions with respect to any pair $(p,q)$ of coprime bases $p,q\ge2$.
As a possible extension, we could again ask for corresponding statements concerning the prime factorization of Catalan numbers.

Collisions in more than two bases --- $\digitsum_p(n)=\digitsum_q(n)=\digitsum_r(n)$ --- are in general certainly very difficult to handle.
For example, it follows from work in progress by Jelinek that there exist infinitely many such collisions for some triples of bases, but it is not so clear what happens in the general case.
If two of the three bases are much larger than the third, we would need to reduce significantly the sum of digits in two bases synchronously in order to obtain a collision. Currently we do not see a way to achieve this.
In this context, it might be of interest to recall an ergodic conjecture~\cite{Furstenberg1970} by Furstenberg, concerning \emph{multiplicatively independent} integer bases $p,q\geq 2$:
let $\dim_H(A)$ be the Hausdorff dimension of a set $A\subseteq[0,1]$,
 and define
\[O_a(x)\coloneqq \bigl\{a^kx \bmod 1:k\in\mathbb N\bigr\}\]
Then
\begin{equation}\label{eqn_furstenberg}
\dim_H\bigl(\overline{O_p(x)}\bigr)+\dim_H\bigl(\overline{O_q(x)}\bigr)\geq 1
\end{equation}
for all irrational $x\in[0,1]$.
In other words, the base-$p$ and base-$q$ expansions of an irrational number cannot be ``simple'' at the same time (see Shmerkin~\cite{Shmerkin2019} and Wu~\cite{Wu2019} for partial solutions of this conjecture, and Adamczewski--Faverjon~\cite{AdamczewskiFaverjon2020} for solutions of several problems concerning the joint representation of a number in two bases).

\subsection{Auxiliary results}

Theorem~\ref{thm_main} (and consequently Theorem~\ref{thm_local}) follows directly 
from the following two propositions (see Section~\ref{sec:2}).

\begin{proposition}\label{prp_1}
Suppose that $c_2>0$ is a real number.
There exists $c>0$ such that uniformly for all integers $K\ge0$ satisfying
$0\le K \le c_2 \log N$, and all real $t_1$ and $t_2$,
\[
S_1^{(i)}  = \sum_{n< N, \, n \equiv i \bmod 2} e\left( t_1 \digitsum_2(3^K n) + t_2 \digitsum_3(n) \right) \ll N\, \exp\left( - c \log N 
\| t_1\|^2 
\right)
\]
and
\[
S_2^{(i)} = \sum_{n< N, \, n \equiv i \bmod 2} e\left( t_1 \digitsum_2(n) + t_2 \digitsum_3(2^Kn+r) \right) \ll N\, \exp\left( - c \log N 
\|2t_2\|^2 
\right).
\]
for $i,r\in \{0,1\}$.
\end{proposition}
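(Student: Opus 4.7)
The plan is to prove the first bound ($S_1^{(i)}$) in detail; the second follows by a symmetric argument in which the roles of $2$ and $3$ are swapped (the factor $\|2t_2\|$ in the exponent rather than $\|t_2\|$ arises from the parity adjustment $r\in\{0,1\}$ appearing in $\digitsum_3(2^K n + r)$).

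First, I would remove the parity constraint $n \equiv i \bmod 2$ by expanding it as $\tfrac12(1 + (-1)^{n-i})$, so that it suffices to bound $\sum_{n<N}\chi(n)\,e(t_1 \digitsum_2(3^K n) + t_2 \digitsum_3(n))$ for $\chi(n)\in\{1,(-1)^n\}$. The central observation is the digit-shift identity
\[
\digitsum_3(3^K n) \;=\; \digitsum_3(n),
\]
which holds because multiplication by $3^K$ is simply a left shift in base $3$. Substituting $m = 3^K n$ converts the sum into a sum over the arithmetic progression $\{m<3^K N : 3^K \mid m\}$ (intersected with a parity class, which remains a single residue class since $\gcd(3^K,2)=1$), of the \emph{joint} weight $e(t_1 \digitsum_2(m)+t_2 \digitsum_3(m))$.

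Next, I would detect the $3^K$-divisibility via additive characters, $\mathbb{1}[3^K\mid m] = 3^{-K}\sum_{a=0}^{3^K-1}e(am/3^K)$, reducing matters to a uniform estimate on
\[
U(a) \;=\; \sum_{m<3^K N} e\!\left(t_1 \digitsum_2(m)+t_2 \digitsum_3(m)+\frac{am}{3^K}\right).
\]
The core analytic input is a Gelfond-type joint exponential sum estimate, in the spirit of the bounds in~\cite{Drmota2001}, of the form
\[
|U(a)| \;\ll\; M\,\exp\bigl(-c\log M\,\|t_1\|^2\bigr), \qquad M = 3^K N,
\]
uniformly in $a$. The mechanism is that both $e(t_2 \digitsum_3(m))$ and $e(am/3^K)$ factorize across the base-$3$ digits of $m$, whereas $e(t_1 \digitsum_2(m))$ factorizes across the base-$2$ digits; the multiplicative independence of $2$ and $3$ prevents the two digit systems from resonating, and the resulting trigonometric product at each scale is bounded below $1$ by a quantity proportional to $\|t_1\|^2$, which aggregates to the exponential decay.

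The principal technical obstacle is the apparent loss of a factor $3^K$ in passing from $U(a)$ back to $S_1^{(i)}$: naively summing $3^K$ pointwise bounds and dividing by $3^K$ returns only $\ll M\exp(\cdots) = 3^K N\exp(\cdots)$, which is too weak when $K$ is comparable to $\log N$. Closing this gap is, I expect, the heart of the proof: one must either upgrade the pointwise bound on $U(a)$ to an $L^2$-average via Parseval (using a restricted-range estimate on $\sum_{m\equiv r\bmod 3^K} e(t_1 \digitsum_2(m)+t_2 \digitsum_3(m))$ for each residue $r$), or alternatively bypass the character decomposition by embedding the multiples of $3^K$ directly into the base-$3$ digit expansion of $m$ and handling the resulting base-$2$ carry patterns by a block-decomposition argument. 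The hypothesis $K \le c_2\log N$ is used precisely here, ensuring $\log(3^K N) \asymp \log N$ so that the decay constant $c$ can be chosen uniformly in $K$. Combining these ingredients yields $|S_1^{(i)}| \ll N\exp(-c\log N\,\|t_1\|^2)$, as required.
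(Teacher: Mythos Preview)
Your approach has a genuine gap that you yourself identify but do not close. After substituting $m=3^Kn$ and expanding the divisibility condition in additive characters, you arrive at $3^K$ sums $U(a)$, each of which you bound pointwise by $3^KN\exp(-c\log(3^KN)\|t_1\|^2)$. Averaging this over $a$ returns $3^KN\exp(\cdots)$, which is too large by exactly the factor $3^K$; since $K$ may be as large as $c_2\log N$, this loss is polynomial in $N$ and cannot be absorbed into the constant $c$. Your suggested remedies do not work as stated: the Parseval route gives $\sum_a|U(a)|^2=3^K\sum_{m\equiv m'\bmod 3^K}e\bigl(t_1(\digitsum_2(m)-\digitsum_2(m'))+t_2(\digitsum_3(m)-\digitsum_3(m'))\bigr)$, and bounding this correlation sum over the progression is no easier than the original problem. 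The alternative of ``embedding multiples of $3^K$ directly into the base-$3$ expansion'' is not an argument --- it simply restates the difficulty. Moreover, the claimed uniform bound on $U(a)$ is itself the substance of the matter: the assertion that ``multiplicative independence of $2$ and $3$ prevents resonance'' is exactly what has to be \emph{proved}, and in the paper's setting requires deep Diophantine input.

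The paper proceeds along an entirely different line. Rather than unfolding the progression by characters, it applies van der Corput's inequality (once, and for large $K$ iteratively) to reduce $|S_1^{(i)}|^2$ to averages of correlations $\digitsum_2(n3^K)-\digitsum_2((n+r)3^K)$; the carry lemma then eliminates the $\digitsum_3$ contribution by truncation. The decisive step is to show that the shift $r3^K$ has many $\tO\tL$-transitions in binary, so that the autocorrelation $\gamma_{r3^K}(t_1)$ decays like $\exp(-cK\|t_1\|^2)$. This is where the Diophantine content enters: Schlickewei's $p$-adic subspace theorem is used to prove that the binary expansion of $M3^K$ (for $M$ up to a small power of $2^K$) has no constant block of length exceeding $o(K)$. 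For large $K$ this is combined with an ``odd elimination'' lemma that manufactures multipliers clearing prescribed digit windows, allowing a block-by-block removal of digits down to a Gowers norm on $\mathbb Z/2^\rho\mathbb Z$. None of this machinery --- van der Corput, the subspace theorem, digit elimination, Gowers norms --- appears in your sketch, and the character-sum reduction you propose does not lead to a proof without it.
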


\begin{remark}
We will prove in detail only the first of these formulas, while the proof of the second is analogous.
These two statements correspond to the adjusting the expected values of $\digitsum_2$ and $\digitsum_3$ in opposite directions, thus allowing for $\digitsum_2(n)/\digitsum_3(n)$ to be ``large'' and ``small'', respectively.
\end{remark}

\begin{proposition}\label{Pro2}
Suppose that $c_2>0$ is a real number. Then we have uniformly for all integers $K\ge0$ satisfying
$0\le K \le c_2 \log N$, and all real $t_1$ and $t_2$
\begin{align*}
S_1^{(i)} &= \sum_{n< N\, n \equiv i \bmod 2} e\left( t_1 s_2(3^K n) + t_2 s_3(n) \right) = 
\frac N2\, e\left(\frac{t_1}2 \log_2(3^K N) + t_2 \log_3(N) \right) \\
&\times \exp \left( 
- \frac {\pi^2}2  t_1^2\log_2(3^K N) - \frac{4\pi^2}3 t_2^2\log_3(N) 
\right) + o(N)
\end{align*}
and 
\begin{align*}
S_2^{(i)} &= \sum_{n< N\, n \equiv i \bmod 2} e\left( t_1 s_2(n) + t_2 s_3(2^Kn) \right) = 
\frac N2\, e\left( \frac{t_1}2 \log_2(N) + t_2 \log_3(2^K N) \right) \\
&\times \exp \left( 
- \frac{\pi^2 }2 t_1^2\log_2(N) - \frac{4 \pi^2}3 t_2^2\log_3(2^K N+r) 
\right) + o(N)
\end{align*}
for $i,r\in \{0,1\}$.
\end{proposition}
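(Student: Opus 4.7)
The right-hand side of Proposition~\ref{Pro2} is the product of two Gaussian characteristic functions: one with mean $\frac12\log_2(3^K N)$ and variance $\frac14\log_2(3^K N)$ (matching the marginal statistics of $s_2(3^K n)$), and one with mean $\log_3 N$ and variance $\frac23\log_3 N$ (matching those of $s_3(n)$). The proposition thus amounts to a joint central limit theorem for the pair $(s_2(3^K n), s_3(n))$ on $\{n<N:n\equiv i\bmod 2\}$, with asymptotic independence of the two marginals, uniformly in $t_1,t_2$ and in $K\le c_2\log N$. I would prove this by a conditioning argument that exploits the coprimality of the bases to decouple the base-$2$ and base-$3$ structures of $n$.

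Concretely, fix a scale $L=L(N)$ with $L\to\infty$ slowly and $L=o(\log N)$, and split $n=a+3^L m$ with $0\le a<3^L$. Then $s_3(n)=s_3(a)+s_3(m)$ and $3^K n=3^K a+3^{K+L} m$, and the parity constraint $n\equiv i\bmod 2$ transfers to a condition on the pair $(a\bmod 2,m\bmod 2)$. The factor $e(t_2 s_3(m))$ in the bulk variable $m$ expands as a product of $\log_3 N-L+O(1)$ independent base-$3$ digit characters with common average $\tfrac13(1+e(t_2)+e(2t_2))$; a second-order Taylor expansion of its logarithm at $t_2=0$ produces the Gaussian factor in $t_2$ with the correct mean and variance. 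The remaining factor $e(t_1 s_2(3^K n))$ becomes a binary digit-sum character along an arithmetic progression of common difference $3^{K+L}$ in $m$.

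The technical core of the argument is then a Gaussian approximation for this last character sum, uniformly in the residue $a\bmod 3^L$, in the parity class of $m$, and in $K\le c_2\log N$. This is the analogue of Drmota's 2001 local Gaussian approximation for $s_2$, but in arithmetic progressions whose common difference is a power of $3$; its key input is a Gelfond-type cancellation estimate for exponential sums of the form $\sum_m e(b\cdot 3^{K+L}m/2^\lambda)$, which rests on Baker's theorem on linear forms in logarithms to decorrelate the base-$3$ progression structure from the base-$2$ digit structure. Summing over $a\in[0,3^L)$ with weights $e(t_2 s_3(a))$ and the appropriate parity indicator then reconstructs the full $s_3$-marginal Gaussian, and the product of the two Gaussian factors matches the claim.

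The main obstacle is the uniformity in $K$ up to $c_2\log N$: the Baker-type bounds on linear forms in $\log 2$ and $\log 3$ degrade as $K$ grows, and one must track this dependence carefully to keep the cumulated error at the $o(N)$ level. A secondary but nontrivial point is the second half of the proposition (with $s_3(2^K n+r)$ in place of $s_3(n)$): the roles of $2$ and $3$ are swapped, so the entire argument must be mirrored using a base-$3$ analogue of Gelfond's binary digit-sum estimate, which is typically more delicate than the base-$2$ case.
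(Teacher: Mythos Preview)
Your approach is genuinely different from the paper's, and while the outline is plausible, there is a real gap in the Diophantine input you invoke.

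The paper does not condition on base-$3$ residues at all. Instead it proves a \emph{joint digit equidistribution} lemma (Lemma~\ref{LeKey}): for any fixed finite collections of positions $k_1<\cdots<k_{d_1}$ in base~$2$ and $\ell_1<\cdots<\ell_{d_2}$ in base~$3$ (inside a truncated range), the tuple $\bigl(\varepsilon_{2,k_j}(3^Kn),\varepsilon_{3,\ell_j}(n)\bigr)$ is asymptotically uniform on $\{0,1\}^{d_1}\times\{0,1,2\}^{d_2}$ over $n<N$, $n\equiv i\bmod 2$, uniformly in $K\le c_2\log N$. From this, Proposition~\ref{Pro2} follows by the \emph{method of moments}: one compares the joint moments of $(\tilde s_2(3^Kn),\tilde s_3(n))$ with those of independent sums of i.i.d.\ digits, obtains moment convergence, hence weak convergence, hence convergence of characteristic functions. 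Your splitting $n=a+3^Lm$ and direct computation of the characteristic function is a legitimate alternative route that trades the moment machinery for a reduction to a marginal Gaussian along arithmetic progressions of modulus $3^{K+L}$.

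The gap is your claim that the required ``Gelfond-type cancellation estimate for $\sum_m e(b\cdot 3^{K+L}m/2^\lambda)$ rests on Baker's theorem''. Baker's theorem alone is \emph{not} what the paper uses for this, and it is not clear that it suffices. The relevant lower bound is on $\bigl\|2\alpha\bigr\|$ with $\alpha=3^K\sum_{j}h_j2^{-k_j-1}$; when the nearest integer $H$ to $2\alpha$ is nonzero (which certainly happens for $K$ of size $c_2\log N$), the paper controls $|2\alpha-H|$ via the $p$-adic \emph{subspace theorem} (Lemma~\ref{LeDio3}, applied in Section~\ref{subsectiond2=0}), not Baker. Even your reduced problem---the Gaussian for $s_2$ along an AP of modulus $3^{K+L}$---is exactly the case $d_2=0$ of the paper's exponential-sum lemma, and that case already requires Schlickewei's subspace theorem to handle the possibility that $3^K\cdot(\text{dyadic fraction})$ is abnormally close to an integer. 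So your sketch underestimates the Diophantine input: you will need the subspace theorem (or an equivalent device) in addition to Baker, and you should say where it enters. With that correction, your conditioning strategy should go through, and has the merit of isolating the base-$3$ contribution exactly rather than through moments.
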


\subsection{Plan of the paper.}

We will prove first that Propositions~\ref{prp_1} and~\ref{Pro2} imply the main theorems. 
In a short section we collect some Diophantine properties that will be then used
in the  subsequent two sections that are concerned with the proofs of Propositions~\ref{prp_1} and~\ref{Pro2}.
\begin{notation}
The symbol $\log$ denotes the natural logarithm,
and $\log_a=\frac 1{\log a}\log$ is the logarithm in base $a>1$.
We use Landau notation, employing the symbols $\LandauO$, $\ll$, and $o$.
The symbol $f(n)\asymp g(n)$ abbreviates the statement
$\bigl(f(n)\ll g(n)$ \textsf{and} $g(n)\ll f(n)\bigr)$,
while $f(n)\sim g(n)$ means that $f(n)/g(n)$ converges to $1$ as $n\rightarrow\infty$.
We also use the exponential $\e(x)=\exp(2\pi i x)$.
For $M\geq 0$, the statement ``$a$ is $M$-close to $b$'' means $\lvert a-b\rvert\leq M$.
\end{notation}

\section{Propositions~\ref{prp_1} and \ref{Pro2} imply Theorems~\ref{thm_main} and~\ref{thm_local}}\label{sec:2}.

We set $S_1^{(i)} = S_1^{(i)}(t_1,t_2)$ as in Propositions~\ref{prp_1} and \ref{Pro2}. Then we have
\[
\#\{ n < N : s_2(3^K n) = k_1, \ s_3(n) = k_2 \} 
= \iint_{[-1/2,1/2]^2}  \left( S_1^{(0)}(t_1,t_2) + S_1^{(1)}(t_1,t_2) \right) 
e(-t_1 k_1 - t_2 k_2) \, dt_1\, dt_2.
\]
Futhermore we set
\[
C_L^{(0)} = \left[ - \frac {L}{\sqrt{\log N}} , \frac {L}{\sqrt{\log N}}  \right]^2, \quad
C_L^{(1)} =   
\left[ - \frac {L}{\sqrt{\log N}} , \frac {L}{\sqrt{\log N}}  \right] \times
\left(  \left[ - \frac 12 , -\frac 12 + \frac {L}{\sqrt{\log N}}  \right] 
\cup  \left[ \frac 12 - \frac {L}{\sqrt{\log N}} , \frac 12  \right]   \right) 
\]
and
\[
A_L = \left[ - \frac 12, \frac 12 \right]^2 \setminus (C_L^{(0)} \cup C_L^{(1)}).
\]
By Proposition~\ref{prp_1} it directly follows that.
\[
I_1 = \iint_{A_L} |S_1^{(i)}(t_1,t_2)|\, dt_1\, dt_2 \ll N \frac{e^{-c L^2}}{\log N}.
\]

Next we apply Propositions~\ref{Pro2} and observe that for every $\varepsilon > 0$ there
exists $N_0 = N_0(\varepsilon)$ such that 
\[
\left| S_1^{(i)}(t_1,t_2) -  \frac N2\,  e\left(\frac{t_1}2 \log_2(3^K N) + t_2 \log_3(N) \right) 
\exp\left( -\frac {\pi^2}2  t_1^2\log_2(3^K N) - \frac{4\pi^2}3 t_2^2\log_3(N) 
\right)\right| \le \varepsilon N
\] 
for all $N\ge N_0$ and (uniformly) for all real $t_1,t_2$ and $i\in\{0,1\}$.
In order to calculate the integral 
\[
I_2 = \iint_{C_L^{(0)} \cup C_L^{(1)}} \left(S_1^{(0)}(t_1,t_2)+ S_1^{(1)}(t_1,t_2)\right) 
e(-t_1 k_1 - t_2 k_2)\, dt_1\, dt_2 
\]
we observe that (due to the fact that $s_3(n) \equiv n \bmod 2$) we have the relation
$S_1^{(i)}(t_1,t_2+1/2) = (-1)^i S_1^{(i)}(t_1,t_2)$ and consequently
\begin{align*}
\iint_{C_L^{(1)}} S_1^{(i)}(t_1,t_2) 
e(-t_1 k_1 - t_2 k_2)\, dt_1\, dt_2 
&= \iint_{C_L^{(0)}} S_1^{(i)}(t_1,t_2+1/2) 
e(-t_1 k_1 - (t_2+1/2) k_2)\, dt_1\, dt_2 \\
&= (-1){i+k_2} \iint_{C_L^{(0)}} S_1^{(i)}(t_1,t_2) 
e(-t_1 k_1 - t_2 k_2)\, dt_1\, dt_2
\end{align*}
Thus
\[
I_2 = 2 \iint_{C_L^{(0)}} S_1^{( k_2 \bmod 2)}(t_1,t_2) 
e(-t_1 k_1 - t_2 k_2)\, dt_1\, dt_2.
\]
Next we use the simple formula
\[
\int_{|t|\le C} e^{iAt- \frac{t^2}2 B}\, dt = 
\sqrt{\frac{2\pi}{B}} e^{-\frac{A^2}{2B}} + O\left( \frac 1{BC} e^{-\frac{C^2 B}2} \right)
\]
and Propostion~\ref{Pro2} and obtain
\begin{align*}
I_2 & = \frac N{\sqrt{\frac 23 \pi \log_2(3^KN) \log_3( N }}
\exp\left( - \frac{ 2 \Delta_1^2 }{\log_2(3^KN) } - \frac{ 3 \Delta_2^2}{ 4 \log_3 (N) } \right) \\
&+ O\left(  \frac N{L \log N} e^{ - c L^2 }   \right) + O\left( N \frac{L^2 \varepsilon}{\log N} \right)
\end{align*}
for some constant $c>0$, where
\[
\Delta_1 = k_1 - \frac 12 \log_2(3^K N) \quad\mbox{and}\quad
\Delta_2 = k_2 - \log_3(N).
\]
Finally we can choose 
\[
L = \lfloor  \sqrt{ (1/c) \log (1/\varepsilon) } \rfloor
\]
so that the error term sums up to 
\[
O\left( \frac{N}{\log N} \varepsilon \log (1/\varepsilon)   \right)
\]
for $N\ge N_0(\varepsilon)$.
This proves the first part of Theorem~\ref{thm_main}.

The proof of the second part is very similar. The only difference is that
we use $s_3(2^K n)$ to cover even $k_2$ and $s_3(2^K n+1)$ to cover odd $k_2$.

\bigskip

For the proof of Theorem+\ref{thm_local} we fix $\delta > 0$  and set 
\[
c_2 = \frac 1{\log 3}\left( \frac {2\log 2}{\delta \log 3} - 1  \right).
\]
With this parameter we apply Theorem~\ref{thm_main} and now choose $N_0$ large enough such
that the error term $o( (\log N)^{-1})$ in Theorem~\ref{thm_main} is negligible for
all $N\ge N_0$ compared to the main term
\[
T:= \frac N{\sqrt{2\pi \frac{1}{4}\log_{2} (N\, 3^K) } }
\exp\left(- \frac{\Delta_1^2} 
{\frac{1}{2}\log_{2} (N\, 3^K) } \right) 
\frac 1{\sqrt{2\pi \frac{2}{3}\log_{3} N } }
\exp\left(- \frac{\Delta_2^2}
{\frac{4}{3}\log_{3} N } \right)
\]
when $|\Delta_1| \le 1 + \frac {\log 3}{2\log 2}$ and $|\Delta_2| \le 1$. 
Furthermore we can assume that the main term $T$ is greater than $1$.

Now assume that $(k_1,k_2)$ is a pair of positive integers satisfying
\[
\frac{\log 3}{2 \log 2} \le \frac {k_1}{k_2} \le \frac 1\delta
\]
and 
\[
\max\{ k_1,k_2\} \ge K_0 := \frac{\log N_0}{\delta \log 3} + 1.
\]
We then choose $N\ge N_0$ and $0\le K \le c_2 \log N$ such that
\[
|\Delta_1| = |k_1 - \log_3 N| \le 1 \quad \mbox{and} \quad
|\Delta_2| = \left|k_2 - \frac 12 \log_2(3^K N)\right| \le 1 + \frac {\log 3}{2\log 2}.
\]
With this choice Theorem~\ref{thm_main} shows that 
\[
\# \{ n < N : s_2(3^K n) = k_1, \ s_3(n) = k_2 \} = T(1+o(1)) > 1.
\]
This proves Theorem~\ref{thm_local} 
in the case $k_1/k_2 \ge \log 3/(2 \log 2)$. The other case runs
along the same lines.

\section{Diophantine Properties}

The first property (Lemma~\ref{CorX}, compare also with
\cite{Drmota2001,DrmotaMauduitRivat2019twobases}) follows from Baker's theorem on linear forms of logarithms
(see \cite{Waldschmidt1993}).

\begin{lemma}\label{LeBaker}
Let $\alpha_1,\alpha_2,\ldots, \alpha_n$ be non-zero algebraic numbers
and $b_1,b_2,\ldots, b_n$ integers such that 
\[
\alpha_1^{b_1}\cdots \alpha_n^{b_n} \ne 1
\]
and let $A_1,A_2,\ldots,A_n\ge e$ real numbers with $\log A_j\ge h(\alpha_j)$,
where $h(\cdot)$ denotes the absolute logarithmic height. Set
$d = [{\bf Q}(\alpha_1\ldots,\alpha_n):{\bf Q}]$. Then
\[
\left|\alpha_1^{b_1}\cdots \alpha_n^{b_n} - 1 \right| \ge 
\exp\left( - U\right),
\]
where 
\[
U = 2^{6n+32} n^{3n+6} d^{n+2}(1 + \log d)(\log B + \log d)\log A_1\cdots \log A_n
\]
and
\[
B = \max\{2,|b_1|,|b_2|,\ldots,|b_n|\}.
\]
\end{lemma}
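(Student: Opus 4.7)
The statement is a quantitative version of Baker's theorem on linear forms in logarithms, in essentially the form made explicit by Waldschmidt in \cite{Waldschmidt1993}. My plan is therefore not to reprove it from scratch but to indicate the classical strategy and to invoke the cited reference for the explicit constants. The overall structure is a proof by contradiction: one sets $\Lambda = b_1\log\alpha_1 + \cdots + b_n\log\alpha_n$, with branches of the logarithm chosen so that an inequality $\lvert \alpha_1^{b_1}\cdots\alpha_n^{b_n} - 1\rvert < \exp(-U)$ translates into $\lvert\Lambda\rvert$ being extremely small, and then derives a contradiction with the non-degeneracy hypothesis $\alpha_1^{b_1}\cdots\alpha_n^{b_n}\neq 1$.

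The central tool, due to Baker, is an auxiliary function which is a polynomial of controlled bi-degree in the $n-1$ exponentials $\exp(z_j\log\alpha_j)$, with integer coefficients. Using Siegel's lemma, one chooses these coefficients so that the function and all its derivatives of order up to a fixed bound vanish on a large grid of integer points $(s_1,\ldots,s_{n-1})$; this is possible as soon as the number of unknowns comfortably exceeds the number of linear conditions, which is a matter of balancing $n$, $d$, the $\log A_j$, and $\log B$. The assumed upper bound on $\lvert\Lambda\rvert$ then propagates the smallness of the function from this initial grid to a strictly larger one, and an iterated Schwarz-lemma extrapolation (or a zero estimate of the Philippon type) eventually forces the function to vanish identically. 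Together with a multiplicity estimate for the monomials $\alpha_1^{s_1}\cdots\alpha_{n-1}^{s_{n-1}}$, this contradicts the non-vanishing assumption and produces an admissible exponent $U$ of exactly the shape stated in the lemma.

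The main obstacle is not conceptual but quantitative: one must choose the parameters (size of the interpolation grid, order of derivatives, number of extrapolation steps, precision at each step) so that the resulting constants balance cleanly and yield precisely the explicit $U = 2^{6n+32}n^{3n+6}d^{n+2}(1+\log d)(\log B+\log d)\log A_1\cdots\log A_n$. Since for the applications in the present paper only the existence of some effective lower bound with $U$ polynomial in $\log B$ and with reasonable dependence on the remaining parameters is required, the cleanest route is simply to invoke Waldschmidt's formulation in \cite{Waldschmidt1993}, which records exactly this version.
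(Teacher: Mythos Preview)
Your proposal is correct and aligns with the paper's treatment: the paper does not prove this lemma either, simply citing \cite{Waldschmidt1993} for Baker's theorem in this explicit form. Your sketch of the auxiliary-function/extrapolation strategy is accurate background, but strictly more than what the paper provides; both you and the authors ultimately defer to Waldschmidt for the stated constants.
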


\begin{lemma}\label{CorX}
Let $q_1,q_2>1$ be coprime integers and $m_1,m_2$ integers such that
$m_1\not\equiv 0 \bmod q_1$ and $m_2\not\equiv 0 \bmod q_2$. Then
there exists a constant $C>0$ such that for all positive integers $k_0,k_1,k_2>1$
\[
\left| \frac{m_1 q_2^{k_0}}{q_1^{k_1}}+\frac{m_2}{q_2^{k_2}}\right | 
\ge \max\left( \frac{|m_1|q_2^{k_0}}{q_1^{k_1}},\frac{|m_2|}{q_2^{k_2}} \right)\cdot
e^{- C \log q_1 \log q_2 \log \left(\max(k_1,k_0+k_2)\right)\cdot 
\log(\max\left(|m_1|,|m_2|\right))}.
\]
\end{lemma}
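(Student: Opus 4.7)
The plan is to reduce the claim to Baker's theorem (Lemma~\ref{LeBaker}) after factoring out the dominant summand. Writing
\[
\frac{m_1 q_2^{k_0}}{q_1^{k_1}}+\frac{m_2}{q_2^{k_2}} \;=\; \frac{m_1 q_2^{k_0}}{q_1^{k_1}}\bigl(1-R\bigr) \;=\; \frac{m_2}{q_2^{k_2}}\bigl(1-R'\bigr),
\]
with $R\coloneqq -\frac{m_2 q_1^{k_1}}{m_1 q_2^{k_0+k_2}}$ and $R'\coloneqq 1/R$, the task reduces to producing lower bounds of the required order for $|1-R|$ and $|1-R'|$: whichever of the two summands dominates determines the appropriate factorization, and the maximum on the right-hand side is inserted accordingly.

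I would apply Lemma~\ref{LeBaker} with $n=3$, $d=1$, $\alpha_1=-m_2/m_1$, $\alpha_2=q_1$, $\alpha_3=q_2$ and exponents $b_1=1$, $b_2=k_1$, $b_3=-(k_0+k_2)$, so that $\alpha_1^{b_1}\alpha_2^{b_2}\alpha_3^{b_3}=R$. The only non-routine step is verifying $R\neq 1$: if it held, then $-m_2 q_1^{k_1}=m_1 q_2^{k_0+k_2}$, and since $\gcd(q_1,q_2)=1$ and $k_0+k_2\geq 2$ this would force $q_2\mid m_2$, contradicting the hypothesis. Choose $\log A_1:=\max\bigl(1,\log\max(|m_1|,|m_2|)\bigr)$, which majorises the logarithmic height of $-m_2/m_1$, together with $\log A_2:=\log q_1$, $\log A_3:=\log q_2$, and $B:=\max(k_1,k_0+k_2)\geq 2$. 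Lemma~\ref{LeBaker} then yields
\[
|R-1|\;\geq\;\exp\bigl(-C'\log q_1\log q_2\log B\cdot \log\max(|m_1|,|m_2|)\bigr)
\]
for an absolute constant $C'$. The symmetric bound for $|R'-1|$ is obtained in the same way, the non-degeneracy $R'\neq 1$ now being ruled out by $q_1\nmid m_1$ together with $k_1\geq 2$. Multiplying each bound by the corresponding factored-out term and selecting whichever of the two factorizations is more favourable gives the claimed inequality.

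The main (and essentially only) obstacle is the non-degeneracy verification, which is a short divisibility argument using the coprimality of $q_1,q_2$ together with the hypotheses $q_1\nmid m_1$ and $q_2\nmid m_2$. Everything else is parameter bookkeeping: matching the quantity $n^{3n+6}d^{n+2}(\log B + \log d)\log A_1\log A_2\log A_3$ appearing in Lemma~\ref{LeBaker} against the target $\log q_1\log q_2\log(\max(k_1,k_0+k_2))\log\max(|m_1|,|m_2|)$, with all numerical factors absorbed into the final constant $C$.
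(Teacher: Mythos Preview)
Your proof is correct and follows essentially the same approach as the paper: factor out one of the two summands, apply Lemma~\ref{LeBaker} with $n=3$ to the resulting expression $\alpha_1^{b_1}\alpha_2^{b_2}\alpha_3^{b_3}-1$ (where $\{\alpha_1,\alpha_2,\alpha_3\}=\{q_1,q_2,-m_2/m_1\}$), and then repeat with the other summand to obtain the bound with the maximum. Your non-degeneracy check via the divisibility argument is a slightly more explicit version of the paper's one-line observation that the sum is nonzero.
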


\begin{proof}
Since $q_1,q_2>1$ are coprime integers and
$m_1\not\equiv 0 \bmod q_1$, $m_2\not\equiv 0 \bmod q_2$ we surely have
$m_1 q_1^{-k_1} q_2^{k_0} + m_2 q_2^{-k_2}\ne 0$. So can apply
Lemma~\ref{LeBaker} for $n=3$, $\alpha_1 = q_1$, $\alpha_2=q_2$,
$\alpha_3 = -m_2/m_1$, $b_1 = k_1$, $b_2 = -k_2$, $b_3 = 1$ and directly obtain
\begin{align*}
\left| \frac{m_1q_2^{k_0}}{q_1^{k_1}}+\frac{m_2}{q_2^{k_2}}\right| &=
|m_1|\cdot q_1^{-k_1}q_2^{k_0}\cdot\left|-q_1^{k_1}q_2^{-k_0-k_2}\frac {m_2}{m_1} - 1\right| \\
&\ge |m_1|q_1^{-k_1}q_2^{k_0}e^{- C \log q_1 \log q_2 \log \left(\max(k_1,k_0+k_2)\right)\cdot 
\log\max\left(|m_1|,|m_2|\right)}.
\end{align*}
In the same way we get the lower bound
\[
\left| \frac{m_1q_2^{k_0}}{q_1^{k_1}}+\frac{m_2}{q_2^{k_2}}\right| 
\ge |m_2|q_2^{-k_2}e^{- C \log q_1 \log q_2 \log \left(\max(k_1,k_0+k_2)\right)\cdot 
\log\max\left(|m_1|,|m_2|\right)}
\]
which completes the proof of the lemma. 
\end{proof}

The second one follows from the 
$p$-adic version of the subspace theorem by Schlickewei \cite[Theorem~1.8]{Evertse1993}.

\begin{lemma}\label{LeDio2}
Let $r \ge n \ge 2$, $C > 0$, $\delta > 0$ and $S = \{\infty,p_1,\ldots,p_t\}$, 
where $p_1,\ldots,p_t$ are distinct prime numbers. Further, 
let $L_{1,\infty},\ldots,L_{r,\infty}$ be linear forms in $X_1,\ldots,X_n$ with algebraic coefficients 
in $\mathbb{C}$ in general position, and for $1\le j\le t$, let $L_{1,p_j},\ldots,L_{r,p_j}$ 
be linear forms in $X_1,\ldots,X_n$ with algebraic coefficients in $\overline {\mathbb{Q}}_{p_j}$ in general position. 

Then all integer solutions ${\bf x} = (x_1,\ldots,x_n)$ with ${\rm gcd}(x_1,\ldots,x_n) = 1$ of the 
inequality
\begin{equation}\label{eqLeDio2}
\prod_{p\in S} \left| L_{1,p}({\bf x}) \cdots L_{r,p}({\bf x}) \right|_p \le C \| {\bf x} \|_\infty^{r-n-\delta}
\end{equation}
are contained in the union of finitely many linear subspaces of ${\mathbb{Q}}^n$.
\end{lemma}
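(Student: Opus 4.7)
The statement recorded here is the $p$-adic subspace theorem in the form due to Schlickewei, and the plan is to invoke it directly from Evertse's exposition \cite[Theorem~1.8]{Evertse1993} rather than to reprove it from scratch. A full proof lies deep in Diophantine approximation and would consume more space than the remainder of this paper; what follows is only a sketch of the classical architecture, for orientation.

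The archetype is Schmidt's subspace theorem at the archimedean place, whose proof rests on three pillars. First, a geometry-of-numbers reduction via Minkowski's theorem on successive minima converts the approximation inequality \eqref{eqLeDio2} into a statement about lattice points in twisted parallelepipeds attached to the linear forms $L_{i,p}$. Second, one constructs an \emph{auxiliary polynomial} of very high degree in many variables, vanishing to prescribed order at a first approximating rational point; this uses a Siegel-style pigeonhole argument together with bounds on the heights of the forms. Third, a Roth-style \emph{index lemma} (Schmidt's multidimensional generalization of the Dyson--Roth lemma) shows that the index of this polynomial at a second, independent approximating point cannot be too large. Combining the second and third steps forces all sufficiently good approximating points to lie in a proper linear subspace, and a compactness-type argument shows the subspace depends only on the data, yielding finiteness.

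The passage from Schmidt's archimedean theorem to the $p$-adic version of Schlickewei requires replicating the entire argument simultaneously at the places $\infty,p_1,\ldots,p_t$, working with $S$-integer lattices and adelic successive minima. I expect the main obstacle to be the index lemma: Schmidt's multidimensional Roth lemma is combinatorially delicate, and its $p$-adic adaptation requires careful bookkeeping of the $v$-adic valuations of Wronskian-type determinants attached to the auxiliary polynomial. The reductions to general-position forms and the conversion of \eqref{eqLeDio2} to a ``very good approximation'' statement are by now standard manipulations and can be read off from \cite{Evertse1993}.
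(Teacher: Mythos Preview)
Your approach is exactly what the paper does: the lemma is not proved in the paper but is cited verbatim as Schlickewei's $p$-adic subspace theorem from \cite[Theorem~1.8]{Evertse1993}. Your additional sketch of the Schmidt--Schlickewei architecture is accurate background but not needed for the paper's purposes.
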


The following two properties are corollaries of Lemma~\ref{LeDio2}.

\begin{lemma}\label{LeDio3}
Suppose that $q_1,q_2$ are different prime numbers, that $h_1,\ldots, h_d$ are $d\ge 1$ integers not
divisible by $q_2$ and $H$ an integer not divisible by $q_1$ such that
\[
{\rm gcd}(h_1,\ldots, h_d, H) = 1.
\]
Then for every $\delta > 0$ there exists $M_0$ such that we have uniformly for all integer exponents 
$k$, $m_1, \ldots, m_d$, $m$
with $k\ge 0$, $m_1 > m_2 > \cdots > m_d = 0$ and  $m \ge \max(m_1, M_0)$ the inequality
\begin{equation}\label{eqLeDio3}
\left| q_1^{k} \left( q_2^{m_1} h_1 + \cdots +  q_2^{m_d} h_d \right) - q_2^m H  \right|
\gg \frac{ \max ( q_1^{k} q_2^{m_1}, q_2^m |H| )^{1-\delta} }{|h_1h_2 \cdots h_d\, H| },
\end{equation}
where the implicit constant depends just on $\delta$ and $d$.
\end{lemma}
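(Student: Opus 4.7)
The plan is to deduce the lemma from the $p$-adic subspace theorem (Lemma~\ref{LeDio2}) applied to the integer point
\[
\mathbf{x} \eqdef \bigl( h_1 q_1^k q_2^{m_1},\, h_2 q_1^k q_2^{m_2},\, \ldots,\, h_d q_1^k q_2^{m_d},\, H q_2^m \bigr) \in \mathbb{Z}^{d+1}.
\]
First I would check that $\mathbf{x}$ is primitive: the last coordinate is coprime to $q_1$ because $q_1\nmid H$; the $d$-th coordinate $h_d q_1^k$ is coprime to $q_2$ because $m_d=0$ and $q_2\nmid h_d$; and at any other prime the hypothesis $\gcd(h_1,\ldots,h_d,H)=1$ guarantees that some coordinate has trivial valuation.

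At each place of $S\eqdef\{\infty,q_1,q_2\}$ I take the $r=d+2$ linear forms
\[
L_0(X_1,\ldots,X_{d+1})=X_1+X_2+\cdots+X_d-X_{d+1},\qquad L_i(X)=X_i\quad(1\le i\le d+1),
\]
which are easily seen to be in general position, and whose crucial feature is $L_0(\mathbf{x})=E$, with $E$ the left-hand side of \eqref{eqLeDio3}. A short computation using $q_2\nmid h_i$, $q_1\nmid H$, and the product formula (so in particular $|E|_{q_1}|E|_{q_2}\le 1$ for the integer $E$) yields
\[
\prod_{p\in S}\bigl|L_0(\mathbf{x})L_1(\mathbf{x})\cdots L_{d+1}(\mathbf{x})\bigr|_p\le |E|\cdot|h_1 h_2\cdots h_d H|.
\]
Assume for contradiction that the conclusion of the lemma fails. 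Then for infinitely many parameter tuples $|E|\cdot|h_1\cdots h_d H|\le \max(q_1^k q_2^{m_1},q_2^m|H|)^{1-\delta}\le \|\mathbf{x}\|_\infty^{1-\delta}$, which matches the hypothesis of Lemma~\ref{LeDio2} since $r-n=1$. Consequently, all such $\mathbf{x}$ lie in a finite union of proper linear subspaces of $\mathbb{Q}^{d+1}$.

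Each exceptional subspace is cut out by a rational relation $\sum_{i=1}^{d+1}c_i x_i=0$ with a fixed nonzero vector $(c_i)$. I would rule these out by a case split on $c_{d+1}$. If $c_{d+1}=0$, the relation reduces to $\sum_{i=1}^{d}c_i h_i q_2^{m_i}=0$; comparing $q_2$-adic valuations and exploiting $m_d=0$, $q_2\nmid h_d$, pins $(m_1,\ldots,m_d)$ down to a finite set, after which the residual two-term expression $q_1^k A-q_2^m H$ (with $A=\sum_i h_i q_2^{m_i}$ in a finite set) is bounded below by Baker's theorem via Lemma~\ref{CorX}. If $c_{d+1}\ne 0$, an analogous $q_2$-adic valuation analysis of $c_{d+1}H q_2^m=-q_1^k\sum_i c_i h_i q_2^{m_i}$ either pins $m$ to a fixed value (which contradicts $m\ge M_0$ once $M_0$ is large enough) or forces $m_{d-1}$, and iteratively the remaining $m_i$'s, to be bounded, reducing the problem to fewer free exponents and ultimately to the $d=1$ case, where Lemma~\ref{CorX} again applies.

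The main technical obstacle is this final case analysis: although the subspace theorem produces only finitely many exceptional vectors $(c_1,\ldots,c_{d+1})$, each one must be unwound by careful $q_2$-adic bookkeeping in order to extract an explicit threshold $M_0$ (depending on $\delta,d,h_i,H,q_1,q_2$) beyond which no exceptional tuple with $m\ge M_0$ survives. Since the constants and the number $d+2$ of forms entering Lemma~\ref{LeDio2} depend only on $\delta$, $d$, and the fixed primes $q_1,q_2$, the dependence on $h_i,H$ in the final bound is completely absorbed into the explicit denominator $|h_1\cdots h_d H|$, giving an implicit constant that depends only on $\delta$ and $d$ as claimed.
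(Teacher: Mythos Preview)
Your setup is essentially the paper's: same primitive point $\mathbf{x}$, same $d+2$ forms at $S=\{\infty,q_1,q_2\}$, same product estimate leading to the subspace theorem. The divergence, and the gap, is in the treatment of the exceptional subspaces.

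In your $c_{d+1}=0$ branch you assert that $\sum_{i=1}^d c_i h_i q_2^{m_i}=0$ ``pins $(m_1,\ldots,m_d)$ down to a finite set''. This is false as soon as some $c_i$ with $1\le i\le d$ vanishes: if, say, $c_1=0$, the relation says nothing about $m_1$, and the residual quantity $q_1^k\bigl(h_1q_2^{m_1}+A\bigr)-q_2^mH$ still carries the three free exponents $k,m_1,m$, so it is not a two--term expression to which Lemma~\ref{CorX} applies. Even when all $c_i\ne 0$, your valuation cascade only bounds the successive \emph{gaps} $m_{j-1}-m_j$, not the $m_j$ themselves unless you also know $c_d\ne 0$ (so that the cascade is anchored at $m_d=0$); you implicitly assume this by invoking ``$m_d=0$, $q_2\nmid h_d$'', but never address $c_d=0$.

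The paper avoids this by not trying to pin down the $m_i$ at all. Its argument is: for each of the finitely many exceptional hyperplanes, take the largest index $j\in\{1,\ldots,d\}$ with $c_j\ne 0$ and reduce $\sum c_i x_i=0$ modulo $q_2^{m_{j-1}}$ (using $m\ge m_1$) to get $m_{j-1}-m_j\le v_{q_2}(c_j)$. Hence there is a constant $C=C(\delta)$ such that if all gaps satisfy $m_{j-1}-m_j\ge C$ (and $m_{d-1}\ge C$), the point $\mathbf{x}$ lies in no exceptional subspace and \eqref{eqLeDio3} follows directly. If some gap is small, one \emph{groups} the two adjacent terms $q_1^kq_2^{m_j}\bigl(q_2^{m_{j-1}-m_j}h_{j-1}+h_j\bigr)$ into a single term and recurses to $d-1$; the product $|h_{j-1}h_j|$ absorbs the new coefficient, which is what makes the denominator $|h_1\cdots h_d H|$ appear. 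The base case $d=1$ is handled by the subspace theorem itself (for $m\ge M_0$ no exceptional hyperplane is hit), so Baker is never needed.

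In short: your main engine is correct and matches the paper, but your exceptional--subspace analysis should be replaced by the ``bounded gap $\Rightarrow$ group and induct on $d$'' mechanism; your detour through Lemma~\ref{CorX} is both unnecessary and, as written, does not close the case $c_i=0$ for some $i\le d$.
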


\begin{proof}
We apply Lemma~\ref{LeDio2} with $r=d+2$, $n=d+1$, $C=1$, $\delta$, and the set $S$ that consists of $\infty$ and of
the primes $q_1$ and $q_2$. For $p\in S$ we set 
\begin{displaymath}
  L_{j,p} = X_j,\ (1\le j\le d+1),\ 
  L_{d+2,p} = X_1+X_2+\cdots + X_{d+1}, 
\end{displaymath}
that are obviously in general position. Hence for all coprime integer
tuples ${\bf x} = (x_1,x_2,\ldots x_{d+1})$ we either have
\begin{equation}\label{eq:linear-form-upperbound}
\prod_{p\in S} \left| (x_1+x_2+\cdots + x_{d+1}) x_1x_2 \cdots x_{d+1} \right|_p 
\ge \| {\bf x} \|_\infty^{1-\delta}
\end{equation}
or they are contained in finitely many linear subspaces of ${\mathbb{Q}}^{d+1}$.

We now set
\[
x_j = q_1^{k} q_2^{m_j} h_j \ (1\le j \le d), \ x_{d+1} = q_2^m H.
\]
Clearly we have
\[
\| {\bf x} \|_\infty \ge \max\left(  q_1^{k} q_2^{m_1}, q_2^m |H| \right). 
\]
By assumption $m_d = 0$ and $h_d$ is not divisible by $q_2$. Furthermore $H$ is not
divisible by $q_1$. Consequently we have
\[
{\rm gcd} \left( q_1^{k} q_2^{m_1} h_1, \ldots, q_1^{k} q_2^{m_d} h_d, q_2^m H  \right)
= {\rm gcd} \left(  h_1, \ldots,  h_d,  H  \right) = 1.
\]

Suppose now that $c_1 x_1 + \cdots c_d x_d + c_{d+1} x_{d+1} = 0$ is one (of finitely many) equations
for the exceptional rational subspaces, that is, we can assume that the coefficients $c_j$ are
integers and not all of them are zero. Actually, since all $x_j$ are non-zero we can assume that
at least two coefficients $c_j$ are non-zero. In particular this implies that at least one of
the coefficients $c_1,\ldots, c_d$ is non-zero. 

Suppose first that $d=1$. Then $c_1 \ne 0$ and $m_1 = 0$ and by considering the equation 
\[
c_1 q_1^{k}h_1  + c_2 q_2^m H = 0
\]
modulo $q_2^{m}$ we get
\[
c_1 \equiv 0 \bmod q_2^m.
\]
If $M_0$ is chosen large enough  such that this relation is impossible for $m\ge M_0$ 
then there are no points of this form on one of the finitely many subspaces.

Next assume that $d> 1$ and that $c_d \ne 0$. Here we consider the
equation 
\[
c_1 q_1^{k} q_2^{m_1} h_1 + \cdots + c_d  q_1^{k} q_2^{m_d} h_d + c_{d+1} q_2^m H = 0
\]
modulo $q_2^{m_{d-1}}$. Since $m \ge m_1 > \cdots > m_{d-1} > 0$, $h_d$ is not divisibly by $q_2$, and $q_1$ and $q_2$ are different
prime numbers it follows that
\[
c_d \equiv 0 \bmod q_2^{m_{d-1}}. 
\]
If $m_{d-1}$ is sufficiently large this is certainly impossible. 

Similarly as above, if $c_d=0$ but $c_{d-1} \ne 0$ we get
\[
c_{d-1} q_2^{m_{d-1}} \equiv 0 \bmod q_2^{m_{d-2}} \quad\mbox{or} \quad c_{d-1} \equiv 0 \bmod q_2^{m_{d-2}-m_{d-1}}.
\]
Again this is impossible if $m_{d-2}-m_{d-1}$ is sufficiently large. 

In this way we proceed further and observe that there exists a constant $C > 0$ (depending on $\delta$)
such that $(x_1,\ldots,x_d, x_{d+1})$ is not contained in any of the exceptional subspaces provided that
$m_{d-1} \ge C$, $m_{d-2}-m_{d-1} \ge C$, $\ldots$, and $m_{2}- m_1 \ge C$. 
In all these cases the inequality (\ref{eq:linear-form-upperbound}) is satisfied.

By definition we have
\begin{displaymath}
  \prod_{p\in S}
  | q_1^{\ell_1 + \cdots + \ell_d} q_2^{m_1+ \cdots m_d + m} |_p   = 1
\end{displaymath}
so that
\begin{displaymath}
  \prod_{p\in S} |x_1 \cdots x_{d+1} |_p
  = \prod_{p\in S} | h_1 \cdots h_d\, H |_p
  \le |h_1 \cdots h_d\, H | .
\end{displaymath}
Furthermore with $|x_1+\cdots + x_{d+1}|_p \le 1$
for $p \ne \infty$ it 
follows that \eqref{eq:linear-form-upperbound} implies
\begin{align*}
|x_1+\cdots + x_{d+1}|\, |h_1 \cdots h_d\, H | &\ge   \prod_{p\in S\setminus\{\infty\}}
 |x_1+ \cdots x_{d+1}|_p |x_1|_p \cdots |x_{d+1} |_p \\
& \ge   \max (|x_1|, \ldots |x_{d+1}|)^{1-\delta} \\
&\ge \max \left( q_1^{k} q_2^{m_1}, q_2^m |H|  \right)^{1-\delta} 
\end{align*}
which is precisely (\ref{eqLeDio3}).

Now suppose that $m_{d-1}\le C$. Then we put the terms
\[
q_1^{k} q_2^{m_{d-1}} h_{d-1} + q_1^{k} h_d = q_1^k \left( q_2^{m_{d-1}} h_{d-1} +  h_d    \right) 
\]
together and apply inductively the lemma for the case $d-1$. Since $q_2^{m_{d-1}}\le q_2^C$ is bounded
we have
\[
\left| q_2^{m_{d-1}} h_{d-1} +  h_d \right| \ll |h_{d-1} h_d|.
\]
Thus, (\ref{eqLeDio3}) follows by induction.

Similarly, if $m_{d-2}-m_{d-1} \ge C$ then we group together the terms
\[
q_1^{k} q_2^{m_{d-2}} h_{d-2} + q_1^{k} q_2^{m_{d-1}} h_{d-1} = q_1^k   q_2^{m_{d-1}}
\left( q_2^{m_{d-2}- m_{d-1}} h_{d-2} +  h_{d-1}    \right) 
\]
and proceed in the same way. The remaining cases can be handled, too.
However, we have to take care of the maximum $\max ( q_1^{k} q_2^{m_1}, q_2^m |H| )$
if we group together
\[
q_1^k q_2^{m_1} h_1 + q_1^k q_2^{m_2} h_2 = q_1^k q_2^{m_2} \left( q_2^{m_1-m_2} h_1 + h_2 \right).
\]
Since $m_1-m_2 \le C$ we get
\[
q_1^k q_2^{m_2} \ge \frac 1{q_2^C} q_1^k q_2^{m_1}.
\]
Thus, we can proceed by induction in all cases.

Summing up, we either get (\ref{eqLeDio3}) directly or we reduce it to the case $d-1$.
Since the case $d=1$ always holds (provided that $m$ is sufficiently large) the proof
of the lemma is finished.
\end{proof}

\begin{lemma}\label{LeDio4}
Suppose that $q_1,q_2$ are different prime numbers, that $h_1,\ldots, h_{d_1}$ are $d_1\ge 1$ integers not
divisible by $q_2$, $r_1,\ldots, r_{d_2}$ are $d_2\ge 1$ integers not
divisible by $q_1$ and $H$ an integer not divisible by $q_1q_2$  such that
\[
{\rm gcd}(h_1,\ldots, h_{d_1}, r_1, \ldots, r_{d_2}, H) = 1.
\]
Then for every $\delta > 0$ there exists $M_0$ such that we have uniformly for all integer exponents 
$k$, $m_1, \ldots, m_{d_1}$, $M$,  $n_1, \ldots, n_{d_2}$, $N$
with $k\ge 0$, $m_1 > m_2 > \cdots > m_{d_1} = 0$, $M \ge \max(m_1, M_0)$,  
$n_1 > n_2 > \cdots > n_{d_2} = 0$, and  $N \ge \max(n_1, M_0)$ the inequality
\begin{align}
&\left| q_1^{k} \left( q_2^{m_1} h_1 + \cdots +  q_2^{m_{d_1}}  h_{d_1} \right) +  
q_1^{n_1} r_1 + \cdots +  q_1^{n_{d_2}}  r_{d_2}  - q_1^N q_2^M H  \right| \nonumber \\
& \gg \frac{ \max ( q_1^{k} q_2^{m_1}, q_1^{n_1}, q_1^N q_2^M |H| )^{1-\delta} }
{|h_1h_2 \cdots h_{d_1}r_1r_2 \cdots r_{d_2}\, H| },  \label{eqLeDio4}
\end{align}
where the implicit constant depends just on $\delta$, $d_1$, and $d_2$.
\end{lemma}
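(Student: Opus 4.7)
The plan is to follow the same template as the proof of Lemma~\ref{LeDio3}, with one extra layer of case analysis to accommodate the two distinct ``blocks'' of varying exponents (the $q_2$-block coming from $m_1>\cdots>m_{d_1}$ and the $q_1$-block coming from $n_1>\cdots>n_{d_2}$). Setting $n=d_1+d_2+1$ and
\[
x_j = q_1^{k} q_2^{m_j} h_j\ (1\le j\le d_1),\qquad
x_{d_1+i} = q_1^{n_i} r_i\ (1\le i\le d_2),\qquad
x_{n} = -q_1^N q_2^M H,
\]
the left-hand side of \eqref{eqLeDio4} equals $|x_1+\cdots+x_{n}|$. The divisibility hypotheses on $h_{d_1}$, $r_{d_2}$ and $H$, combined with the global gcd condition, ensure that $\gcd(x_1,\ldots,x_{n})=1$, and a routine computation of $p$-adic valuations at $p\in S\eqdef\{\infty,q_1,q_2\}$ gives $\prod_{p\in S}|x_1\cdots x_{n}|_p \le |h_1\cdots h_{d_1} r_1\cdots r_{d_2} H|$.

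I would then invoke Lemma~\ref{LeDio2} with $r=n+1$, with the prescribed $\delta$, and with the same linear forms as in the proof of Lemma~\ref{LeDio3}, namely $L_{j,p}=X_j$ for $1\le j\le n$ and $L_{n+1,p}=X_1+\cdots+X_n$ for every $p\in S$. Outside the finitely many exceptional subspaces produced by the subspace theorem, the estimate
\[
|x_1+\cdots+x_n|\cdot |h_1\cdots h_{d_1} r_1\cdots r_{d_2} H|\ \gg\ \|\mathbf{x}\|_\infty^{1-\delta}\ \ge\ \max\bigl(q_1^{k}q_2^{m_1},\,q_1^{n_1},\,q_1^N q_2^M|H|\bigr)^{1-\delta}
\]
follows at once, and this is precisely \eqref{eqLeDio4}.

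The hard part is, as in Lemma~\ref{LeDio3}, ruling out the exceptional subspaces. Fix one such relation $c_1 x_1+\cdots+c_n x_n = 0$ with a non-zero integer tuple $(c_j)$ from the finite list produced by the subspace theorem. I would split according to which block contributes. If the largest $j_0\le d_1$ with $c_{j_0}\ne 0$ lies in the first block, reducing the relation modulo $q_2^{m_{j_0-1}}$ (or modulo $q_2^{M_0}$ when $j_0=1$) forces $c_{j_0}$ to be divisible by an arbitrarily large power of $q_2$ unless the gap $m_{j_0-1}-m_{j_0}$ is bounded by some constant $C=C(\delta,d_1,d_2)$; here one uses that every other term either has $q_2$-valuation at least $m_{j_0-1}$ (the $x_j$ with $j<j_0$ and the $x_n$ term, since $M\ge m_1$) or has bounded $q_2$-valuation controlled by the fixed $r_i$. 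The symmetric argument using reductions modulo powers of $q_1$ --- exploiting $v_{q_1}(x_{d_1+i})=n_i$ because $q_1\nmid r_i$, and $v_{q_1}(x_n)=N\ge n_1$ --- handles the case where the largest non-zero $c_{j}$ lies in the second block.

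When some gap is not sufficiently large, I group the two corresponding consecutive terms, e.g.\
\[
q_1^{k}q_2^{m_{j-1}}h_{j-1}+q_1^{k}q_2^{m_j}h_j = q_1^{k}q_2^{m_j}\bigl(q_2^{m_{j-1}-m_j}h_{j-1}+h_j\bigr)
\]
in the first block and the analogous identity in the second, and reduce to the inductive hypothesis with $(d_1{-}1,d_2)$ or $(d_1,d_2{-}1)$. The merged coefficient is $\ll |h_{j-1}h_j|$ (respectively $\ll |r_{i-1}r_i|$) and the relevant maximum $\max(q_1^{k}q_2^{m_1},q_1^{n_1},q_1^N q_2^M|H|)$ changes by at most a bounded multiplicative factor, so \eqref{eqLeDio4} is preserved. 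The base case $d_1=d_2=1$ is then reduced to Lemma~\ref{LeDio3}. The main obstacle I anticipate is the bookkeeping in the mixed situation where an exceptional relation has non-zero coefficients in both blocks \emph{and} a non-zero coefficient of $x_n$: there one must combine the two modular reductions coherently so that the induction on $d_1+d_2$ still closes, but once the right power $q_2^{m_{j_0-1}}$ or $q_1^{n_{i_0-1}}$ is identified, the argument is the same as in Lemma~\ref{LeDio3}.
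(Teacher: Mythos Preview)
Your proposal is correct and takes essentially the same approach as the paper, which gives no details beyond stating that the proof of Lemma~\ref{LeDio4} is a direct extension of the proof of Lemma~\ref{LeDio3}. You have in fact supplied considerably more detail than the paper itself---correctly setting up the application of the subspace theorem with the enlarged coordinate vector, the coprimality check, the $S$-adic product bound, and the induction on $d_1+d_2$ via merging of adjacent terms---and you have also correctly flagged the one genuinely new ingredient, namely the bookkeeping for exceptional subspaces mixing coefficients from both blocks.
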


\begin{proof}
The proof of Lemma~\ref{LeDio4} is a direct extension of the proof of Lemma~\ref{LeDio3}.
\end{proof}

%

\section{Proof of Proposition~\ref{prp_1}}
For an integer $M\ge1$, let $L(M)$ denote the length of the longest block of $0$s or $1$s in the binary expansion of $M$.
The following lemma states that almost all powers of $3$ only have short runs of $0$s or $1$s, where also multiplication by a factor $M$ is taken into account.
This inconspicious lemma is in fact the key to the proof of Proposition~\ref{prp_1}, as it enables us to eliminate binary digits of powers of $3$ with indices lying in an interval (Corollary~\ref{cor_odd} below).
\begin{lemma}\label{lem_not_too_many_digits}
Assume that $0\leq\eta\leq1$.
Then
\begin{equation}\label{eqn_small_o}
\sup_{1\leq M<2^{\eta K}}
L\bigl(M3^K\bigr)
\leq \eta K+o(K)
\end{equation}
as $K\rightarrow\infty$.
In particular,
\begin{itemize}
\item[(1.)]
the longest $0$-or $1$-blocks in the binary expansion of $3^K$ have length $o(K)$ as $K\to\infty$.
\item[(2.)] For given $\varepsilon>0$ and $\eta>0$, all sufficiently large $K$ satisfy
\begin{equation*}
\sup_{1\leq M<2^{\eta K}}
L\bigl(M3^K\bigr)
\leq (1+\varepsilon)\eta K.
\end{equation*}
\end{itemize}
\end{lemma}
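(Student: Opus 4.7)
The plan is to translate the existence of a long binary block in $M3^K$ into a linear-forms-in-two-bases inequality, and then to apply Lemma~\ref{LeDio3}. A direct application of Baker's theorem via Lemma~\ref{LeBaker} would be too lossy: the integer $N\approx M3^K/2^{j+L}$ appears as a third algebraic number of height $\sim K$, producing only the useless bound $L=O(K\log K)$.

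First, suppose $L\bigl(M3^K\bigr)\ge L$, so the binary expansion of $M3^K$ has $L$ equal bits at positions $j,\ldots,j+L-1$. A short case split ($0$-block vs.\ $1$-block) yields an integer $N\ge 1$ with
\[
\bigl|M3^K-N\cdot 2^{j+L}\bigr|\le 2^j.
\]
The left-hand side is nonzero as soon as $L>\eta K$, since $M<2^{\eta K}$ and the fact that $3^K$ is odd force $M3^K$ to have at most $\eta K$ trailing zeros in binary. Since the conclusion $L\le\eta K+o(K)$ is trivial when $L\le\eta K$, I henceforth assume $L\ge(\eta+\varepsilon)K$ for a fixed $\varepsilon>0$.

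Next I would clear common factors so that the coprimality hypotheses of Lemma~\ref{LeDio3} are met. Write $M=2^a\tilde M$ with $\tilde M$ odd and $N=3^b\tilde N$ with $\gcd(\tilde N,3)=1$, and set $g=\gcd(\tilde M,\tilde N)$, $\tilde M=gm'$, $\tilde N=gn'$. Then $m'$ is odd, $n'$ is coprime to $3$, $\gcd(m',n')=1$, and
\[
\bigl|M3^K-N\cdot 2^{j+L}\bigr|=2^a 3^b g\cdot\bigl|m'3^{K-b}-n'2^{j+L-a}\bigr|.
\]
Lemma~\ref{LeDio3}, applied with $q_1=3$, $q_2=2$, $d=1$, $h_1=m'$, $H=n'$, $k=K-b$, $m=j+L-a$ (the condition $m\ge M_0$ holds for $K$ large, because $a\le\eta K<L$), then yields
\[
\bigl|m'3^{K-b}-n'2^{j+L-a}\bigr|\gg_\delta\frac{\max\bigl(3^{K-b}m',\,2^{j+L-a}n'\bigr)^{1-\delta}}{m'n'}.
\]

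Finally, I combine the two inequalities. Keeping the term $2^{j+L-a}n'$ inside the maximum and passing to $\log_2$, together with the trivial estimates $\log_2 M\le\eta K$, $\log_2 N\le K\log_2 3+O(1)$, and $j+L\le\log_2(M3^K)+O(1)$, a direct computation (in which the nonpositive terms $-b\log_2 3$ and $-\log_2 g$ on the right are discarded) produces
\[
L\le\eta K+\delta K\bigl(\eta+2\log_2 3\bigr)+O_\delta(1).
\]
For any $\varepsilon>0$, choosing $\delta$ so that the $\delta$-term is below $\tfrac{1}{2}\varepsilon K$ and then $K$ large enough to absorb $O_\delta(1)$, we obtain $L\le(\eta+\varepsilon)K$ uniformly in $M$. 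This proves $L(M3^K)\le\eta K+o(K)$; the consequences~(1.)~and~(2.) then follow at once (take $M=1$ and let $\eta\to 0$ for~(1.), and unwind the definition of $o(K)$ for~(2.)). The principal obstacle is the bookkeeping involved in peeling off the $2$-, $3$-, and residual common factors from $M$ and $N$; once this is carried out, the subspace-theorem saving $\max(\cdot)^{-\delta}$ from Lemma~\ref{LeDio3} is exactly strong enough to defeat the $2^j$ on the left.
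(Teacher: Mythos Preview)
Your argument is correct and follows essentially the same route as the paper: both reduce a long block in $M3^K$ to an inequality $\lvert M3^K - N\cdot 2^{j+L}\rvert\le 2^j$ and then feed this into the $p$-adic subspace theorem, the only difference being that the paper invokes Lemma~\ref{LeDio2} directly (handling the exceptional subspace via a short congruence mod $2^{k+L}$) while you route through the pre-packaged Lemma~\ref{LeDio3}, which is why you need the gcd bookkeeping. One small omission: you should also check that the exponent $k=K-b$ is nonnegative, but this follows easily since $3^b\le N\le 2M3^K/2^{j+L}<2^{\eta K+1-L}\cdot 3^K<3^K$ under your standing assumption $L\ge(\eta+\varepsilon)K$; also note that Lemma~\ref{LeDio3} gives $\max(3^{K-b},\,2^{j+L-a}n')$ rather than $\max(3^{K-b}m',\,\cdot\,)$, though this is harmless as you only use the second entry.
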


\begin{proof}
The proof is an application of Schlickewei's $p$-adic subspace theorem.
Suppose that the binary expansion of $M3^K$ has a $0$-block of length $L$.
Then $M3^K$ can be represented as 
\[
M3^K = a + 2^{k+L}b,
\]
where $0 <  a\le 2^k$ and $0<b\le M3^K 2^{-k-L}$. Hence, 
\[
|M3^K - 2^{k+L}b| \le 2^k.
\]
On the other hand we have (by a direct application of the $p$-adic subspace theorem, see below)
\begin{equation}\label{eqlowerbound1}
|M3^K - 2^{k+L}b| \ge \frac{\max(3^K,2^{k+L}b)^{1-\delta}}{Mb} \ge \frac{(2^{k+L}b)^{1-\delta}}{Mb}
\end{equation}
or we have 
\[
c_1 M3^K + c_2 2^{k+L}b = 0
\]
for one (or several) of finitely many integer pairs $(c_1,c_2)\ne (0,0)$ that depend on $\delta$. 
However, by considering such an equation modulo $2^{k+L}$ 
it follows that
\[
c_1 \equiv 0 \bmod 2^{k+L}
\]
which is impossible if $k+L$ is sufficiently large. 

Thus, if $k+L$ is sufficiently large we certainly
have (\ref{eqlowerbound1}). Consequently we have
\[
2^k \ge \frac{(2^{k+L}b)^{1-\delta}}{Mb}
\]
or
\[
2^L \le M^{\frac1{1-\delta}}(2^k b)^{ \frac {\delta}{1-\delta} }.
\]
Since $2^k b \le M3^K$ and $M\leq 2^{K\eta}$ it also follows that
\[
L \le \biggl(\frac{\eta}{1-\delta}+
\frac{(\eta\log2+\log3)\delta}{1-\delta}
\biggr)K.
\]
Since $\delta > 0$ can be chosen arbitrarily small it follows that $L\leq 2\eta K+o(K)$, as proposed.
\end{proof}


From Lemma~\ref{lem_not_too_many_digits} we derive the following corollary, similar in spirit to the ``odd elimination lemma'' in the manuscript~\cite{Spiegelhofer2023cubes} by the second author.
To this end, let us introduce the convenient notation
\begin{equation}\label{eqn_block_extraction_def}
n^I\eqdef\sum_{a\leq j<b}\delta_j(n)2^{j-a},
\end{equation}
for $n\in\mathbb N$ and an interval $I=[a,b)$ in $\mathbb N$,
where $\delta_j(n)$ is the base-$2$ digit of $n$ at index $n$.
\begin{corollary}[Odd elimination]\label{cor_odd}
Assume that $\varepsilon,\eta>0$,
and assume that $d$ and $k$ are positive integers.
For all $K\ge \Kzero(\eta,\varepsilon,k)$, 
the following statement holds.
\begin{quote}
For all nonnegative integers $a,b$ such that
\[
\varepsilon K\leq a<b\leq a+\eta K,
\]
and all $\omega\in\{0,\ldots,2^{b-a}-1\}$,
there exists $A\in d+k\mathbb N$ such that $0<A\leq 4k^22^{2\eta K+\varepsilon K}$
and 
\[
\bigl(A3^K\bigr)^{[a,b)}
=\omega.
\]
\end{quote}

\end{corollary}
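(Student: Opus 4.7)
The plan is to convert the statement into a hitting-time question in the finite cyclic group $\mathbb{Z}/2^{b-s}\mathbb{Z}$, where $s\coloneqq\nu_2(k)$, and to handle it by combining the three-distance theorem with a Diophantine input supplied by Lemma~\ref{lem_not_too_many_digits}. First I would write $A=d+km$ with $m\geq 0$, encoding the constraint $A\equiv d\pmod k$. Splitting $k=2^sk'$ with $k'$ odd, one has $s<a$ for $K$ sufficiently large (since $a\geq\varepsilon K$); the condition $(A\,3^K)^{[a,b)}=\omega$ then reads $(d+km)\,3^K\bmod 2^b\in[\omega 2^a,(\omega+1)2^a)$. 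Intersecting this target with the coset $d\cdot 3^K+2^s\mathbb{Z}\bmod 2^b$ produces a contiguous sub-interval of $2^{a-s}$ elements, and invertibility of $k'3^K$ modulo $2^{b-s}$ reduces the problem to finding the smallest $m\geq 1$ whose orbit point $m\cdot k'3^K\bmod 2^{b-s}$ lies in a prescribed sub-interval $I\subset\mathbb{Z}/2^{b-s}\mathbb{Z}$ of length $2^{a-s}$.

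For the hitting-time bound, I would apply the three-distance theorem to the rotation $\alpha\coloneqq k'3^K/2^{b-s}$ on $\mathbb{R}/\mathbb{Z}$: if all partial quotients of $\alpha$ with denominator $\leq N$ are bounded by $Q$, then the maximal gap among the first $N$ orbit points is at most $cQ/N$ for an absolute $c$, so the orbit covers every arc of normalised length $2^{a-b}$ once $N\geq cQ\cdot 2^{b-a}$. To bound $Q$, a partial quotient exceeding $Q$ at a convergent $p_i/q_i$ of $\alpha$ would give $|M\cdot 3^K-p_i\cdot 2^{b-s}|<2^{b-s}/(Qq_i)$ with $M\coloneqq q_ik'$, forcing a binary block of $0$s (if $M\cdot 3^K$ lies just above the multiple) or $1$s (if just below) in $M\cdot 3^K$ of length at least $\log_2(Qq_i)$. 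Since $q_i<2^{b-a}\leq 2^{\eta K}$ in the relevant range, $M$ is at most $k\cdot 2^{\eta K}$, and Lemma~\ref{lem_not_too_many_digits} (applied with a slightly enlarged $\eta$ to absorb the constant factor $k'$) caps this block length by $(1+\varepsilon)\eta K+O(1)$, yielding $Q\leq 2^{(1+\varepsilon)\eta K+O(1)}$. Substituting, the hitting time is at most $c\cdot 2^{(1+\varepsilon)\eta K+(b-a)}\leq c\cdot 2^{(2+\varepsilon)\eta K}\leq c\cdot 2^{(2\eta+\varepsilon)K}$, where the last step uses $\eta\leq 1$. This produces $m$ with $A=d+km\leq 4k^2\cdot 2^{(2\eta+\varepsilon)K}$ for $K\geq\Kzero(\eta,\varepsilon,k)$.

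The hardest part will be making the conversion between binary block length in $M\cdot 3^K$ and partial-quotient size of $\alpha$ quantitatively sharp, with explicit dependence on $\eta$, $\varepsilon$, and $k$, so that the two bounds combine without loss. Additional care is needed to handle $0$-blocks and $1$-blocks on equal footing, and to verify that the three-distance gap estimate is available in the rational setting (where $\alpha$ has denominator dividing $2^{b-s}$, and the orbit is eventually periodic with period $2^{b-s}$, so the argument only uses finitely many convergents). The arithmetic inequality $(2+\varepsilon)\eta\leq 2\eta+\varepsilon$, which holds precisely under the hypothesis $\eta\leq 1$ of Lemma~\ref{lem_not_too_many_digits}, is the key reason the target exponent $(2\eta+\varepsilon)K$ is attainable from this Diophantine input.
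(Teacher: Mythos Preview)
Your approach is correct and genuinely different from the paper's. The paper proceeds constructively: it first uses Dirichlet's approximation theorem to find an integer $C\le 2^{b-a+m}$ (with $2^{m-1}\le k<2^m$) such that the digits of $C3^K$ on the slightly enlarged window $[a-m,b)$ are all $\tO$ or all $\tL$; then Lemma~\ref{lem_not_too_many_digits} guarantees that this constant block cannot extend more than $\varepsilon K$ positions below $a-m$, so a suitable power-of-two shift produces $B$ with $(B3^K)^{[a-m,b)}$ having a single $\tL$ at the bottom. Multiples $rkB3^K$ then step the window $[a,b)$ through all values, and one picks $r<2^{b-a+1}$ so that $(rkB+d)3^K$ hits the target $\omega$.

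Your route recasts the problem as a covering time for the rotation by $k'3^K$ on $\mathbb Z/2^{b-s}\mathbb Z$ and controls this via continued fractions and the three-gap theorem, feeding in Lemma~\ref{lem_not_too_many_digits} to bound the relevant partial quotients. This is a clean and systematic reformulation; in fact, a slightly sharper bookkeeping (taking $N=q_{j_0+1}$ where $q_{j_0}$ is the first convergent denominator exceeding $2^{b-a}$, and noting that the lemma bounds $a_{j_0+1}q_{j_0}$ rather than just $a_{j_0+1}$) would yield $A\le 2^{\eta K+o(K)}$, stronger than the stated $4k^2\,2^{2\eta K+\varepsilon K}$. One small point to tighten: the gap bound you quote needs the partial quotient $a_{i+1}$ with $q_i\le N<q_{i+1}$, so ``denominator $\le N$'' should be read accordingly; your application only requires this for $q_i\le 2^{b-a}$, which is exactly where your block-length argument applies. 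The paper's argument is more elementary (no continued fractions, just Dirichlet and a shift), while yours connects the statement to standard Diophantine machinery and explains more transparently why the exponent $2\eta+\varepsilon$ arises.
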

The idea of proof of this statement is the following.
First, we choose a factor $M\ge1$, by Dirichlet's approximation theorem, such that $M3^K$ does not have binary digits in the interval $[a-m,b)$, where $m$ is a small margin coming from the modulus $k$.
Due to Lemma~\ref{lem_not_too_many_digits} it is not possible that ``too many'' digits below $a-m$ are eliminated by such a multiplication.
Therefore there exist binary digits equal to $0$ and $1$ not too far below the cleared interval of digits of $M3^K$.
This will enable us to find a factor $A$ in a prescribed residue class: we will r
require $2\nmid A$ in order to allow for uniform distribution of the lowest digits in base $2$ (see~\eqref{eqn_before_CLT} for details).

\begin{proof}[Proof of Corollary~\ref{cor_odd}]
Choose $m\ge1$ in such a way that $2^{m-1}\leq k<2^m$.
Let $\eta\ge0$, and set $\kappa\eqdef b-a+m$. 
By Dirichlet's approximation theorem we may choose an integer $C=C(K)\in\{1,\ldots,2^\kappa\}$ in such a way that
\[\lVert C3^K2^{-b}\rVert<2^{-\kappa}.\]
That is, the digits of $C3^K$ with indices in the interval $[a-m,b)$ are all equal to $\tO$, or all equal to $\tL$.
We need to find another integer $A$ lying in a prescribed residue class, having the sharper property that all the digits in the smaller interval $[a,b)$ are \emph{equal} to $\tO$ (or any other digit combination on $[a,b)$.
At this point, Schlickewei's $p$-adic subspace theorem enters in an essential way.
Lemma~\ref{lem_not_too_many_digits} yields
\[
L\bigl(
C3^K
\bigr)
\leq\kappa+o(K),
\]
and we choose $K$ large enough (depending on $\eta,\varepsilon,k$), such that $L\bigl(C3^K\bigr)<b-a+\varepsilon K$ and $\varepsilon K\geq m$.
Assume for a moment that $(C2^K)^{[a-m,b)}=0$. 
Since $a\geq \varepsilon K$,
there exists a maximal position $c\in\{0,\ldots,a-m-1\}$ such that 
$\delta_c(C3^K)=1$. (It follows that $\delta_{c+1}(C3^K)=0$).
We ``shift'' this appearance of the digit $\tL$ by $a-m-c<\varepsilon K$ places, to the position $a-m$.
Setting $B=2^{a-m-c}C$, we obtain
\[(B3^K)^{[a-m,b)}=2^{a-m}.\]
For any given $M\ge0$, the block of digits of $rkB3^k+M$ with indices in the interval $[a,b)$ changes step by step, as $r$ is varied. More precisely, since we chose $2^{m-1}\leq k<2^m$, this block attains each value once or twice, cycling through all $2^{b-a}$ possibilities.
\emph{In particular},
we may choose $r<2^{b-a+1}$ in such a way that
\[\bigl(rkB3^K+d3^K\bigr)^{[a,b)}=\omega\]
(where we can assume without loss of generality that $0\le d<k$).
We set $A\eqdef rkB+d\in d+k\mathbb N$.
Collecting the estimates, we have $C\leq 2k2^{\eta K}$, $B\leq 2^{\varepsilon K}C$, $r+1\leq 2^{\eta K+1}$, and therefore
$A\leq 4k^22^{2\eta K+\varepsilon K}$.


In the case that $(C2^K)^{[a-m,b)}=2^{\kappa}-1$,
we choose the maximal position $c\in\{0,\ldots,a-m-1\}$ such that
$\delta_c(C3^K)=0$ (and $\delta_{c+1}(C3^K)=1$).
As $r$ runs, the digits of $rkB3^K$ in $[a,b)$ cycle through all possibilities \emph{in the opposite direction}, and we obtain the conclusion in a completely analogous way.
\end{proof}

\begin{proof}[Proof of Proposition~\ref{prp_1}]
Assume that $N\ge1$, and choose $\nu\ge0$ in such a way that
\begin{equation}\label{eqn_nu_choice}
2^{\nu}\leq N<2^{\nu+1}.
\end{equation}
In the digit elimation procedure below we will use a parameter
\begin{equation}\label{eqn_R_choice}
R=2^m,\quad\mbox{where}\quad
m=\lfloor \nu/10\rfloor.
\end{equation}
The parameter $R$ will be used in van der Corput's inequality;
its binary length is a fraction $\sim 1/10$ of the binary length of $N$.

\bigskip\noindent\textbf{Applying van der Corput's inequality.}
Let us first introduce an additional factor detecting whether $n$ is even or odd.
Using Iverson bracket notation for convenience, We have
\[\Sone^{(i)}
=
\frac1N
\sum_{0\leq n<N}
\e\Bigl(t_1\digitsum_2\bigl(n3^K\bigr)+t_2\digitsum_3(n)\Bigr)
\biglbb n\equiv i\bmod 2\bigrbb,
\]
therefore an application of van der Corput's inequality yields
\begin{equation}\label{eqn_Sone_Mzero}
\bigl\lvert S_1^{(i)}\bigr\rvert^2\ll
\frac 1{R} \sum_{\substack{1\leq r<R\\2\mid r}}\Mzero^{(i)}+\Ezero,
\end{equation}
where
\begin{equation}\label{eqn_Mzero_def}
\Mzero^{(i)}\eqdef
\frac1N
\sum_{n<N}
\e\Bigl(t_1\digitsum_2\bigl(n3^K\bigr)-t_1\digitsum_2\bigl((n+r)3^K\bigr)
+t_2\digitsum_3(n)-t_2\digitsum_3(n+r)\Bigr)
\biglbb n\equiv i\bmod 2\bigrbb
\end{equation}
and
\begin{equation}\label{eqn_Ezero_def}
\Ezero\eqdef\frac 1R+\frac RN.
\end{equation}
We introduce a new parameter $\lambda_3$ to be chosen in a moment (see~\eqref{eqn_c1Rlambda_choice} below),
and apply the ``carry lemma''~\cite{DrmotaMauduitRivat2019,MauduitRivat2009,MauduitRivat2010,MuellnerSpiegelhofer2017,Spiegelhofer2015} in order to replace
$\digitsum_3$ by a $3^{\lambda_3}$-periodic term.
Setting
\begin{equation}\label{eqn_Eone_def}
\Eone\eqdef\frac R{3^{\lambda_3}},
\end{equation}
we obtain
\begin{equation}\label{eqn_Mzero_estimate}
\Mzero^{(i)}=
\frac1N
\sum_{\substack{0\leq n<N\\n\equiv i\bmod 2}}
\e\Bigl(t_1\digitsum_2
\bigl(n3^K\bigr)-t_1\digitsum_2
\bigl((n+r)3^K\bigr)
+t_2\digitsum^{[0,\lambda_3)}_3(n)-t_2\digitsum^{[0,\lambda_3)}_3(n+r)\Bigr)
+\LandauO(\Eone).
\end{equation}
Writing $n=\nL3^{\lambda_3}+\nO$, where $0\leq \nO<3^{\lambda_3}$,
we obtain
\begin{equation}\label{eqn_Mzero_estimate_1}
\Mzero\ll
\sum_{0\leq n_0<3^{\lambda_3}}
\bigl\lvert\Mtwo(n_0)\bigr\rvert
+\LandauO(\Eone),
\end{equation}
where
\begin{align}
\Mtwo(n_0)&\eqdef
\frac1N
\sum_{\substack{0\leq n<N\\
n\equiv i\bmod 2\\
n\equiv \nO\bmod 3^{\lambda_3}} }
\e\Bigl(t_1\digitsum_2
\bigl(n3^K\bigr)-t_1\digitsum_2
\bigl(n3^K+r3^K\bigr)\Bigr).
\end{align}
Note that the $3^{\lambda_3}$-periodic terms $\digitsum^{[0,\lambda_3)}_3$ have vanished.


\bigskip
\noindent
We distinguish between the cases ``$K$ small'' and ``$K$ large''.
For this, we choose parameters $c_1,R,\lambda_3$ in such a way that 
\begin{equation}\label{eqn_c1Rlambda_choice}
c_1=\min\biggl\{c_2,\frac1{10}\frac{\log 2}{\log 3}\biggr\},\quad
R\asymp N^{1/10},\quad
3^{\lambda_3}\asymp N^{1/5}.
\end{equation}
We consider, respectively, the cases
\[\begin{cases}
K<c_1\log_2 N,&\mbox{and}\\
c_1\log_2 N\leq K<c_2\log_2N.
\end{cases}\]
Let us start with the easier case.
\subsection{Small values of $K$}
Note that our choice of $\lambda_3$ implies that
$3^K\leq N^{1/10}$.
Choose
\begin{equation}\label{eqn_lambda2_choice}
\lambda_2=\lfloor3\nu/10\rfloor,
\end{equation}
such that
\begin{equation}\label{eqn_R_small}
\frac{R3^K}{2^{\lambda_2}}\ll N^{1/10}.
\end{equation}
Similarly to~\eqref{eqn_Mzero_estimate},
we have
\begin{align*}
\Mtwo(n_0)=
\frac1{2^{\lambda_2}3^{\lambda_3}}
\sum_{0\leq n<2^{\lambda_2}}
\e\Bigl(t_1
\digitsum^{[0,\lambda_2)}
(n)-t_1\digitsum^{[0,\lambda_2)}_2
\bigl(n+r3^K\bigr)\Bigr)
+\Efour,
\end{align*}
where
\begin{equation}\label{eqn_Efour_def}
\Efour\eqdef{R3^K}\biggl(\frac1{2^{\lambda_2}}\frac1{3^{\lambda_3}}+\frac1N\biggr).
\end{equation}
For this, we just note that the lowest $\lambda_2$ binary digits of $n3^K$ attain each value in a periodic fashion, as $n$ runs through $i3^{\lambda_3}+2\cdot 3^{\lambda_3}\mathbb Z$.
Excluding $R3^K$ digit combinations, the estimate follows.
Next, we replace $\digitsum^{[0,\lambda_2)}$ by $\digitsum$ again, reusing the fact that only a proportion $\ll R3^K/2^{\lambda_2}$ of integers $n$ satisfy
\[\digitsum(n)-\digitsum\bigl(n+r3^K\bigr)\neq
\digitsum^{[0,\lambda_2)}(n)-\digitsum^{[0,\lambda_2)}\bigl(n+r3^K\bigr).
\]
As this error is swallowed by $\Efour$, we obtain
\begin{equation}\label{eqn_Szero_estimate}
\bigl\lvert\Sone^{(i)}\bigr\rvert^2\ll
\frac1R\sum_{1\leq r\leq R}\bigl\lvert
\gamma_{r3^K}(t_1)
\bigr\rvert
+\Ezero+\Eone+\Efour,
\end{equation}
where
\begin{equation}\label{eqn_gamma_def}
\gamma_t(\vartheta)\eqdef
\lim_{M\rightarrow\infty}
\frac1M
\sum_{0\leq n<M}
\e\Bigl(t_1
\digitsum
(n)-t_1\digitsum
\bigl(n+t\bigr)\Bigr).
\end{equation}

We recall a result from the paper
\cite{Spiegelhofer2022}
by the second author, which was also used in~\cite{SpiegelhoferWallner2023}.
\begin{lemma}[{\cite[Lemma~2.7]{Spiegelhofer2022}}]\label{lem_gamma_tail}
Assume that $t\geq 1$ has at least $M'=2M+1$ blocks of $\tL$s.
Then
\begin{equation*}
\left\lvert\gamma_t(\vartheta)\right\rvert\leq
\biggl(1-\frac{\lVert\vartheta\rVert^2}2\biggr)^{M}
\leq
\exp\biggl(-\frac{M\lVert\vartheta\rVert^2}2\biggr).
\end{equation*}
\end{lemma}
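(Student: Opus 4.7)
The plan is to start from the classical identity
\[
\digitsum_2(n+t)-\digitsum_2(n)=\digitsum_2(t)-2c(n,t),
\]
where $c(n,t)$ counts the carries produced in the base-$2$ addition of $n$ and $t$. Factoring the phase $\e(-\vartheta\digitsum_2(t))$ out of the Ces\`aro mean reduces the lemma to bounding $\bigl\lvert\mathbb E\,\e(2\vartheta c(n,t))\bigr\rvert$, where the digits of $n$ are modelled as i.i.d.\ uniform on $\{\tO,\tL\}$; this is precisely the measure that the Ces\`aro limit realises once one conditions on a sufficiently long block of low digits of $n$ so that the addition $n+t$ does not overflow that block.

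I would then decompose $t$ into its $M'=2M+1$ maximal $\tL$-blocks and the intervening $\tO$-blocks, and track the carry $c_j$ through this alternating structure. Inside an $\tL$-block of $t$ the update rule is $c_{j+1}=c_j\vee\digit_j(n)$, so that once triggered the carry persists to the end of the block; inside an $\tO$-block the rule is $c_{j+1}=c_j\wedge\digit_j(n)$, so that once extinguished the carry stays off. Each block is therefore encoded by a $2\times 2$ transfer matrix in the two-element carry alphabet, whose entries are explicit finite geometric sums in $\e(2\vartheta)$, and $\mathbb E\,\e(2\vartheta c(n,t))$ is the bilinear form $(1,0)\,\mathcal P(\vartheta)\,(1,1)^T$, where $\mathcal P(\vartheta)$ is the ordered product of these matrices. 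At $\vartheta=0$ every transfer matrix is column-stochastic, so $\mathcal P(0)$ is column-stochastic with invariant direction $(1,1)^T$.

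The core estimate is to show that each combined matrix $P_{b,b'}(\vartheta)=B_{b'}(\vartheta)A_b(\vartheta)$ attached to a consecutive pair (an $\tL$-block of length $b$ followed by an $\tO$-block of length $b'$) has its top eigenvalue $\mu(\vartheta)$ bounded by $\lvert\mu(\vartheta)\rvert\le 1-\lVert\vartheta\rVert^2/2$, uniformly in $b,b'\ge 1$. The Gelfond-type identity $\tfrac14\lvert 1+\e(2\vartheta)\rvert^2=1-2\lVert\vartheta\rVert^2+\Ord{\lVert\vartheta\rVert^4}$ is the source of the quadratic loss, and an averaging over the first $\digit(n)$ inside the $\tO$-block (equivalently, over the last $\digit(n)$ of the preceding $\tL$-block) realises it. Pairing up $2M$ of the $M'=2M+1$ $\tL$-blocks with their adjacent $\tO$-blocks (the leftover $\tL$-block contributing a factor of modulus at most~$1$, absorbed into a boundary term) yields a product of $M$ contracting matrices; evaluating the bilinear form by expanding in the perturbed eigenbasis at $\vartheta=0$ and iterating produces $\lvert\gamma_t(\vartheta)\rvert\le(1-\lVert\vartheta\rVert^2/2)^M$. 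The exponential form then follows from $1-x\le\e^{-x}$.

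The principal obstacle is the \emph{uniformity in the block lengths} of the per-pair contraction. For large $b$ or $b'$ the transfer matrices degenerate to rank-one projectors, and one must verify that the eigenvalue $\mu(\vartheta)$ still stays at distance $\gtrsim\lVert\vartheta\rVert^2$ from $1$. This is handled by observing that the rank-one limits annihilate the non-invariant direction outright, which is even better than contracting it by $1-\lVert\vartheta\rVert^2/2$, while the quadratic loss is already present in the minimal configuration $b=b'=1$; a short perturbation argument around $\vartheta=0$, or a direct computation of $\mu(\vartheta)$ from the closed-form geometric sums, interpolates between these extremes and yields the uniform bound required.
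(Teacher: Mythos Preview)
The paper does not prove this lemma; it is quoted from \cite[Lemma~2.7]{Spiegelhofer2022} and used without argument, so there is no in-paper proof to compare against.

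Your route via the carry identity and transfer matrices is sound and is essentially a repackaging of the standard recursion
\[
\gamma_{2t}(\vartheta)=\gamma_t(\vartheta),\qquad
\gamma_{2t+1}(\vartheta)=\tfrac12\bigl(\e(-\vartheta)\gamma_t(\vartheta)+\e(\vartheta)\gamma_{t+1}(\vartheta)\bigr),
\]
which writes $(\gamma_t,\gamma_{t+1})$ as a product of $2\times2$ matrices over the binary digits of $t$; this is the device behind the cited result. There is, however, a genuine gap in your quantitative step. You assert that each combined matrix $P_{b,b'}(\vartheta)$ attached to a single $(\tL\mbox{-block},\tO\mbox{-block})$ pair has top eigenvalue $\lvert\mu(\vartheta)\rvert\le 1-\lVert\vartheta\rVert^2/2$, and then that $2M$ such pairs yield only $M$ factors of $1-\lVert\vartheta\rVert^2/2$. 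These two claims are inconsistent as written: if the eigenvalue bound held per pair and controlled the product, you would get the exponent $2M$, not $M$. The resolution is that the $P_{b,b'}$ are not normal, so a spectral-radius bound does not control a single application in operator norm; one block is effectively spent rotating into the contracting direction before the next delivers the factor. Your phrase ``expanding in the perturbed eigenbasis'' hints at this but does not implement it, and the sharp constant $\tfrac12$, uniform over all block lengths $b,b'\ge1$, will not fall out of a soft perturbation argument --- it requires either the explicit computation you allude to or a judiciously chosen norm in which every $P_{b,b'}$ is an honest contraction by the right amount, neither of which your outline carries out.
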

In order to guarantee that $r3^K$ has sufficiently many blocks of $\tL$s, we apply Lemma~\ref{lem_not_too_many_digits}.
We assume that
\begin{equation}\label{eqn_R_size}
R<2^{\eta K}.
\end{equation}
For each $\varepsilon>0$, and $K\ge K_0(\eta,\varepsilon)$, the lemma implies
\[
L\bigl(r3^K\bigr)\leq(1+\varepsilon)\eta K.\]
In particular, as $K\rightarrow\infty$, and~\eqref{eqn_R_size} is satisfied, the number of maximal blocks of $\tL$s in $r3^K$ is $\gg K\eta$ .

By Lemma~\ref{lem_gamma_tail} it follows that there exists a constant $c$ (depending on $\eta$) such that
\[\bigl\lvert \Sone^{(i)}\bigr\rvert^2
\ll\exp\biggl(
-cK\lVert t_1\rVert^2
\biggr)
+\Ezero+\Eone+\Efour.
\]
Collecting the error terms, we see tht the proposition is proved for the case of ``small $K$''.

\subsection{Large values of $K$}
In order to handle the second case, we will use the odd elimination lemma (Corollary~\ref{cor_odd}), based on Schlickewei's $p$-adic subspace theorem in an essential way.

\bigskip\noindent\textbf{Iterating van der Corput.}
Applying Cauchy-Schwarz and van der Corput alternatingly, we arrive at the following statement~\cite{Spiegelhofer2023cubes}.

\begin{lemma}\label{lem_vdC_iterated}
Let $Q\geq 1$ be an integer.
Assume that $J$ is a finite nonempty interval in $\mathbb Z$,
and $g:J\rightarrow \{z\in\mathbb C:\lvert z\rvert=1\}$.
For all integers $\factor_0,\ldots,\factor_{Q-1}\ge1$ and $R\ge1$, we have
\begin{equation}\label{eqn_vdC_iterated}
\begin{aligned}
\Biggl\lvert
\frac1{\lvert J\rvert}
\sum_{n\in J}
g(n)
\Biggr\rvert^{2^Q}
&\ll
\frac1{R^Q}
\sum_{r\in\{1,\ldots,R-1\}^Q}
\bigl\lvert
K\bigl(r_0\factor_0,\ldots,r_{Q-1}\factor_{Q-1}\bigr)
\bigr\rvert
\\&\quad
+
\frac{\bigl(\factor_0+\cdots+\factor_{Q-1}\bigr)R}{\lvert J\rvert}
+\frac1R,
\end{aligned}
\end{equation}
where
\[
K\bigl(t_0,\ldots,t_{Q-1}\bigr)
\eqdef
\frac1{\lvert J\rvert}
\sum_{n\in J}
\prod_{\varepsilon\in\{0,1\}^Q} 
\mathcal C^{\lvert\varepsilon\rvert}
g\Biggl(n+\sum_{0\leq\ell<Q}\varepsilon_\ell t_\ell\Biggr),
\]
and $\mathcal C$ is pointwise complex conjugation.
The implied constant depends only on $Q$.
\end{lemma}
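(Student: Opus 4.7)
The plan is to prove Lemma~\ref{lem_vdC_iterated} by induction on $Q$, with base case $Q=1$ being a single application of van der Corput's inequality and the inductive step iterating this bound while carefully tracking the error terms.

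For the base case, one starts from the averaging identity
\[
\frac 1{|J|} \sum_{n\in J} g(n) = \frac 1{R|J|}\sum_{n\in J}\sum_{s=0}^{R-1} g(n+s\factor_0) + \LandauO\!\left(\frac{R\factor_0}{|J|}\right),
\]
where the error comes from the $\LandauO(s\factor_0)$ boundary discrepancy when reindexing each shifted sum back to $J$. Squaring, applying Cauchy--Schwarz in the outer variable $n$, expanding and substituting $r=s-s'$, the diagonal $r=0$ contributes $\LandauO(1/R)$ while the off-diagonal part gives the average over $r\in\{1,\dots,R-1\}$ of the one-shift correlation $K(r\factor_0)$, establishing the $Q=1$ case.

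For the inductive step, applying the $Q=1$ bound once with shift $\factor_0$ and raising to the $2^{Q-1}$-th power, one uses $(a+b)^{2^{Q-1}}\ll a^{2^{Q-1}}+b^{2^{Q-1}}$ together with Jensen's inequality $\bigl(\frac 1R\sum_{r_0} x_{r_0}\bigr)^{2^{Q-1}} \ll \frac 1R \sum_{r_0} x_{r_0}^{2^{Q-1}}$ to reduce to bounding $|K(r_0\factor_0)|^{2^{Q-1}}$. Since $K(r_0\factor_0)=\frac 1{|J|}\sum_n g_1(n)$ with $g_1(n)=g(n)\overline{g(n+r_0\factor_0)}$ again unimodular, the inductive hypothesis applied to $g_1$ with the remaining shifts $\factor_1,\dots,\factor_{Q-1}$ yields a main term built from the $(Q-1)$-fold correlation of $g_1$; a direct parity check on $\{0,1\}^Q$ shows that this assembles with the outer conjugate pair into exactly $K(r_0\factor_0,\dots,r_{Q-1}\factor_{Q-1})$.

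The main technical nuisance, rather than a deep obstacle, is that the convexity step generates auxiliary errors $(R\factor_0/|J|)^{2^{Q-1}}$ and $(1/R)^{2^{Q-1}}$. Since the left-hand side of the claimed inequality is bounded by $1$, the bound is vacuous whenever its right-hand side exceeds $1$, so one may assume every error quantity lies in $[0,1]$ and use $x^{2^{Q-1}}\le x$ to collapse each auxiliary term back to its linear form. Telescoping the $Q$ error contributions accumulated across the induction then yields exactly $(\factor_0+\cdots+\factor_{Q-1})R/|J|+1/R$, as claimed.
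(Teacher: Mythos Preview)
Your proof is correct and follows exactly the approach the paper indicates (``applying Cauchy--Schwarz and van der Corput alternatingly'', with a citation to~\cite{Spiegelhofer2023cubes} in lieu of a written-out argument): one van der Corput step for the base case, then Jensen plus the inductive hypothesis applied to the differenced function $g_1$. The only cosmetic point is that $g_1(n)=g(n)\overline{g(n+r_0\factor_0)}$ is not literally unimodular near the boundary of $J$ (it vanishes where $n+r_0\factor_0\notin J$), but the same argument goes through verbatim for $\lvert g\rvert\le 1$, so this does not affect the induction.
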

Each factor $\factor_\ell$ is responsible for the ``elimination of digits'' in a short interval, 
and also in a small margin just below the interval.
Recall that $m$ is the binary length of $R$~\ref{eqn_R_choice}.
We double this value in order to obtain a useful \emph{double margin}, and we define
\begin{equation}\label{eqn_elimination_intervals_def}
\begin{array}{rlrlrl}
a_\ell&=\lambda_2-\ell\kappa&
b_\ell&=a_\ell-m&
c_\ell&=a_\ell-2m,\\
I_\ell&=[a_{\ell},a_{\ell-1}),&
I'_\ell&=[b_{\ell},a_{\ell-1}),&
I''_\ell&=[c_{\ell},a_{\ell-1}).
\end{array}
\end{equation}
The interval of digits we want to remove in step $\ell$ (where $1\leq \ell\leq Q$, and $Q$ is yet to be defined) is $I_\ell$.
In order to achieve this, we \emph{clear} the binary digits of $3^{\Kone}$ in the larger interval $I''_\ell$ of length $\kappa+2m$, multiplying this value by some odd integer produced by Corollary~\ref{cor_odd}.
Multiplying this new value by any number in $r_\ell\in\{0,\ldots,R-1\}$, we still have only $\tO$s 
in the slightly smaller interval $I'_\ell$ of length $\kappa+m$.
Consequently, \emph{in most cases}, the quantity $r_\ell \factor_\ell3^{\Kone}$ can be added to an integer $n$ without changing the digits in $I_\ell$.
Namely, this happens precisely when not all of the digits of $n$ on the upper part $[b_{\ell},a_{\ell})$ of the margin are equal to $\tL$, and thus carry propagation is interrupted.
Set
\[g(n)=\e\bigl(t_1\digitsum_2^{[0,\lambda_2)}\bigl(n3^{\Kone}+\nO3^K\bigr)\bigr).\]
Lemma~\ref{lem_vdC_iterated} implies
\begin{equation}\label{eqn_Mtwo_Mthree}
\begin{aligned}
\frac1{\lvert J(n_0)\rvert}
\bigl\lvert\Mtwo(n_0)\bigr\rvert^{2^{Q}}
&\leq 
\frac1{R^{Q-1}}
\sum_{r_1,\ldots,r_{Q}\in\{1,\ldots,R-1\}}
\bigl\lvert
\Mthree\bigl(r_03^K,r_1\factor_13^{\Kone},\ldots,r_{Q}\factor_{Q}3^{\Kone};\nO3^K\bigr)
\bigr\rvert
\\&\quad
+
\frac{\bigl(\factor_1+\cdots+\factor_{Q}\bigr)R}{\lvert J\rvert}
+\frac1R,
\end{aligned}
\end{equation}
where 
$\Mthree$ is the iterated correlation
\begin{equation}\label{eqn_before_CLT}
\begin{aligned}
\Mthree(t_0,\ldots,t_Q;a)\eqdef
\frac1{\lvert J(\nO)\rvert}
\sum_{\nL\in J(\nO)}
\prod_{\varepsilon\in\{0,1\}^{Q+1}} 
\e\Biggl(
(-1)^{\lvert\varepsilon\rvert}
t_1s_2^{[0,\lambda_2)}\biggl(\nL 3^{\Kone}+\sum_{0\leq\ell\leq Q}\varepsilon_\ell t_\ell+a\Biggr)\Biggr).
\end{aligned}
\end{equation}
In the next few paragraphs, we will choose the parameters $\factor_\ell$ suitably in order to discard intervals of digits, one interval for each $\ell\in\{1,\ldots,Q\}$.

\bigskip\noindent\textbf{Preparing digit elimination.}
We begin with the observation that the binary digits of $\nL\mapsto \nL3^{\Kone}$ with indices in $[b_\ell,a_\ell)$ attain all combinations in a uniform manner.
This will be needed in order to bound the number of cases where for some $\ell$, carry propagation from the upper half $[b_\ell,a_\ell)$ of the margin into the interval $I_\ell$ happens.
This is our first use of Corollary~\ref{cor_odd}.
Assume that $\varepsilon,\eta>0$.
We suppose in the following that $x,y,\Kone$ are integers satisfying
\begin{equation}\label{eqn_elim_requirement}
\begin{aligned}
\Kone&\geq\Kzero(\eta,\varepsilon,1),\\
\varepsilon \Kone&\leq x\leq y\leq x+\eta \Kone.
\end{aligned}
\end{equation}
By the corollary, 
\begin{equation}\label{eqn_elim_statement}
\bigl(A3^{\Kone}\bigr)^{[x,y)}=1
\quad\mbox{for some}\quad
A\ll 2^{(2\eta+\varepsilon)\Kone}.
\end{equation}
That is, the digits of $A3^{\Kone}$ are zero on $(x,y)$,
and the digit at index $x$ equals $\tL$.
It follows that for each $b$,
\[\nLL\mapsto\bigl((\nLL A+\nLO)3^{\Kone}+b\bigr)^{(x,y)}\]
attains each value once or twice in a row, running through all possibilities.
Using the decomposition $\nL=\nLL A+\nLO$, we obtain,
for any $\omega\in\{0,\ldots,2^{y-x}-1\}$,
\[
\bigl\lvert\bigl\{
\nL\in J(\nO):
\bigl(\nL3^{\Kone}+b\bigr)^{[x,y)}=\omega
\bigr\}
\bigr\rvert
\ll\Etwo
\]
with some absolute implicit constant, where
\begin{equation}\label{eqn_Etwo_def}
\Etwo\eqdef\frac{\lvert J(\nO)\rvert}{2^{y-x}}+A.
\end{equation}

Recall that in our application, the role of interval $[x,y)$ will be played by the upper part $[b_{\ell},a_{\ell})$ of the margin.
\emph{For all} $(\varepsilon_0,\ldots,\varepsilon_Q)\in\{0,1\}^{Q+1}$,
the digits of
\begin{equation}\label{eqn_2Q_possibilities}
\mathcal L(\varepsilon_0,\ldots,\varepsilon_Q)\eqdef
\nL3^{\Kone}+\varepsilon_0\rzero+\sum_{1\leq\ell\leq Q}\varepsilon_\ell r_\ell\factor_\ell3^{\Kone}+\nO3^{K}
\end{equation}
with indices in $[b_{\ell},a_{\ell})$
should not be identical to $\tL$.
That is, $\omega=2^{y-x}-1$ has to be excluded.
The error term $\Etwo$ has to be multiplied by a factor $Q2^{Q+1}$, since we want to avoid carry overflow \emph{simultaneously} for all
$1\leq \ell\leq Q$ and $\varepsilon\in\{0,1\}^{Q+1}$.

\smallskip\noindent
Consequently, for all $\ell\in\{1,\ldots,Q\}$ and all choices $\varepsilon_k\in\{0,1\}$ for $k\in\{0,\ldots,Q\}\setminus\{\ell\}$, the integers
\begin{equation}\label{eqn_whatwehave}
\mathcal L\bigl(\varepsilon_0,\ldots,\varepsilon_{\ell-1},0,\varepsilon_{\ell+1},\ldots,\varepsilon_Q\bigr)\quad\mbox{and}\quad
\mathcal L\bigl(\varepsilon_0,\ldots,\varepsilon_{\ell-1},1,\varepsilon_{\ell+1},\ldots,\varepsilon_Q\bigr)
\end{equation}
have the same digits in 
$[a_{\ell},a_{\ell-1})$
for all but $\LandauO(\Ethree)$
integers $\nL\in J(\nO)$, where
\begin{equation}\label{eqn_Ethree_def}
\Ethree\eqdef 
\frac{N}{3^{\lambda_3}2^{y-x}}+A.
\end{equation}
The implied constant may depend on $Q$.

\bigskip\noindent\textbf{Discarding digits block by block.}
Assume that $c_2>0$, and 
consider the requirement
$c_1\log_2N\leq K\leq c_2\log_2 N$. 
In step $\ell$, where $1\le\ell\le Q$, we want to remove the interval 
$[a_\ell,a_{\ell-1})$ (see~\eqref{eqn_elimination_intervals_def}).
For simplicity, we assume that its length $\kappa$ is a fixed fraction of the binary length $\nu$ of $N$:
\begin{equation}\label{eqn_elimination_interval_length}
\kappa\sim \nu/10.
\end{equation}
After the removal of $I_1,\ldots,I_Q$, only the interval $[0,\mu)$ of digits will be left, where
\begin{equation}\label{eqn_digits_left}
\mu=\lambda_2-Q\kappa
\end{equation}
should satisfy
\[2\kappa\leq \mu<3\kappa.\]
Clearly, this implies
\begin{equation}\label{eqn_Q_size}
Q\asymp (1+c_2)^{-1}.
\end{equation}
As for the size of the margins, we note that $m=\kappa=\log_2R$.
By~\eqref{eqn_digits_left}, the intervals
\[I''_{\ell}=\bigl[a_\ell-2m,a_{\ell-1}\bigr),\]
where $a_\ell=\lambda_2-\ell\kappa$ as in~\eqref{eqn_elimination_intervals_def},
are well separated from $0$.
In particular, there are enough digits below our intervals to be eliminated in order to apply Corollary~\ref{cor_odd}.
Suppose that $\Kone\geq \Kzero(\eta,\varepsilon,2)$ as stated in the corollary.
(The parameter $2$ handles the even/odd restriction on $n$.)
We obtain \emph{odd factors} $\factor_1,\ldots,\factor_Q$,
\[\factor_{\ell}\ll 2^{(2\eta+\varepsilon)\Kone},
\]
such that $\bigl(\factor_{\ell}3^{\Kone}\bigr)^{[x,y)}=0$,
where $[x,y)=I''_{\ell}$.


Introducing the error $\Ethree$ defined in~\eqref{eqn_Ethree_def} takes care of the integers $\nL\in J(\nO)$ that are exceptional for \emph{some} index $1\leq \ell\leq Q$ and \emph{some} choice $(\varepsilon_0,\ldots,\varepsilon_Q)\in\{0,1\}^{Q+1}$.
For the remaining $\nL$, we consider the product on the right hand side of~\eqref{eqn_before_CLT}.
In a way analogous to~\cite{DrmotaMuellnerSpiegelhofer2022, Spiegelhofer2020} we
may discard the digits with indices outside $[0,\mu)$.
That is, we (1) exclude the critical indices $\nL$, (2) apply the digit-cancelling argument implemented in the cited papers,
and (3) reinsert the missing indices again.
Up to an error term $\Ethree$, this leads to an expression
\begin{equation}\label{eqn_Mfour}
\begin{aligned}
&\Mfour(t_0,\ldots,t_Q;a)\eqdef
\frac1{\lvert J(\nO)\rvert}
\sum_{\nL\in J(\nO)}
\prod_{\varepsilon\in\{0,1\}^{Q+1}} 
\e\Biggl(
(-1)^{\lvert\varepsilon\rvert}
t_1s_2^{[0,\mu)}\biggl(\nL 3^{\Kone}+\sum_{0\leq\ell\leq Q}\varepsilon_\ell t_\ell+a\Biggr)\Biggr),\\
&\mbox{where\quad $t_0=\rzero3^K$, \quad $t_\ell=r_\ell\factor_\ell3^{\Kone}$ for $1\leq \ell\leq Q$,\quad and \quad $a=\nO3^K$.}
\end{aligned}
\end{equation}
At this point, the main work has already been done.
The remaining sum over $\nL$ is long enough to traverse all digit combinations on $[0,\mu)$ in a uniform manner, but by construction, the sums over $r_\ell$ are too short.
In order to transform the higher order correlations into a Gowers norm, we shorten our interval of digits once more.
This leaves only $[0,\rho)$, where $\rho=\nu/6$ (recall that $\mu\ge2\kappa=\nu/5$).

\bigskip\noindent\textbf{Removing the last interval of digits.}
Let the odd positive integer $\factor_{Q+1}<2^\mu$ be chosen in such a way that
\begin{equation}\label{eqn_multiplicative_inverse}
\factor_{Q+1}3^{\Kone}\equiv 1 \bmod 2^\mu.
\end{equation}
Let $\rho'$ be an integers such that $0\leq\rho'\leq\rho$.
We apply van der Corput's inequality one last time in order to eliminate the digits in $[\rho,\mu)$.
This yields
\begin{equation}\label{eqn_Mfour_Mfive}
\begin{aligned}
\hspace{8em}&\hspace{-8em}
\bigl\lvert
\Mfour(t_0,\ldots,t_Q;a)
\bigr\rvert^2
\leq
\frac{\lvert J(\nO)\rvert+2^{\mu+\rho'}}{\lvert J(\nO)\rvert2^{\rho'}}
\sum_{0\leq \lvert r_{Q+1}\rvert<2^{\rho'}}
\biggl(1-\frac{\lvert r_{Q+1}\rvert}{2^{\rho'}}\biggr)
\\&\times\sum_{\nL,\nL+r_{Q+1}\factor_{Q+1}\in J(\nO)}
\Mfive(t_0,\ldots,t_{Q+1};\sigma,3^{\Kone},a),
\end{aligned}
\end{equation}
where
\begin{equation}\label{eqn_Mfive}
\begin{aligned}
&\Mfive(t_0,\ldots,t_{Q+1};\sigma,x,a)\eqdef
\frac1{\lvert J(\nO)\rvert}
\sum_{\nL\in J(\nO)}
\prod_{\varepsilon\in\{0,1\}^{Q+2}} 
\e\Biggl(
(-1)^{\lvert\varepsilon\rvert}
{t_1}s_2^{[0,\sigma)}\biggl(\nL x+\sum_{0\leq\ell\leq Q+1}\varepsilon_\ell t_\ell+a\Biggr)\Biggr),\\
&\mbox{and\quad
$\sigma=\mu$,\quad
$x=3^{\Kone}$,
\quad $t_0=\rzero3^K$, \quad $t_\ell=r_\ell\factor_\ell3^{\Kone}$ for $1\leq \ell\leq Q+1$, and \quad $a=\nO3^K$.}
\end{aligned}
\end{equation}
Note that~\eqref{eqn_multiplicative_inverse} implies
\[\bigl(r_{Q+1}\factor_{Q+1}3^{\Kone}\bigr)^{[\rho',\mu)}=0.\]
Consequently, for any shift $s$ and any interval $I$ of length $2^\rho$, there are at most $2^{\rho'}$ integers $\nL\in I$ such that
\[
\bigl(\nL3^{\Kone}+s\bigr)^{[\rho,\mu)}\neq
\bigl((\nL+r_{Q+1}\factor_{Q+1})3^{\Kone}+s\bigr)^{[\rho,\mu)}
\]
(the critical values being those where all digits of $\nL3^{\Kone}+s$  in $[\rho',\rho)$ are equal to $\tL$).
Therefore, excluding $\LandauO(\Ethree')$ integers $\nL\in J(\nO)$, where
\[\Ethree'=2^{\rho'}\biggl\lceil\frac{\lvert J(\nO)\rvert}{2^{\rho}}\biggr\rceil
\leq 2^{\rho'-\rho}\lvert J(\nO)\rvert+2^{\rho'},
\]
we may replace $[0,\mu)$ by $[0,\rho)$ in the definition of $\Mfive$.
Note once again that the implied constant may depend on $Q$.

The next step consists in replacing the sum over multiples of $3^{\Kone}$ by a full sum, exploiting the fact that the sum over $\nL$ is long enough, and $3^{\Kone}$ is odd.
Also, we may remove the factors $t_\ell$, for $0\leq \ell\leq Q$,
using $R=2^m\mid 2^{\rho}$.
Introducing a negligible error term coming from the decomposition of $J(\nO)$ into intervals of length $2^\rho$ (the last interval might contribute an error), we arrive at the expression
\begin{equation}\label{eqn_Msix}
\begin{aligned}
&\Msix(t_0,\ldots,t_Q;a)\eqdef
\frac1{2^{\rho'}}
\sum_{r_{Q+1}<2^{\rho'}}
\frac1{2^{(Q+1)\rho}}
\sum_{r_0,\ldots,r_{Q+1}<2^{\rho}}
\Biggl\lvert
\Mfive(t_0,\ldots,t_Q,t_{Q+1};\rho,1,0)
\Biggr\rvert
,\\
&\mbox{where\quad $t_0=\cdots=t_Q=1$, \quad $t_{Q+1}=\factor_{Q+1}3^{\Kone}$.}
\end{aligned}
\end{equation}
We have almost arrived at a Gowers norm.
In order to handle the summation over $r_{Q+1}$, which is shorter than $2^\rho$ by construction, we use a standard trick reminiscent of the ``$17$ camels puzzle''~\cite{Stockmeyer2013}.
We extend the sum to $\{0,\ldots,2^\rho-1\}$ (thus inserting terms representing the $18$th camel), noting that each summand is nonnegative.
Applying the the Cauchy--Schwarz inequality, we insert another variable $r_{Q+2}$ ranging over $\{0,\ldots,2^\rho-1\}$, and discard the absolute value again (this is also applied in our papers~\cite{Spiegelhofer2020,DrmotaMuellnerSpiegelhofer2022}).
The expression $\Msix$ is replaced by a Gowers-$(Q+3)$-norm on $\mathbb Z/2^\rho\mathbb Z$, where an additional factor $\bigl(2^{\rho-\rho'}\bigr)^2$ arising from our extension of a summation range is present.
Having arrived at a full Gowers norm, we can use available techniques in order to arrive at a nontrivial estimate.
More precisely, extending the method devised in~\cite[Chapter~5.2]{DrmotaMuellnerSpiegelhofer2022} for the \emph{Zeckendorf sum-of-digits function}, we arrive at the following statement.
\begin{proposition}
Let $q\ge2$ and $Q\ge1$ be integers, and $\vartheta\in\mathbb Z$.
There exists a constant $c>0$ such that
\begin{equation}
\frac1{q^{(Q+1)\mu}}
\sum_{0\leq n<q^\mu}
\sum_{r\in \{0,\ldots,q^\mu-1\}^Q}
\prod_{\varepsilon\in\{0,1\}^Q}
\e\biggl(
(-1)^{\lvert\varepsilon\rvert}
\vartheta s_q\bigl(n+\varepsilon\cdot r\bigr)
\biggr)
\ll q^{-c\mu\lVert (q-1)\vartheta\rVert^2}
\end{equation}
as $\mu\rightarrow\infty$.
\end{proposition}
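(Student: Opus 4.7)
The plan is to adapt the transfer-operator approach used for the Zeckendorf sum-of-digits function in~\cite[Chapter~5.2]{DrmotaMuellnerSpiegelhofer2022}, which is in fact simpler in the base-$q$ setting, since base-$q$ digits are independent and $s_q(n)=\sum_i\delta_i(n)$ splits cleanly over digit positions. The key idea is to represent the simultaneous computation of the $2^Q$ values $\{s_q(n+\varepsilon\cdot r):\varepsilon\in\{0,1\}^Q\}$ as a finite-state process scanning the digit positions $i=0,1,\ldots,\mu-1$: the state tracks the incoming carry bits for the $2^Q$ additions, and the transition weight at position $i$ is the phase accumulated by the $\e$-character product, summed over all $q^{Q+1}$ admissible digit choices $(\delta_i(n),\delta_i(r_1),\ldots,\delta_i(r_Q))$.

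First I would make this representation precise, defining an operator $\mathcal T(\vartheta)$ on the finite-dimensional space indexed by carry configurations and verifying that the left-hand side equals $\langle v_0,\mathcal T(\vartheta)^\mu v_1\rangle$ for explicit boundary vectors $v_0,v_1$. Next I would check that $\mathcal T(0)$ has spectral radius exactly $1$, with leading eigenvector given by the stationary carry distribution. The crucial step is then a perturbation analysis of the top eigenvalue $\lambda(\vartheta)$ near $\vartheta=0$: one expects $\lambda(\vartheta)=1-c\|(q-1)\vartheta\|^2+O(\|(q-1)\vartheta\|^4)$, and iteration then yields $\langle v_0,\mathcal T(\vartheta)^\mu v_1\rangle\ll|\lambda(\vartheta)|^\mu\ll q^{-c\mu\|(q-1)\vartheta\|^2}$. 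The factor $(q-1)$ in the norm is forced by the congruence $s_q(n)\equiv n\pmod{q-1}$, since $\e(\vartheta s_q(\cdot))$ becomes trivial exactly when $(q-1)\vartheta\in\mathbb Z$, and this kernel must be divided out of the perturbation analysis.

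The hard part will be the spectral analysis itself: verifying that the top eigenvalue of $\mathcal T(0)$ is simple (non-obvious since the carry state space grows doubly exponentially in $Q$) and extracting the quadratic decay with a constant $c=c(q,Q)>0$ that is uniform in $\mu$. An alternative, more combinatorial route would be induction on $Q$: the base case $Q=1$ reduces to estimating the average over $r<q^\mu$ of the correlation $\gamma_r(\vartheta)=q^{-\mu}\sum_n\e(\vartheta s_q(n)-\vartheta s_q(n+r))$, which can be treated via a base-$q$ analogue of Lemma~\ref{lem_gamma_tail}, using that a typical $r<q^\mu$ has $\Theta(\mu)$ non-zero base-$q$ digits, each of which contributes a damping factor of order $\exp(-c'\|(q-1)\vartheta\|^2)$. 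The inductive step would then apply Cauchy--Schwarz to pass from the $Q$-dimensional average to a $(Q-1)$-dimensional Gowers-type average of the multiplicative derivative $f(n)\overline{f(n+r_Q)}$; the subtle point is that one must formulate the induction hypothesis for a sufficiently broad class of functions, so that the derivative still falls inside the hypothesis and the $\|(q-1)\vartheta\|^2$ decay rate is preserved throughout.
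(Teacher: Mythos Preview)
The paper does not actually supply a proof of this proposition. Immediately after the statement it says: ``A full proof of this statement is given in a manuscript by Jelinek (in preparation)'', and the only methodological hint is that one should extend the transfer-matrix method from~\cite[Chapter~5.2]{DrmotaMuellnerSpiegelhofer2022} for the Zeckendorf sum of digits to base~$q$. Your first approach is therefore exactly in line with what the paper indicates, and there is nothing further in the paper to compare against.

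A few remarks on your outline itself. The finite-state description is correct: processing the digits of $n,r_1,\ldots,r_Q$ from the bottom, the state for each $\varepsilon\in\{0,1\}^Q$ is the carry into the current position of the addition $n+\varepsilon\cdot r$, and since at most $Q+1$ base-$q$ digits are being summed, each carry lies in $\{0,\ldots,Q\}$; moreover the carries are not independent (e.g.\ the carry for $\varepsilon=0$ is identically zero, and more generally $\varepsilon\le\varepsilon'$ forces the corresponding carries to be ordered), so the effective state space is a proper subset of $\{0,\ldots,Q\}^{2^Q}$. The simplicity of the top eigenvalue of $\mathcal T(0)$ follows once one checks that this restricted state space is strongly connected under the transitions, which is the genuine combinatorial content and is what the Jelinek manuscript presumably carries out. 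Your identification of the kernel $(q-1)\vartheta\in\mathbb Z$ is the right normalisation. (Note incidentally that the hypothesis ``$\vartheta\in\mathbb Z$'' in the displayed statement is a typo for $\vartheta\in\mathbb R$; as written both sides equal~$1$.)

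Your alternative inductive route is less promising as stated. Passing from the $Q$-fold to the $(Q{-}1)$-fold average by Cauchy--Schwarz goes the wrong way for an upper bound: Gowers--Cauchy--Schwarz gives $\lVert f\rVert_{U^{Q-1}}\le\lVert f\rVert_{U^Q}$, not the reverse. One can of course fix a shift $r_Q$ and view the inner expression as a $(Q{-}1)$-fold average of the derivative $\Delta_{r_Q}f$, but then the induction hypothesis must control \emph{averages over $r_Q$} of such quantities, which is essentially the original problem again. If you pursue this line you will end up re-deriving the matrix recursion in disguise; the transfer-operator formulation is the cleaner packaging.
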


A full proof of this statement is given in a manuscript by Jelinek (in preparation).
Provided that $\rho'$ is close enough to $\rho$,
the gain coming from this estimate is strictly bigger than the contribution $(2^{\rho-\rho'})^2$ of the artificially added terms.
Thus, ``the additional camel can be returned'', leaving us with a nontrivial estimate of the original sum.

We have to take care of the fact that $\lVert\vartheta\rVert$ may be very small, yielding only a small gain in the Gowers norm.
Correspondingly, our additional margin $\rho-\rho'$ has to be very small too.
This issue has been dealt with in our paper~\cite{DrmotaMuellnerSpiegelhofer2022} with M\"ullner, see pages 80--81, and only amounts to decreasing the constant $c$ in the expression $N^{-c\lVert \vartheta\rVert^2}$ by some factor.


\end{proof}

\section{Proof of Proposition~\ref{Pro2}}

The proof of Proposition~\ref{Pro2} relies mainly on the following lemma.

\begin{lemma}\label{LeKey}
Suppose that $0 \le K \le c_2 \log N$, where $c_2 >0$.
Set $N_1 = \lfloor \log_{2 } (3^K N) \rfloor$ and $N_2 =  \lfloor \log_{3 }N \rfloor$ 
let $\lambda>0,\eta>0$ be an arbitrary constant and let $d_1$, $d_2$ be positive integers. 
Then for integers
\begin{equation}\label{eqrange2}
N_1^{\eta} \le k_1 < k_2 < \cdots < k_{d_1} \le  N_1 - N_1^{\eta}
\end{equation}
and
\begin{equation}\label{eqrange2-2}
N_2^{\eta} \le \ell_1 < \ell_2 < \cdots < \ell_{d_2} \le  N_2 - N_2^{\eta}
\end{equation}
we have for $i\in \{0,1\}$, as $N\to\infty$
\begin{align}
&\frac 2N \# \left\{ n < N,\, n\equiv i\bmod 2 \,:\, \varepsilon_{2,k_{j_1}}(3^Kn) = b_{j_1},\,
1\le j_1\le d_1,\, \varepsilon_{3,\ell_{j_2}}(n) = c_{j_2},\, 1\le j_2 \le d_2  \right\}
\nonumber\\
&= \frac 1{2^{d_1}3^{d_2}} + O\left((\log N)^{-\lambda}\right)
\label{eqPro21}
\end{align}
and (for $r\in \{0,1\}$)
\begin{align}
&\frac 2N \# \left\{ n < N \, n\equiv i\bmod 2 \,:\, \varepsilon_{2,k_{j_1}}(n) = b_{j_1},\,
1\le j_1\le d_1,\, \varepsilon_{3,\ell_{j_2}}(2^Kn+r) = c_{j_2},\, 1\le j_2 \le d_2  \right\}
\nonumber\\
&=
 \frac 1{2^{d_1}3^{d_2}} + O\left((\log N)^{-\lambda}\right)
\label{eqPro21-2}
\end{align}
uniformly for $0\le K \le c_2\log N$, $b_{j_1} \in \{0,1\}$, $c_{j_2}\in \{0,1,2\}$  and $k_{j_1}$, $\ell_i$ in the given ranges,
where the implicit constant of the error term may depend on 
$h_1$, $h_2$, and on $\lambda$.
\end{lemma}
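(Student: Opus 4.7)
The plan is to Fourier-expand each digit-indicator, extract the main term from the trivial character, and bound the remaining exponential sums by adapting the proof of Proposition~\ref{prp_1}. I would detect each condition via
\[
\mathbf{1}[\varepsilon_{2,k}(m)=b]=\tfrac12\sum_{u\in\{0,1\}}(-1)^{u(\varepsilon_{2,k}(m)-b)},\quad
\mathbf{1}[\varepsilon_{3,\ell}(n)=c]=\tfrac13\sum_{v\in\{0,1,2\}}\e\!\bigl(v(\varepsilon_{3,\ell}(n)-c)/3\bigr),
\]
expand the resulting $(d_1{+}d_2)$-fold product, and sum over $n<N$, $n\equiv i\bmod 2$. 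The trivial character $(u,v)=(\mathbf{0},\mathbf{0})$ contributes $\tfrac{N}{2\cdot 2^{d_1}3^{d_2}}+O(1)$, which after normalization by $2/N$ yields the main term $1/(2^{d_1}3^{d_2})$. The $2^{d_1}3^{d_2}-1$ non-trivial characters contribute exponential sums
\[
\Sigma(u,v)=\sum_{\substack{n<N\\ n\equiv i\,(\mathrm{mod}\,2)}}(-1)^{\sum_{j\in A}\varepsilon_{2,k_j}(3^Kn)}\,\e\!\Bigl(\tfrac13\sum_{i\in B}v_i\varepsilon_{3,\ell_i}(n)\Bigr),
\]
where $A=\{j:u_j=1\}$, $B=\{i:v_i\neq 0\}$, and $(A,B)\neq(\emptyset,\emptyset)$. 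Since the number of such characters depends only on $d_1,d_2$, it suffices to prove $\Sigma(u,v)\ll N(\log N)^{-\lambda}$ uniformly.

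Via the identity $\varepsilon_{2,k}(m)=s_2^{[0,k+1)}(m)-s_2^{[0,k)}(m)$ and its base-$3$ analogue, the phase of $\Sigma(u,v)$ rewrites as a linear combination $\sum_r \alpha_r\, s_2^{[0,b_r)}(3^Kn)+\sum_s \beta_s\, s_3^{[0,c_s)}(n)$ with $\alpha_r\in\tfrac12\mathbb Z$, $\beta_s\in\tfrac13\mathbb Z$ (not all integers, by non-triviality of $(u,v)$), and truncation levels $b_r\in\{k_j,k_j{+}1\}$, $c_s\in\{\ell_i,\ell_i{+}1\}$. The bound $\Sigma(u,v)\ll N(\log N)^{-\lambda}$ then follows by applying the van~der~Corput, carry-lemma, and digit-elimination machinery used for Proposition~\ref{prp_1} to this truncated setting: non-triviality of $(u,v)$ forces $\|\alpha_r\|\ge\tfrac12$ or $\|\beta_s\|\ge\tfrac13$ for some index, which engages Lemma~\ref{lem_gamma_tail} and the decay mechanism of Proposition~\ref{prp_1} at full strength and in fact produces decay of order $\exp(-c\log N)$, much stronger than the required $(\log N)^{-\lambda}$, comfortably absorbing the $O(1)$-many character terms.

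The ``inner-position'' assumption $N_j^\eta\le k_j,\ell_i\le N_j-N_j^\eta$ is essential: it gives enough room above each truncation level to iterate van~der~Corput and digit-elimination, and enough room below for the Diophantine lemmas~\ref{CorX}, \ref{LeDio3}, and \ref{LeDio4} to supply the required lower bounds on the linear forms in powers of $2$ and $3$ that arise. The principal obstacle is the simultaneous presence of several truncation levels $b_r$ and $c_s$: the linear forms to bound no longer have the two-term shape of Lemma~\ref{CorX}, but the multi-term generalization provided by Lemma~\ref{LeDio4} is designed precisely for this situation. Finally, \eqref{eqPro21-2} is proved by the symmetric argument with the roles of $2$ and $3$ interchanged, using rarefaction by $2^K$ and the corresponding analogue with $s_3(2^Kn+r)$.
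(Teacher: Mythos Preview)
Your character expansion correctly isolates the main term, but the plan for the nontrivial characters has a real gap and diverges substantially from how the paper proceeds.

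\textbf{The gap.} After rewriting the phase as a linear combination $\sum_r\alpha_r\,s_2^{[0,b_r)}(3^Kn)+\sum_s\beta_s\,s_3^{[0,c_s)}(n)$ with \emph{several} truncation levels $b_r,c_s$, you invoke ``the machinery of Proposition~\ref{prp_1}''. But that proposition (and its proof) treats a single untruncated $s_2(3^Kn)$ together with a single $s_3(n)$; nothing in the van der Corput / carry-lemma / odd-elimination chain is set up for a simultaneous linear combination of $2d_1$ truncated binary digit sums and $2d_2$ truncated ternary ones. Extending the argument to this multi-level setting is not a routine adaptation: the digit-elimination steps are designed around one window $[0,\lambda_2)$, and the Gowers-norm endgame requires a single frequency $\vartheta$. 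Your claim of decay $\exp(-c\log N)$ is therefore unsupported; at best one might hope for $\exp(-c(\log N)^\eta)$ coming from the $N_j^\eta$ margins, and even that would need a genuinely new argument.

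\textbf{What the paper does instead.} The paper exploits the elementary fact that $\varepsilon_{q,k}(n)=b$ is equivalent to $\{n/q^{k+1}\}\in[b/q,(b+1)/q)$. Each digit indicator is replaced by a smoothed trapezoid $f_{b,q,\Delta}$ whose Fourier coefficients decay like $\min(1/|m|,1/(\Delta m^2))$; the product $t(y_1,y_2)$ then has a Fourier expansion in $e(\mathbf h\!\cdot\!\mathbf v\,y_1+\mathbf r\!\cdot\!\mathbf w\,y_2)$. Substituting $y_1=3^Kn$, $y_2=n$ yields \emph{purely linear} exponential sums $\sum_{n}e(\alpha n)$ with
\[
\alpha=3^K\sum_{j_1}h_{j_1}2^{-k_{j_1}-1}+\sum_{j_2}r_{j_2}3^{-\ell_{j_2}-1},
\]
which are bounded by $1/\|2\alpha\|$. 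The entire difficulty is then a Diophantine one: showing $\|2\alpha\|\gg(\log N)^{\lambda}/N$, which is Lemma~\ref{Leexponentialsums}, proved case by case using Lemma~\ref{CorX} (Baker) and Lemmas~\ref{LeDio3}--\ref{LeDio4} (Schlickewei). The smoothing error is controlled by discrepancy via Erd\H os--Tur\'an, again reducing to the same linear sums. No van der Corput, no digit elimination, no Gowers norms are needed for Lemma~\ref{LeKey}.

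In short, your detour through truncated digit sums throws away the linearity in $n$ that makes the problem tractable; the paper keeps it and reduces everything to lower bounds for $\|2\alpha\|$.
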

We will prove this lemma in Section~\ref{sec:LeKey} with the help of exponential sum estimates
provided in Section~\ref{sec:exponentialsums}.

The proof of Proposition~\ref{Pro2} is then given in Section~\ref{sec:proofPro2}.

\subsection{Exponential Sums}\label{sec:exponentialsums}

The essential part of the proof of Lemma~\ref{LeKey} are upper bounds 
for exponential sums of the form
\begin{equation}\label{eqSexpsumdef}
S = \sum_{n< N,\ n\equiv i \bmod 2} e \left( \left( 3^K \sum_{j_1=1}^{d_1} h_{j_1} 2^{-k_{j_1}-1} +   \sum_{j_2=1}^{d_2} r_{j_2} 3^{-\ell_{j_2}-1} \right) n \right),
\end{equation}
where $h_{j_1}$ are integers not divisible by $2$, and $r_{j_2}$ 
are integers not divisible by $3$ and
$h_{j_1}$ and $r_{j_2}$ 
are absolutely upper bounded by $(\log N)^{\lambda_0}$ for some $\lambda_0> 0$.
For the sake of shortness we only prove the relation (\ref{eqPro21}). 
The corresponding relation (\ref{eqPro21-2}) can be proved in the same way
by interchanging the r\^oles of $2$ and $3$.

Clearly we have
\begin{equation}\label{eqexpsumtrivial}
\left| \sum_{n< N,\, n \equiv i \bmod 2} e(\alpha n)\right| \le \frac 1{2\|2\alpha\|}.
\end{equation}
Hence we have to find lower bounds for $\| 2\alpha \|$, where
\[
\alpha = 3^K \sum_{j_1=1}^{d_1} h_{j_1} 2^{-k_{j_1}-1} +   \sum_{j_2=1}^{d_2} r_{j_2} 3^{-m_{j_2}-1}.
\]
Actually, we will prove that (uniformly under the above mentioned assumptions) 
\begin{equation}\label{eqalphalowerbound}
\| 2 \alpha \| \gg    \frac{ (\log N)^\lambda} N
\end{equation}
for any given constant $\lambda> 0$. This implies then the following property.

\begin{lemma}\label{Leexponentialsums}
Suppose that $0 \le K \le c_2 \log N$, where $c_2 >0$.
Set $N_1 = \lfloor \log_{2 } (3^K N) \rfloor$ and $N_2 =  \lfloor \log_{3 }N \rfloor$ 
let $\lambda_0>0,\lambda>0$ be arbitrary constants and let $d_1$, $d_2$ be positive integers. 
Then for integers
\[
N_1^{\eta} \le k_1 < k_2 < \cdots < k_{d_1} \le  N_1 - N_1^{\eta},
\]
\[
N_2^{\eta} \le \ell_1 < \ell_2 < \cdots  < \ell_{d_2} \le  N_2 - N_2^{\eta}
\]
and for odd integers $h_1,\ldots,h_{d_2}$ and integers $r_1,\ldots, r_{d_2}$ that are
not divisible by $3$ with 
\[
\max_{1\le j_1\le d_1} |k_{j_1}| \le (\log N)^{2\lambda_0} \quad \mbox{and}\quad
\max_{1\le j_2\le d_2} |r_{j_2}| \le (\log N)^{2\lambda_0}
\]
we have the uniform upper bouund
\begin{equation}\label{eqSbound}
\max |S| \ll N (\log N)^{-\lambda},
\end{equation}
where $S$ denotes the exponential sum (\ref{eqSexpsumdef}).
\end{lemma}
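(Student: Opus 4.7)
The plan is to combine the trivial bound \eqref{eqexpsumtrivial} with the Diophantine lower bound \eqref{eqalphalowerbound} for $\|2\alpha\|$: once the latter is established, the estimate \eqref{eqSbound} is immediate, since $|S|\le 1/(2\|2\alpha\|)\le N/(2(\log N)^\lambda)$. Thus the whole content of the lemma reduces to showing that, uniformly under the stated hypotheses, $\|2\alpha\|\gg (\log N)^\lambda/N$.

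To do this I would let $H$ be the nearest integer to $2\alpha$ and multiply $2\alpha - H$ through by the common denominator $2^{k_{d_1}}3^{\ell_{d_2}+1}$. This produces the integer expression
\[
E = 3^{K+\ell_{d_2}+1}\sum_{j_1=1}^{d_1}h_{j_1}\,2^{k_{d_1}-k_{j_1}} + 2^{k_{d_1}}\sum_{j_2=1}^{d_2}(2r_{j_2})\,3^{\ell_{d_2}-\ell_{j_2}} - 2^{k_{d_1}}3^{\ell_{d_2}+1}H,
\]
which matches the shape appearing in Lemma~\ref{LeDio4} with $q_1=3$, $q_2=2$, and exponents $k=K+\ell_{d_2}+1$, $m_{j_1}=k_{d_1}-k_{j_1}$, $n_{j_2}=\ell_{d_2}-\ell_{j_2}$, $N=\ell_{d_2}+1$, $M=k_{d_1}$. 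The divisibility hypotheses of that lemma hold: the $h_{j_1}$ are odd (so not divisible by $q_2$), the integers $2r_{j_2}$ are not divisible by $3$, and $H$ can be taken coprime to $6$ after extracting any common $2$- or $3$-parts. The ordering $m_1>\cdots>m_{d_1}=0$, $n_1>\cdots>n_{d_2}=0$ and the dominance conditions $M\ge m_1$, $N\ge n_1$ follow directly from the ranges \eqref{eqrange2}--\eqref{eqrange2-2}; a mod-$2$ reduction using the oddness of $h_{d_1}$ shows $E\ne 0$.

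Applying Lemma~\ref{LeDio4} (and Lemma~\ref{LeDio3} in the degenerate case $H=0$) gives $|E|\gg(\text{size})^{1-\delta}/(\prod|h_{j_1}|\prod|2r_{j_2}|\cdot|H|)$ for arbitrary $\delta>0$. Dividing back by $2^{k_{d_1}}3^{\ell_{d_2}+1}$ and using the bounds $|h_{j_1}|,|r_{j_2}|\ll(\log N)^{2\lambda_0}$ together with the crude estimate $|H|\le|2\alpha|+1\ll N^{O(1)}$, I obtain
\[
|2\alpha - H|\gg\bigl(2^{k_{d_1}}3^{\ell_{d_2}+1}\bigr)^{-\delta}(\log N)^{-C}
\]
for a constant $C=C(d_1,d_2,\lambda_0)$. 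Since $2^{k_{d_1}}3^{\ell_{d_2}+1}\ll N^{1+O(c_2)}$, choosing $\delta$ small enough depending on $\lambda$ and $c_2$ makes the right-hand side much larger than $(\log N)^\lambda/N$ for $N$ large, which is what was needed. The main obstacle is the Diophantine non-degeneracy: one must verify that our specific tuple does not fall into one of the finitely many exceptional linear subspaces furnished by Schlickewei's subspace theorem. This is precisely what the separation conditions $k_1\ge N_1^\eta$, $k_{d_1}\le N_1-N_1^\eta$, $\ell_1\ge N_2^\eta$, $\ell_{d_2}\le N_2-N_2^\eta$ are designed to handle, and it is carried out by the same inductive unwinding of near-exceptional configurations that underlies the proof of Lemma~\ref{LeDio4}.
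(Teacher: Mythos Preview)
Your reduction to the lower bound \eqref{eqalphalowerbound} is correct, but the claimed application of Lemma~\ref{LeDio4} does not go through. After clearing denominators you obtain
\[
E \;=\; 3^{K+\ell_{d_2}+1}\sum_{j_1}h_{j_1}2^{\,k_{d_1}-k_{j_1}}
\;+\;2^{\,k_{d_1}+1}\sum_{j_2}r_{j_2}3^{\,\ell_{d_2}-\ell_{j_2}}
\;-\;2^{\,k_{d_1}}3^{\,\ell_{d_2}+1}H,
\]
and with $q_1=3$, $q_2=2$ the middle block is not of the shape $q_1^{n_1}r_1+\cdots+q_1^{n_{d_2}}r_{d_2}$ with \emph{small} coefficients: one is forced to take $\tilde r_{j_2}=2^{\,k_{d_1}+1}r_{j_2}$. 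The denominator in the bound of Lemma~\ref{LeDio4} then carries the factor $\prod_{j_2}|\tilde r_{j_2}|=(2^{\,k_{d_1}+1})^{d_2}\prod_{j_2}|r_{j_2}|$, which can be as large as $(3^KN)^{d_2}$. Dividing back by $2^{k_{d_1}}3^{\ell_{d_2}+1}$ therefore leaves you with $|2\alpha-H|\gg N^{-d_2(1+O(c_2))-O(\delta)}(\log N)^{-C}$, not the $N^{-O(\delta)}$ you wrote; this is far weaker than $(\log N)^{\lambda}/N$ once $d_2\ge 1$. Swapping $q_1\leftrightarrow q_2$ does not help: the same phenomenon occurs with the factor $3^{K+\ell_{d_2}+1}$ absorbed into the $h$-coefficients. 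The passing remark that Lemma~\ref{LeDio3} covers $H=0$ has the same defect, since the collapsed ``$H$'' in that lemma then equals $\sum_{j_2}r_{j_2}3^{\ell_{d_2}-\ell_{j_2}}$, of size up to $N$.

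The paper avoids this by treating $H=0$ and $H\neq 0$ with different tools. For $H=0$ the essential ingredient is \emph{Baker's theorem} via Lemma~\ref{CorX}, whose error term depends on $\log|m_1|,\log|m_2|$ rather than $|m_1|,|m_2|$. A pigeonhole argument first locates a scale $(\log N)^{k\delta}$ that avoids all consecutive gaps $k_{j_1+1}-k_{j_1}$ and $\ell_{j_2+1}-\ell_{j_2}$; the leading pieces are then collapsed into single numerators $a,b$ with $\log|a|,\log|b|\ll(\log N)^{k\delta}$, and the Baker bound $e^{-C'\log\log N\cdot(\log N)^{k\delta}}$ is dominated by the margin $e^{c(\log N)^{\eta}}$ supplied by the range restrictions. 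Only in the case $H\neq 0$ is the subspace theorem invoked, and there one first writes $H=3^LH'$ and tracks the cancellation of $3^K$ against $|H|$ explicitly (as in the $d_2=0$ subcase), rather than absorbing large powers into the coefficients. Your argument is missing the Baker step entirely and misjudges the cost of the subspace-theorem step.
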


We will distinguish between several cases. 

\subsubsection{$d_1 = 0$}

In this case $\alpha$ simplifies to 
\[
\alpha = \sum_{j_2=1}^{d_2} r_{j_2} 3^{-\ell_{j_2}-1}.
\]
By assumption (\ref{eqrange2-2}) we have $N_2^\eta \le \ell_{j_2} \le N_2 - N_2^\eta$, 
$|r_{j_2}| \le (\log N)^\lambda$ and $r_{j_2}$ is not divisible by $3$, $1\le j_2 \le d_2$. Consequently
\[
\|2\alpha\| = 2|\alpha| =  \left|2 \sum_{j_2=1}^{d_2} r_i 3^{-\ell_i-1} \right|  
\ge \frac 2{3^{\ell_{d_2} + 1}}
 \gg \frac{ e^{c (\log N)^\eta} } N \gg \frac{ (\log N)^\lambda} N
\]
for some constant $c> 0$. 

\subsubsection{$d_2 = 0$}\label{subsectiond2=0}

Here we have
\[
\alpha =  3^K \sum_{j_1=1}^{d_1} h_{j_1} 2^{-k_{j_1}-1} ,
\]
where $N_1^\eta \le k_{j_1} \le N_1 - N_1^\eta$, 
$|h_{j_1}| \le (\log N)^\lambda$ and $h_{j_1}$ is not divisible by $2$. Recall that $N_1 = \lfloor \log_2 (3^K N) \rfloor$.
Let $H$ denote the nearest integer to $\alpha$, that is,
\[
\| 2\alpha\| = | 2\alpha - H|.
\]

In a first step we assume that $H = 0$. Here we can argue similiarly as in the previous case:
\[
\|2\alpha\| = 2|\alpha| =  3^K \left|\sum_{j_1=1}^{d_1} h_{j_1} 2^{-k_{j_1}} \right|  \ge    \frac {3^K}{2^{k_{d_1}}}
 \gg \frac{ 3^K  e^{c (\log N)^\eta} } {3^K N}  \gg \frac{ (\log N)^\lambda} N
\]
and we are done.

Next suppose that $H\ne 0$. Since $\| 2\alpha\| \le 1$ it follows that 
\[
|H| \le 1+ 3^K (\log N)^\lambda 2^{-k_1}.
\]
Set $H = 3^L H'$ with $(3,H') = 1$. Since $|H|\le 3^K$ it follows that $L\le K$. 
Furthermore set
\begin{align*}
D &= {\rm gcd} \left( 3^{K-L} h_1 2^{k_{d_1}-k_1}, 3^{K-L}  h_2 2^{k_{d_1-1}-k_1},\ldots,  3^{K-L} h_{d_1}, H' 2^{k_{d_1}+1} \right)  \\
&=  {\rm gcd} \left( h_1, h_2,\ldots, h_{d_1}, H' \right).
\end{align*}
Note that $H'$ is not divisible by $3$ and $h_d$ not by $2$. Thus, the last equality holds.
We also set
\[
h_{j_1}' = h_{j_1}/D \quad \mbox{and} \quad  H'' = H'/D.
\]
Then we have
\[
\|2\alpha\| = \frac{D 3^L}{2^{k_{d_1}}} \left| 3^{K-L} h_1' 2^{k_{d_1}-k_1} + 3^{K-L}  h_2' 2^{k_{d_1-1}-k_1} + \cdots + 
3^{K-L} h_{d_1}' -  H'' 2^{k_{d_1}+1}   \right|.
\] 
At this level we can apply Lemma~\ref{LeDio3} and we obtain 
\begin{align*}
\|\alpha\| &\gg  \frac{D 3^L}{2^{k_{d_1}+1}} \frac{ \left( 3^{K-L}  2^{k_{d_1}-k_1} \right)^{1-\delta} }
{ |h_1' \cdots h_{d_1}'\, H'' |} \\
&\gg  \frac{D 3^L}{2^{k_{d_1}+1}}  \frac{ \left( 3^{K-L}  2^{k_{d_1}-k_1} \right)^{1-\delta} }
{ (\log N)^{d_1\lambda}  3^{K-L} /(D 2^{k_1} )  } \\
&\gg \frac { 3^L} { \left( 3^{K-L}  2^{k_{d_1}-k_1} \right)^{\delta} } \\
&\gg \frac {1} { \left( 3^{K}  2^{k_{d_1}-k_1} \right)^{\delta} }.
\end{align*}
Since 
\[
3^{K}  2^{k_{d_1}-k_1} \le 3^{2K} N \le N^{1+2c_2} 
\]
we, thus, obtain
\[
\|2\alpha\| \gg N^{-\delta(1+2c_2)}.
\]
Hence by choosing $\delta = (1+2c_2)/2$ we get a proper lower bound 
$N^{-1/2} \gg (\log N)^\lambda / N$.

\subsubsection{$d_1 > 0$ and $d_2 > 0$}

As in the previous case let $H$  be the nearest integer to $2\alpha$, that is, 
$\| 2\alpha \| = |2\alpha - H|$.

First we consider the case $H=0$. Here we have
\[
\| \alpha\| =  \left| 3^K \sum_{j_1=1}^{d_1} h_{j_1} 2^{-k_{j_1}} +  
 2\sum_{j_2=1}^{d_2} r_i 3^{-m_i-1} \right|
\]
In this case we proceed precisely as in the paper \cite{Drmota2001}, where we apply Lemma~\ref{CorX} appropriately.

Without loss of generality we can assume that $h_{j_1}\ne 0$ and $r_{j_2}\ne 0$ (for all $j$ and $i$). 
Otherwise we reduce $d_1$ and $d_2$ accordingly.

We set $\delta = \eta/(d_1+d_2)$. Clearly there exists $0\le k \le d_1 +d_2-1$ such that
\[
h_{j_1+1} - h_{j_1} \not \in \left[ (\log N)^{k\delta}, (\log N)^{(k+1) \delta} \right) \quad (1\le j_1 < d_1)
\]
and
\[
r_{j_2+1} - r_{j_2} \not \in \left[ (\log N)^{k\delta}, (\log N)^{(k+1) \delta} \right)\quad (1\le j_2 < d_2).
\]

We first suppose that 
\[
h_{j_1+1} - h_{j_1} \le (\log N)^{k\delta} \quad (1\le j_1 < d_1) \quad\mbox{and}\quad
r_{i+1} - r_{j_2} \le (\log N)^{k\delta} \quad (1\le j_2 < d_2).
\]
Then we can represent $\alpha$ as
\[
\alpha = a 3^K 2^{-k_{d_2}-1} + b 3^{-\ell_{d_2}-1},
\]
where 
\[
a = \sum_{j_1=1}^{d_1} h_{j_1} 2^{k_{d_1}-k_j} \quad\mbox{and}\quad  b =  \sum_{j_2=1}^{d_2} r_{j_2} 2^{\ell_{d_2}-\ell_{j_2}} 
\]
satisfy
\[
\log|a| \ll (\log N)^{k\delta} \quad\mbox{and}\quad  \log|b| \ll (\log N)^{k\delta}.
\]
By a direct application of Lemma~\ref{CorX} we obtain
\[
2|\alpha| = \left| a 3^K 2^{-k_{d_1}} + 2b 3^{-\ell_{d_2}-1} \right| 
\ge \max \left( \left| a 3^K 2^{-k_{d_1}} \right|, \left|2 b 3^{-\ell_{d_2}-1} \right| \right)
e^{-C' \log\log N (\log N)^{k\delta} }
\]
for some constant $C' > 0$. Clearly this implies
\[
|2\alpha| \gg \frac{e^{c(\log N)^\eta}}{N} e^{-C' \log\log N (\log N)^{k\delta} }
\gg \frac{e^{c'(\log N)^\eta}}{N} \gg \frac{ (\log N)^\lambda}{N}
\]
for some constants $c>0, c'>0$.

In general we assume that for some $s_1\le d_1$ and $s_2\le d_2$
\[
h_{j_1+1} - h_j \le (\log N)^{k\delta} \quad (1\le j_1 < s_1) \quad\mbox{and}\quad
r_{j_2+1} - r_i \le (\log N)^{k\delta} \quad (1\le j_2 < s_2)
\]
but
\[
h_{s_1+1} - h_{s_1} > (\log N)^{(k+1)\delta} \quad\mbox{and}\quad
r_{s_2+1} - r_{s_2} > (\log N)^{(k+1)\delta}.
\]
Here we set
\[
a = \sum_{j_1=1}^{s_1} h_{j_1} 2^{k_{s_1}-k_{j_1}} \quad\mbox{and}\quad  b =  \sum_{j_2=1}^{s_2} r_{j_2} 2^{\ell_{s_2}-\ell_{j_2}} 
\]
and use the upper bounds
\[
\sum_{j_1=s_1+1}^{d_1} h_{j_1} 2^{-k_{j_1}-1}  \ll (\log N)^{2\lambda_0} 2^{- k_{s_1}-(\log N)^{(k+1)\delta}} 
\quad\mbox{and}\quad
\sum_{j_2=s_2+1}^{d_2} r_{j_2} 3^{-\ell_{j_2}-1}  \ll (\log N)^{2\lambda_0} 3^{-\ell_{s_2} - (\log N)^{(k+1)\delta}} 
\]
to obtain the lower bound for 
\begin{align*}
|2\alpha| &\ge \left| a 3^K 2^{-k_{s_1}} +  2 b 3^{-\ell_{s_2}-1} \right| - 
\left| 3^K \sum_{j_1=s_1+1}^{d_1} h_{j_1} 2^{-k_{j_1}} \right| - \left| 2 \sum_{j_2=s_2+1}^{d_2} r_{j_2} 3^{-\ell_{j_2}-1}  \right| \\
&\ge \max\left( \left| a 3^K 2^{-k_{s_1}} \right|, \left|  2 b 3^{-\ell_{s_2}-1} \right| \right)
e^{-C' \log\log N (\log N)^{k\delta} } \\
&- O\left(
 \max\left( \left| 3^K 2^{-k_{s_1}} \right|, \left| 2\, 3^{-\ell_{s_2}-1} \right| \right) 
(\log N)^{2\lambda_0} e^{-c (\log N)^{(k+1)\delta}}
      \right) \\
&\gg \max\left( \left| a 3^K 2^{-k_{s_1}} \right|, \left| 2 b 3^{-\ell_{s_2}-1} \right| \right)
e^{-C' \log\log N (\log N)^{k\delta} } \\
&\gg \frac{e^{c'(\log N)^\eta}}{N} \gg \frac{ (\log N)^\lambda}{N}.
\end{align*}
This completes the case $H=0$.

In the case $H\ne 0$ we proceed very similiarly to the case $d_2 = 0$. 
Since $\| 2 \alpha\| \le 1$ we certainly have the upper bound
\[
|H| \le 1+ (\log N)^\lambda \frac{3^K}{2^{k_1}} 
\]
We now reduce the general case to the coprime one and apply Lemma~\ref{LeDio4}.

\medskip

This completes the proof of Lemma~\ref{Leexponentialsums}.

\subsection{Proof of Lemma~\ref{LeKey}}\label{sec:LeKey}

We follow \cite{BassilyKatai1995} and \cite{Drmota2001}. 
Let $f_{b,q,\Delta}(x)$ be defined by
\[
f_{b,q,\Delta}(x) := \frac 1{\Delta} \int_{-\Delta/2}^{\Delta/2} 
{\bf 1}_{[\frac bq,\frac{b+1}q]}(\{x+z\})\, dz,
\]
where ${\bf 1}_A$ denotes the characteristic function of the set $A$
and $\{x\} = x-[x]$ the fractional part of $x$. The Fourier
coefficients of the Fourier 
series $f_{b,q,\Delta}(x) = \sum_{m\in\ZZ} d_{m,b,q,\Delta} e(mx)$
are given by
\[
d_{0,b,q,\Delta} = \frac 1q
\]
and for $m\ne 0$ by 
\[
d_{m,b,q,\Delta} =  \frac{e\left(-\frac {mb}q\right)-
e\left(-\frac {m(b+1)}q\right)}{2\pi i m} \cdot
\frac{e\left(\frac {m\Delta}2\right)-e\left(-\frac {m\Delta}2\right)}
{2\pi i m\Delta}.
\]
Note that $d_{m,b,q,\Delta} = 0$ if $m\ne 0$ and $m\equiv 0\bmod q$ and that
\[
|d_{m,b,q,\Delta}| \le \min\left( \frac 1{\pi |m|},\frac 1{\Delta\pi m^2}\right). 
\]
By definition we have $0\le f_{b,q,\Delta}(x)\le 1$ and
\[
f_{b,q,\Delta}(x) = \left\{ \begin{array}{cl}
1 & \mbox{if } x\in \left[ \frac bq + \Delta, \frac{b+1}q -\Delta\right], \\
0 & \mbox{if } x\in [0,1] \setminus \left[ \frac bq - \Delta, \frac{b+1}q +\Delta\right].
\end{array}\right.
\]
So if we set
\[
t(y_1,y_2) := \prod_{j_1=1}^{d_1} f_{b_{j_1},2,\Delta}
\left( \frac {y_1}{2^{k_{j_1}+1}} \right) \prod_{j_2=1}^{d_2} f_{c_{j_2},3,\Delta}
\left( \frac {y_2}{3^{\ell_{j_2}+1}} \right)
\]
then we get for $\Delta< 1/12$
\begin{eqnarray*}
&&  \left|\# \left\{ n < N,\, n \equiv i \bmod 2
 \,|\, \varepsilon_{2,k_{j_1}}(3^K n) = b_{j_1},\ 1\le j_1\le d_1,\ 
\varepsilon_{3,\ell_{j_2}}(n) = c_{j_2},\ 1\le j_2\le d_2 \right\} 
- \sum_{n<N,\, n\equiv i \bmod 2} t(3^K n, n)\right| \\
&&\quad \le 
\sum_{j_1=1}^{d_1}   \#\left\{n< N\left| \left\{\frac{3^K n}{2^{k_{j_1}+1}} \right\} 
\in U_{b_{j_1},2,\Delta} \right.\right\} 
+ \sum_{j_2=1}^{d_2} 
\#\left\{n< N\left| \left\{\frac{n}{3^{\ell_{j_2}+1}} \right\} 
\in U_{c_{j_2},3,\Delta} \right.\right\} \\
&&\quad \ll \Delta N +  N \sum_{j_1=1}^{d_1} D_1(k_{j_1}) + N \sum_{j_2=1}^{d_2}  D_2(\ell_{j_2})
\end{eqnarray*}
where 
\[
U_{b,q,\Delta} := [0,\Delta]\cup \bigcup_{b=1}^{q-1} \left[ \frac bq-\Delta,\frac bq + \Delta \right]
\cup [1-\Delta,1].
\]
and $D_1(k_{j_1})$ and $D_2(\ell_{j_2})$, respectively, denote the discrepancies of the 
sequences $( 3^K n 2^{-k_{j_1}-1} \bmod 1 : n< N)$ and $( n 3^{-\ell_{j_2}-1} \bmod 1 : n< N)$.
The discrepancies $D_1(k_{j_1})$ and $D_2(\ell_{j_2})$
that can be bounded with the help of the Erd\H os-Turan inequality and exponential sum estimates.

For $D_2(\ell_{j_2})$ we directly obtain 
\begin{align*}
D_2(\ell_{j_2}) &\ll \frac 1H + \sum_{h=1}^H \frac 1h \left| \frac 1N \sum_{n<N} e\left( h  n 3^{-\ell_{j_2}-1} \right) \right| \\
&\ll \frac 1H +  \log H \frac{3^{\ell_{j_2}}}{N} \ll (\log N)^{-\lambda}
\end{align*}
by using the estimate (\ref{eqexpsumtrivial}), setting $H = (\log N)^{\lambda_0}$
(for some $\lambda_0 \ge \lambda$)  
and applying the bound $\ell_{j_2} \le N_2 - N_2^\eta$.

For $D_1(k_{j_1})$ we have to be slightly more careful but we can use the bounds provided in 
Section~\ref{subsectiond2=0} (note that $\lambda$ is replaced by $\lambda+1$):
\[
\sum_{n<N} e\left( h  3^K n 2^{-k_{j_1}-1} \right) \ll  N (\log N)^{-\lambda-1}
\]
Obviously this leads to 
\begin{align*}
D_1(k_{j_1}) &\ll \frac 1H + \sum_{h=1}^H \frac 1h \left| \frac 1N \sum_{n<N} e\left( h 3^K n 2^{-k_{j_1}-1} \right) \right| \\
&\ll \frac 1H +  \log H (\log N)^{-\lambda-1} \ll (\log N)^{-\lambda}.
\end{align*}

Summing up, by setting $\Delta = (\log N)^{-\lambda_0}$ for some $\lambda_0 \ge \lambda$ we get
\begin{eqnarray*}
&&  \left|\# \left\{ n < N \,|\, \varepsilon_{2,k_{j_1}}(3^K n) = b_{j_1},\ 1\le j_1\le d_1,\ 
\varepsilon_{3,\ell_{j_2}}(n) = c_{j_2},\ 1\le j_2\le d_2 \right\} 
- \sum_{n<N} t(3^K n, n)\right| \\
&&\quad \ll N (\log N)^{-\lambda}.
\end{eqnarray*}

For convenience, let 
${\bf h} = (h_1, \ldots, h_{d_1})$ and ${\bf r}= (r_1, \ldots, r_{d_2})$
denote $d_1$- and $d_2$-dimensional integer vectors and
${\bf v} = \left( 2^{-k_1-1}, \ldots, 2^{-k_{d_1}-1}  \right)$,
${\bf w} = \left( 3^{-\ell_1-1}, \ldots, 3^{-\ell_{d_2}-1}  \right)$.
Furthermore set
\[
T_{{\bf h}, {\bf r}} := \prod_{j_1=1}^{d_1} d_{h_{j_1},b_{j_1},2,\Delta}
\prod_{j_2=1}^{d_2} d_{r_{j_2},c_{j_2},3,\Delta}.
\]
Then $t(y_1,y_2)$ has the Fourier series expansion
\[
t(y_1,y_2) = \sum_{{\bf h}, {\bf r}} T_{{\bf h}, {\bf r}}
e\left( {\bf h}\cdot {\bf v} \,  y_1 +  {\bf r}\cdot {\bf w} \, y_2 \right).
\]
Thus, we are led to consider the  sums
\begin{equation}\label{eqS1}
\sum_{n< N,\, n\equiv i \bmod 2} t(3^K n, n) 
=\sum_{{\bf h}, {\bf r}} T_{{\bf h}, {\bf r}}
\sum_{n<N } e\left( \left( 3^K {\bf h}\cdot {\bf v}  +  {\bf r}\cdot {\bf w} \right) n \right)
\end{equation}
If ${\bf h} = {\bf r} = {\bf 0}$ then
\[
T_{ {\bf 0}, {\bf 0}}
\sum_{n<N,\, n\equiv i \bmod 2} e\left( 0 \right)
= \frac {N/2 + O(1)}{2^{d_1} 3^{d_2}}
\]
which provides the leading term. Furthermore, 
we have (for $\Delta = (\log N)^{\lambda_0}$) the estimate
\[
\sum_{({\bf h},{\bf r})  \ne ({\bf 0},{\bf 0}) }  | T_{{\bf h}, {\bf r}}| \ll 
(2 + 2 \log (1/\Delta))^{d_1+d_2} \ll (\log\log N)^{d_1+d_2}
\]
and
\[
\sum_{  \|({\bf h},{\bf r})\| \ge (\log N)^{2\lambda_0} }  | T_{{\bf h}, {\bf r}}|  \ll (\log N)^{-\lambda_0}.
\]
Thus, 
\[
\sum_{n< N,\, n\equiv i \bmod 2} t(3^K n, n) = \frac {N/2 + O(1)}{2^{d_1} 3^{d_2}} 
+ O\left(  (\log\log N)^{d_1+d_2} (\log N)^{-\lambda} \right) 
+ O\left( (\log N)^{-\lambda_0}   \right)
\]
This completes the proof of Lemma~\ref{LeKey}.

\subsection{Completion of the proof of Proposition~\ref{Pro2}}\label{sec:proofPro2}

We finally show that Lemma~\ref{LeKey} implies Proposition~\ref{Pro2} (again we 
follows \cite{Drmota2001}).

The idea is to compare the distribution of $(s_2(3^K n),s_3(n))$, $n< N$, 
$n\equiv i\bmod 2$,  with the distribution
of independent pairs of sums of iid random variables. Let $Z_{2,j}$ be iid random variables 
that are uniformly distribution on $\{0,1\}$ and $Z_{3,j}$ iid random variables on $\{0,1,2\}$
that are also independent of $Z_{2,j}$. Then we consider the pair of random variables
\[
S_2(3^K N) = \sum_{j=0}^{N_1}  Z_{2,j} \quad \mbox{and}\quad  S_3(N) = \sum_{j=0}^{N_2}  Z_{3,j}
\]
and also the trucated versions
\[
\tilde S_2(3^K N) = \sum_{N_1^\eta \le j \le N_1-N_1^\eta}  Z_{2,j} \quad \mbox{and}\quad  
\tilde S_3(N) = \sum_{N_2^\eta \le j \le N_2-N_2^\eta}  Z_{3,j},
\]
Recall that $N_1 = \lfloor \log_{2 } (3^K N) \rfloor$ and $N_2 =  \lfloor \log_{3 }N \rfloor$
and that $0 < \eta < \frac 12$. 
We also set 
\[
\tilde N_1 = |\{j : N_1^\eta \le j \le N_1-N_1^\eta\}| = N_1 - 2 N_1^\eta + O(1)
\]
and
\[
\tilde N_2 = |\{j : N_2^\eta \le j \le N_2-N_2^\eta \}| = N_2 - 2 N_2^\eta + O(1).
\]

Clearly 
\[
\left| S_2(3^K N) - \tilde S_2(3^K N) \right| \ll (\log N)^\eta \quad \mbox{and}\quad 
\left| S_3(N) - \tilde S_3(K N) \right| \ll (\log N)^\eta.
\]
The normalized versions
\[
(Y_{1,3^K N},Y_{2,N}) =  \left( 
\frac{S_2(3^K N) - \frac 12 N_1}{\sqrt{\frac 14 \log_2(3^K N)}}, 
\frac{S_3(N) - N_2}{\sqrt{\frac 23 \log_3( N)}}
\right) 
\]
and
\[
(\tilde Y_{1,3^K N}, \tilde Y_{2,N}) = \left( 
\frac{\tilde S_2(3^K N) - \frac 12 \tilde N_1}{\sqrt{\frac 14 \log_2(3^K N)}}, 
\frac{\tilde S_3(N) - \tilde N_2}{\sqrt{\frac 23 \log_3( N)}}
\right) 
\]
converge then to the two-dimensional normal distribution $N({\bf 0}, {\bf I})$, where
${\bf I}$ denotes the identity matrix. In particular the characteristic functions converge:
\[
\lim_{N\to\infty} \mathbb{E} \left( e^{it_1 Y_{1,3^K N} + it_2 Y_{2,N} } \right) \to e^{-t_1^2/2- t_2^2/2}
\]
and 
\[
\lim_{N\to\infty} \mathbb{E} \left( e^{it_1 \tilde Y_{1,3^K N} + it_2 \tilde Y_{2,N} } \right) \to e^{-t_1^2/2- t_2^2/2}.
\]
Note that the convergence is uniform for $0\le K \le c_2 \log N$. 
Furthermore, we have convergence of all (joint) moments:
\begin{equation}\label{eqmomentsrel}
\lim_{N\to\infty} \mathbb{E}\left[  \tilde Y_{1,3^K N}^{d_1}  \tilde Y_{2,N}^{d_2}  \right]
\to \mu_{d_1} \mu_{d_2}, 
\end{equation}
where $\mu_d = (d-1)!!$ for even $d$ and $\mu_d = 0$ for odd $d$ denote the
moments of the standard normal distribution. Again the convergence is
uniform for $0\le K \le c_2 \log N$. These are standard exercises for sums of
independent random variables.

Next let $D_{2,j, K, N}$ denote the (random) $j$-th digit in the binary expansion of $3^K n$ 
and $D_{3,j,N}$ the (random) $j$-th digit in the ternary expansion of $n$ (again) if $n< N$ with
$n\equiv i \bmod 2$ is chosen uniformly at random.
Then the sum-of-digits functions 
\[
s_2(3^K n) = \sum_{j = 0}^{N_1} D_{2,j, K, N} \quad\mbox{and}\quad
s_3(n) = \sum_{j= 0}^{N_2} D_{3,j, N}
\]
model the random pair $(s_2(3^K N), s_3(n))$ if $n< N$ with $n\equiv i \bmod 2$ 
 is chosen uniformly at random. Again we also consider the 
truncated versions
\[
\tilde s_2(3^K n) = \sum_{N_1^\eta \le j\le N_1 - N_1^\eta} D_{2,j, K, N} \quad\mbox{and}\quad
\tilde s_3(n) = \sum_{N_2 \le j\le N_2 - N_2^\eta} D_{3,j, N}.
\]
and the normalized versions:
\[
(X_{1,3^K N},X_{2,N}) =  \left( 
\frac{s_2(3^K N) - \frac 12 N_1}{\sqrt{\frac 14 \log_2(3^K N)}}, 
\frac{s_3(N) - N_2}{\sqrt{\frac 23 \log_3( N)}}
\right) 
\]
and
\[
(\tilde X_{1,3^K N}, \tilde X_{2,N}) = \left( 
\frac{\tilde s_2(3^K N) - \frac 12 \tilde N_1}{\sqrt{\frac 14 \log_2(3^K N)}}, 
\frac{\tilde s_3(N) - \tilde N_2}{\sqrt{\frac 23 \log_3( N)}}
\right).
\]

\begin{lemma}\label{Lecomparemoments}
For every pair of non-negative integers $d_1,d_2$ and $i\in \{0,1\}$ we have
\[
\lim_{N\to\infty} 
\frac 2N 
\sum_{n< N,\, n\equiv i \bmod 2}
\left( \frac{\tilde s_2(3^K n) - \frac 12 \tilde N_1}
{\sqrt{\frac 14 \log_2(3^K N)} }  \right)^{d_1}
\left( \frac{\tilde s_3(n) - \tilde N_2}
{\sqrt{\frac 23 \log_3( N)} }  \right)^{d_2} = \mu_{d_1} \mu_{d_2}
\]
uniformly for $0\le K \le c_2 \log N$.
\end{lemma}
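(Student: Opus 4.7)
The plan is to compute the joint moment on the left-hand side by expanding the two powers and applying Lemma~\ref{LeKey} term by term. Introduce the centered digits
\[
D'_{2,j} \eqdef D_{2,j,K,N}-\tfrac12, \qquad D'_{3,j} \eqdef D_{3,j,N}-1,
\]
so that $\tilde s_2(3^K n)-\tfrac12\tilde N_1=\sum_{j\in J_1} D'_{2,j}$ and $\tilde s_3(n)-\tilde N_2=\sum_{j\in J_2} D'_{3,j}$, where $J_1\eqdef [N_1^\eta,N_1-N_1^\eta]$ and $J_2\eqdef [N_2^\eta,N_2-N_2^\eta]$. Expanding the $d_1$-th and $d_2$-th powers yields
\[
\sum_{\mathbf k\in J_1^{d_1}}\sum_{\boldsymbol \ell \in J_2^{d_2}}
\mathbb E\bigl[D'_{2,k_1}\cdots D'_{2,k_{d_1}}\cdot D'_{3,\ell_1}\cdots D'_{3,\ell_{d_2}}\bigr],
\]
where $\mathbb E$ denotes the average over $n<N$ with $n\equiv i\bmod 2$.

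First I would treat the \emph{generic} tuples, for which all components of $\mathbf k$ are distinct among themselves and all components of $\boldsymbol \ell$ are distinct among themselves. For such a tuple, Lemma~\ref{LeKey} gives
\[
\mathbb E\bigl[D'_{2,k_1}\cdots D'_{3,\ell_{d_2}}\bigr]
=\prod_{\alpha=1}^{d_1}\mathbb E[D'_{2,k_\alpha}]\prod_{\beta=1}^{d_2}\mathbb E[D'_{3,\ell_\beta}]+O\bigl((\log N)^{-\lambda}\bigr),
\]
and by the uniformity statement in that lemma, each of the one-dimensional marginals has mean zero up to $O((\log N)^{-\lambda})$. Thus each generic summand is $O((\log N)^{-\lambda})$, and since the number of generic tuples is $\LandauO((\log N)^{d_1+d_2})$, their total contribution, after division by $\sqrt{\log N}^{\,d_1+d_2}$ and $N/2$, is $O((\log N)^{d_1/2+d_2/2-\lambda})$, which is negligible by choosing $\lambda$ large enough depending on $d_1,d_2$.

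Next I would handle the \emph{coincidence} tuples, where at least one pair of indices in $\mathbf k$ or in $\boldsymbol\ell$ coincides. Grouping the $k_\alpha$ and $\ell_\beta$ according to their set partitions, a block of size $b$ contributes an average of $(D'_{2,k})^{b}$ or $(D'_{3,\ell})^{b}$, which by Lemma~\ref{LeKey} (applied to the single index $k$ or $\ell$ with digit values summed appropriately) tends to the corresponding moment of the uniform distribution on $\{0,1\}$ (respectively on $\{0,1,2\}$), uniformly in the index. For partitions with at least two distinct blocks one can again factorize by Lemma~\ref{LeKey} and obtain the product of these marginal moments. After normalization by $\sqrt{\log N}^{\,d_1+d_2}$, only partitions all of whose blocks have size exactly two survive in the limit, and the counting of such partitions produces exactly $\mu_{d_1}\mu_{d_2}$. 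All other partitions contribute $o(1)$.

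The main obstacle is the careful bookkeeping of the error terms from Lemma~\ref{LeKey} across all $\ll (\log N)^{d_1+d_2}$ tuples while tracking the normalization $\sqrt{\log N}^{\,d_1+d_2}$; this is resolved by choosing the parameter $\lambda$ in Lemma~\ref{LeKey} large enough, which we are free to do since $d_1$ and $d_2$ are fixed. The uniformity in $K$ for $0\le K\le c_2\log N$ is inherited directly from the uniformity statement of Lemma~\ref{LeKey}, and the matching of the limit with $\mu_{d_1}\mu_{d_2}$ follows because the same combinatorial expansion, applied to the truncated iid sums $\tilde S_2(3^K N)$ and $\tilde S_3(N)$, produces exactly the same leading term by~\eqref{eqmomentsrel}.
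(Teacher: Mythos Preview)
Your plan is correct and follows the same core strategy as the paper: expand the powers into $O((\log N)^{d_1+d_2})$ terms, apply Lemma~\ref{LeKey} to each, and choose $\lambda$ large enough (depending on $d_1,d_2$) so that the accumulated error is $o(1)$ after normalization; uniformity in $K$ is inherited from Lemma~\ref{LeKey}.

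The paper's execution is shorter in one respect. After replacing each digit-probability by $2^{-d_1'}3^{-d_2'}+O((\log N)^{-\lambda})$ via Lemma~\ref{LeKey}, it simply observes that the sum of main terms is \emph{identically} the corresponding moment for the independent model $(\tilde S_2(3^KN),\tilde S_3(N))$, giving
\[
\frac{2}{N}\sum_{\substack{n<N\\ n\equiv i\bmod 2}}\Bigl(\tilde s_2(3^Kn)-\tfrac12\tilde N_1\Bigr)^{d_1}\Bigl(\tilde s_3(n)-\tilde N_2\Bigr)^{d_2}
=\mathbb{E}\Bigl[\bigl(\tilde S_2-\tfrac12\tilde N_1\bigr)^{d_1}\bigl(\tilde S_3-\tilde N_2\bigr)^{d_2}\Bigr]+O\bigl((\log N)^{2(d_1+d_2)-\lambda}\bigr),
\]
and then invokes~\eqref{eqmomentsrel} directly. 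This bypasses your separate treatment of generic versus coincidence tuples and the set-partition analysis: that combinatorics is precisely the standard moment-method proof of the CLT for bounded iid sums, which is already packaged in~\eqref{eqmomentsrel}. You do reach this observation in your last sentence, so the detour is correct but redundant.

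One minor point of presentation: your displayed ``factorization'' $\mathbb E[D'_{2,k_1}\cdots D'_{3,\ell_{d_2}}]=\prod_\alpha\mathbb E[D'_{2,k_\alpha}]\prod_\beta\mathbb E[D'_{3,\ell_\beta}]+O((\log N)^{-\lambda})$ is not what Lemma~\ref{LeKey} literally gives. The lemma says the joint \emph{distribution} is $2^{-d_1}3^{-d_2}+O((\log N)^{-\lambda})$; summing against the bounded weights $(b_1-\tfrac12)\cdots(c_{d_2}-1)$ yields $\mathbb E[\,\cdot\,]=0+O((\log N)^{-\lambda})$ directly, since $\sum_b(b-\tfrac12)=\sum_c(c-1)=0$. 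Your conclusion is unaffected.
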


\begin{proof}
We rewrite the sum-of-digits function 
\begin{align*}
\tilde s_2(3^K n) &= \sum_{N_1^\eta \le j\le N_1 - N_1^\eta} \varepsilon_{2,j}(3^K n) 
= \sum_{N_1^\eta \le j\le N_1 - N_1^\eta} D_{2,j, K, N}, \\
\tilde s_3(n) &= \sum_{N_2^\eta \le j\le N_2 - N_2^\eta} \varepsilon_{3,j}(3^K n) 
= \sum_{N_2^\eta \le j\le N_2 - N_2^\eta} D_{3,j, N}
\end{align*}
and expand the moments
\[
\frac 2N 
\sum_{n< N,\, n\equiv i \bmod 2 }
\left( \tilde s_2(3^K n) - \frac 12 \tilde N_1 \right)^{d_1}
\left( \tilde s_3(n) - \tilde N_2 \right)^{d_2} 
\]
in $\tilde N_1^{d_1} \tilde N_2^{d_2} = O((\log N)^{d_1+d_2}))$ terms of 
\[
\frac 2N \# \left\{ n < N,\, n\equiv i \bmod 2 \,:\, \varepsilon_{2,k_{j_1}}(3^K n) = b_{j_1},\,
1\le j_1\le d_1',\, \varepsilon_{3,\ell_{j_2}}(n) = c_{j_2},\, 1\le j_2 \le d_2'  \right\}.
\]
with $0\le d_1' \le d_1$, $0\le d_2' \le d_2$
and powers of $\tilde N_1$ and $\tilde N_2$. By Lemma~\ref{LeKey} we can replace
these numbers by $2^{-d_1'}3^{-d_2'} + O((\log N)^{-\lambda})$
uniformly for $0\le K\le c_2 \log N$, where we choose
$\lambda > 2(d_1+d_2)$. Clearly the resulting sum equals 
\[
\mathbb{E}\left[ \left( \tilde S_2(3^K N) - \frac 12 \tilde N_1 \right)^{d_1}
\left( \tilde S_3(N) - \tilde N_2 \right)^{d_2} \right] 
+ O\left( (\log N)^{2(d_1+d_2)-\lambda} \right).
\]
Finally by dividing the resulting equation by
\[
\left( \frac 14 \log_2(3^K N) \right)^{\frac{d_1}2} 
\left( \frac 23 \log_3( N) \right)^{\frac{d_2}2}
\]
and by using the relation (\ref{eqmomentsrel}) we complete the proof of the lemma.
\end{proof}

Lemma~\ref{Lecomparemoments} directly implies that 
the truncated and normalized pair of random variables
\[
(\tilde X_{1,3^KN}, \tilde X_{2,N})
\]
converges weakly to the $2$-dimensional normal distribution $N({\bf 0}, {\bf I})$. 
Since $\eta < \frac 12$ the same holds for
the untruncated pair
\[
(X_{1,3^KN}, X_{2,N}).
\]
Hence, we also have for the characteristic function
\begin{equation}\label{eqfinalrelation}
\lim_{N\to\infty} \mathbb{E} \left( e^{it_1 X_{1,3^K N} + it_2 X_{2,N} } \right) 
\to e^{-t_1^2/2- t_2^2/2}.
\end{equation}
More precisely by using the Taylor expansion for $e^{it}$ convergence of moments
and in particular the uniformity for $0\le K\le c_2 \log N$ can be directly 
transformed in uniform convergence for the characteristic function. 
Thus, (\ref{eqfinalrelation}) holds uniformly for $0\le K\le c_2 \log N$

By rewriting (\ref{eqfinalrelation}) in terms of the sum-of-digits functions this
is precisely (the first part of) Proposition~\ref{Pro2}. 

As mentioned already several times, the second part of Proposition~\ref{Pro2}
follows very similarly. This completes the proof of Proposition~\ref{Pro2}.

\bibliographystyle{siam}
\bibliography{count_collisions}

\end{document}